\begin{document}
\onehalfspacing

\title{Cutting Plane Approaches for the Robust Kidney Exchange Problem}
\author{Danny Blom\thanks{Corresponding author: d.a.m.p.blom@tue.nl}}
\author{Christopher Hojny}
\author{Bart Smeulders}
\affil{%
    Department of Mathematics and Computer Science, Eindhoven University of Technology,
    Groene Loper 5, 5612 AZ Eindhoven, The Netherlands
}

\date{\today}

\maketitle

% Paper content
\begin{abstract}
\noindent Renal patients which have a willing but incompatible donor can decide to participate in a kidney exchange program (KEP). 
The goal of a KEP is to identify sets of such incompatible pairs that can exchange donors, leading to compatible transplants for each recipient.
There is significant uncertainty involved in this process, as planned transplants may be canceled for a plethora of reasons.
It is therefore crucial to take into account failures while planning exchanges.
In this paper, we consider a robust variant of this problem with recourse studied in the literature that takes into account vertex failures,~\ie~withdrawing donors and/or recipients.
This problem belongs to the class of defender-attacker-defender (DAD) models.
We propose a cutting plane method for solving the attacker-defender subproblem based on two commonly used mixed-integer programming formulations for kidney exchange.
Our results imply a running time improvement of one order of magnitude compared to the state-of-the-art.
Moreover, our cutting plane methods can solve a large number of previously unsolved instances.
Furthermore, we propose a new practical policy for recourse in KEPs and show that the robust optimization problem concerning this policy is tractable for small to mid-size KEPs in practice.\\[1ex]

\noindent{\bf Keywords:} Kidney Exchange, Robust Optimization, Interdiction Models
\end{abstract}

\section{Introduction}\label{sec:introduction}

In the final stage of chronic kidney disease, patients suffer from end-stage renal disease.
The most preferred treatment option for this disease is a kidney transplant.
However, the supply of healthy kidneys from deceased donors does not adequately meet the needs of these patients.
An alternative are living donations, e.g., by a relative or friend who is willing to donate one of their two kidneys to a specific patient.
This is possible since only one healthy kidney is necessary to ensure sufficient kidney function.
Transplants from living donors also offer better long-term outcomes for the recipient compared to deceased donor grafts, with limited risk to the donor, see, e.g., \cite{Davis2005}.
However, to allow for donations, recipients must be medically compatible with the donor.
Incompatibilities can be caused, among other reasons, by conflicting blood or tissue type.
Recipients might thus be unable to receive a transplant from their intended donor.

To overcome incompatibilities between a patient and its related donor,~\citet{Rapaport1986} introduced kidney exchange programs (KEPs), which have been further popularized by a series of seminal papers in the field (see~\cite{Roth2004,Roth2005,Roth2007}).
In a KEP, a set of incompatible patient-donor pairs is given, and every donor is willing to give one of their kidneys to any patient as long as their paired recipient receives a transplant in return from some other donor.
In the simplest case, one is thus looking for two incompatible donor-recipient pairs~$(p_1, p_2)$ such that the kidney of~$p_1$'s donor is compatible with the recipient of~$p_2$ and vice versa.
Once such a match is identified, the patients from~$p_1$ and~$p_2$ can receive the kidney from the other pair's donor.
We call such an exchange of donors a \emph{2-way exchange} as it includes two patient-donor pairs.
Of course, this idea can be generalized to \emph{$k$-way exchanges}, where the exchange of donors occurs in a cyclic way between~$k$ patient-donor pairs.
In many real-life KEPs, an upper bound~$\cyclen$ is used for the size of a cycle, see~\cite{Biro2019}.
As transplants within an exchange are usually performed simultaneously to remove the risk of donors reneging once their paired patient has received a kidney, long exchanges are logistically challenging and thus avoided.

A kidney exchange program can also include \emph{non-directed donors} (NDDs) as studied by~\cite{Morrissey2005} and~\cite{Roth2006}.
These are donors who are willing to donate to any recipient, while not having a paired patient and thus not requiring a return transplant.
An NDD can start a \emph{chain of transplants}, in which the donor of the $i$-th pair in the chain donates a kidney to the recipient of the $(i+1)$-th pair.
The donor of the last pair in the chain can donate their kidney to a recipient on the deceased donor waiting list, or becomes a bridge donor, functioning as a non-directed donor in a future KEP run.
In practice, it is often also assumed that the maximum chain length is bounded by some integer~$\chainlen$.

The basic KEP model can be regarded as a directed graph $G = (V,A)$ whose vertices $v \in V$ correspond to recipient-donor pairs and non-directed donors, and an arc $(i,j) \in A$ corresponds to compatibility between the donor of $i$ and the recipient of $j$.
Notice that each feasible $k$-way exchange corresponds to a cycle in this graph $G$.
Therefore, we refer to $k$-way exchanges as \emph{$k$-cycles}.
A feasible KEP solution is then a set of pairwise vertex-disjoint cycles and chains in~$G$ of lengths respecting the upper bounds.
A common objective in kidney exchange problems is to maximize the number of transplants that can be realized.
Figure~\ref{fig:example_kep} illustrates an example of a kidney exchange program and a feasible KEP solution.

\begin{figure}[htbp]
\centering
\includegraphics[scale = 0.9]{./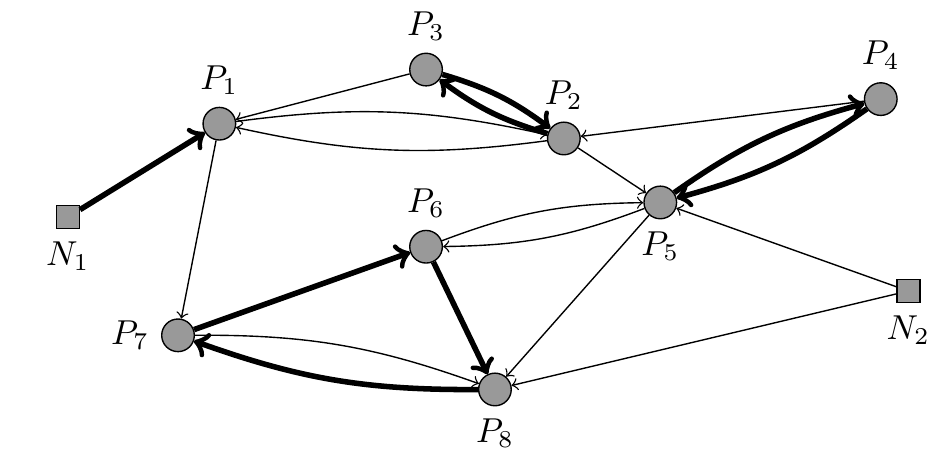}
\caption{Example of a kidney exchange program. Each vertex corresponds to a recipient-donor pair ($P_1,\dots,P_8$) or a non-directed donor ($N_1, N_2$). Arcs represent possible transplants.
  The bold arcs depict a feasible solution, with one chain of length one, two 2-cycles and one 3-cycle, implying eight transplants in total.\label{fig:example_kep}}
\end{figure}

Basic KEP models assume that all identified matches proceed to transplant.
In practice however, this is usually not the case.
For example,~\cite{Dickerson2019} report that \SI{93}{\percent} of proposed matches failed in the initial years of the UNOS program. 
In the NHS Living Kidney Sharing Scheme, \SI{43}{\percent} of identified matches did not proceed to transplant in the period from 2006 until 2021~\citep{NHS2021Report}.

Such failures can occur for a variety of reasons, \eg~when additional medical incompatibilities are identified during the time in between the matching run and the actual surgeries.
Furthermore, recipients and donors may decide to (temporarily) withdraw from KEPs due to health issues.
There exists a growing literature taking into account failures, which we discuss in more detail in Section \ref{subsec:literature_review}. 
We focus in particular on~\citet{Glorie20}, who introduced a robust optimization approach that addresses such donor (vertex) failures and aims to optimize a minimum guarantee for recipients that were selected for transplant in advance of the failure event.
They consider a three-stage model that belongs to the class of defender-attacker-defender (DAD) models, as optimizing worst-case behavior can be interpreted as playing a sequential game against an informed adversary.

In the first stage, the KEP owner (defender) identifies a feasible kidney exchange solution, which we refer to as the \emph{initial solution}.

Subsequently, it is anticipated that up to a specified number $B$ of pairs and / or NDDs might withdraw from the KEP.
In terms of DAD models, this corresponds to an adversary player selecting those pairs and NDDs that disrupt the initial solution as much as possible, therefore we refer to these withdrawals as an \emph{attack pattern} or \emph{attack}. 

In the third and final stage, a new kidney exchange solution is identified using only the remaining pairs and NDDs.
This kidney exchange solution is called the \emph{recourse solution} and it is selected such that it includes the largest number of patients also included in the initial solution. 
Additional restrictions on the set of possible recourse solution are captured through recourse policies.

The objective of the model is to select the initial solution that maximizes the worst-case number, with respect to all attacks, of patients involved in both the initial and recourse solution.
The idea behind this objective is to diminish the risk that recipients selected for transplant in the initial stage will end up without a transplant due to donor withdrawals.
% \BSil{Here we need a short overview of the paper (Section references)}

The paper is structured as follows.
We start by indicating in~\Cref{subsec:our_contributions} our main contributions and review some relevant literature in~\Cref{subsec:literature_review}.
In~\Cref{sec:recourse_policies}, we consider two policies for recourse for the third stage of the defender-attacker-defender model.
The algorithmic framework for solving the attacker-defender subproblem is laid out in~\Cref{sec:algorithmic_framework}.
The implementation of this framework with respect to the cycle-chain and position-indexed chain edge formulations is made more concrete in~\Cref{sec:formulations}.
We finish by presenting our computational efforts in~\Cref{sec:computational_results} and a conclusion in~\Cref{sec:conclusion}.

\subsection{Our contributions}\label{subsec:our_contributions}
This paper will build on the work of \cite{Glorie20}, and the algorithmic literature on (D)AD models. Our main contributions are the following.
\begin{itemize}
    \item We introduce a new policy for recourse in kidney exchange programs, the \emph{Fix Successful Exchanges (FSE)} policy, where cycles and chains of the initial solution that are not affected by the subsequent attack will also be selected for the recourse solution. 
    This recourse policy further protects recipients involved in a selected cycle or chain of the initial solution, and has additional logistical benefits.
    We further motivate the FSE policy in~\Cref{sec:recourse_policies}. 
    Our computational results show that imposing the FSE policy for our recourse options leads to only a small loss in the attainable objective value.
    % \CHil{Bart: Can we already say that this comes at a limited cost in terms of transplants?}
    % \DBil{Well, the recourse problem is strictly more constrained, so indeed it comes at a cost in terms of transplant, but we cannot say already that this loss is ``limited''}
    \item We make use of the position-indexed chain-edge formulation (PICEF) for kidney exchange that is compact with respect to the set of chains.
    We show that the resulting formulation of the second stage is significantly stronger than those of extended formulations such as the cycle-chain formulation. 
    Specifically, we show the strength of interdiction cuts depends on the underlying formulation, provided we consider the form proposed by \cite{Fischetti2019}.
    % \CHil{Ik denk dat Bart met extended formulations de cycle-chain formulatie bedoelt. Misschien kunnen wij zeggen: ``of extended formulations such as the celebrated cycle-chain formulation~\cite{}''.}
    \item We propose a new approach based on cutting planes to solving the second-stage problem,~\ie the interdiction problem faced by the attacker. Our computational results show that this new approach outperforms the branch-and-bound algorithm of~\cite{Glorie20} by an order of magnitude.
    This improvement can be observed both with regards to the average computation times and the number of instances that could be solved within the time limit of one hour.
    
    % \DBil{@Bart, Christopher: check dit item even op consistentie met resultaten}
\end{itemize}

\subsection{Literature review}\label{subsec:literature_review}
In this section, we position our paper in the literature on failure in kidney exchanges and discuss some formulations for the basic kidney exchange problem which we adapt to our setting. Finally, we mention relevant literature on (Defender)-Attacker-Deference models.

Failure in kidney exchange was first considered by~\cite{Dickerson2019}. They study a setting with equal failure probabilities and equal value for each transplant and maximize the expected number of transplants. 
\cite{Bidhkori2020} extend this model to a setting with inhomogeneous arc existence probabilities and gives a compact formulation.
Since stochastic models require accurate estimations for arc existence probabilities, an alternative model based on robust optimization is proposed in~\cite{McElfresh2019}. 
In addition to handling the possibility of failures a priori, an additional decision can be made once the failures are observed, called the \emph{recourse decision}.
In a recourse decision, the initially proposed solution can be adapted subject to some constraints, which might allow for a significant percentage of additional transplants to be realized, see, e.g.,~\cite{Bray15}.
\cite{Alvelos2019} and \cite{Klimentova16} consider recourse in a stochastic setting. They initially select subsets of recipient-donor pairs. After failures are revealed, transplant cycles are selected by only making use of transplants between pairs within the same initial subset. Initial selections are made so as to maximize the expected number of transplants. \cite{Smeulders2022} studies a setting where initially a set of potential transplants, limited by a budget constraint, are tested. Again, the goal is to select these arcs so as to maximize the expected number of transplants after failure is revealed.
\cite{Glorie20} consider recourse in a robust setting, as described in the introduction.

Most algorithms related to kidney exchange programs are based on integer programming (IP).
An intuitive IP formulation is the \emph{cycle-chain formulation (CC)} due to~\cite{Roth2007}, which uses a binary variable for each cycle and each chain, and constraints for each pair and each NDD $j$, stating that only one cycle or chain can be chosen that includes $j$.
The number of variables in these formulations grow rapidly as the cycle and chain limits increas, and with increasing numbers of participating pairs and NDDs increases.
A common technique for solving the cycle-chain model is branch-and-price, see~\cite{Glorie2014} and~\cite{plaut2016fast}.
Another fundamental formulation is the \emph{edge formulation} due to~\cite{Abraham07}.
Instead of cycle and chain variables, this formulation uses binary variables $y_{ij}$ for each $(i,j) \in A$, that indicate whether the donor of $i$ donates a kidney to the recipient of $j$, leading to a polynomial number of variables.
However, the number of constraints is exponential in the number of pairs and NDDs.
\cite{Constantino13} introduced the first \emph{compact formulations}, i.e.,
models with a polynomial number of variables and constraints.
The drawback of this formulation is that its LP relaxation is generally weaker than the one of the CC formulation.
\cite{Dickerson2016} introduced the \emph{position-indexed cycle edge formulation (PICEF)}, which is a hybrid variant of different models:
as the CC formulation, it uses binary variables for each cycle.
But instead of using variables for each chain, chains are modeled by variables that express the position of an individual transplant within a chain. The resulting formulation is thus compact concerning the chains.
\citet{Dickerson2016} show that the LP relaxation of PICEF is strictly weaker than that of CC. However, we will use the PICEF formulation and show its advantages in the robust setting we consider.

As we study a worst-case setting, following \cite{Glorie20}, the failures can be modeled as deliberate attacks of an optimizing adversary.
\citet{Riascos2022} have studied a generalization of this model based on inhomogeneous failures of both vertices and arcs.
This makes the model a specific example of a general class of optimization models known as the class of \emph{defender-attacker-defender} (DAD) models.
Problems involving network interdiction are often intuitively modeled by a DAD model, many of which are surveyed in~\cite{Smith2020}.
The second and third stages of the model we study can separately be seen as an attacker-defender model, and specifically an interdiction game with monotonicity as studied,~\eg~by \cite{Fischetti2019}. 
We will indeed model these stages as an interdiction game and make use of their general framework.

%%% Local Variables: 
%%% mode: latex
%%% TeX-master: "ManuscriptMay2022.tex"
%%% End: 

\section{Recourse Policies}\label{sec:recourse_policies}
In this section, we consider two policies for recourse in kidney exchange programs: the \emph{Full Recourse (FR)} policy proposed by~\citet{Glorie20} and the novel \emph{Fix Successful Exchanges (FSE)} policy.

\subsection{Full recourse}\label{subsec:full_recourse}
In the Full Recourse (FR) policy, no additional restriction is imposed on the set of recourse solutions.
As a result, any feasible kidney exchange solution that does not involve withdrawing donors is also a feasible recourse solution.
One clear advantage of FR is that it provides the highest degree of flexibility in selecting recourse solutions.
% Moreover, we will\CH{Check: Do we reflect upon this later?} see that recourse solutions obeying the FR policy lead to the highest guaranteed number of transplants with respect to the DAD model of~\citet{Glorie20}.
% Nevertheless, the flexibility provided by FR usually comes with an increased computational cost compared to different recourse policies. \BS{Is that so? Is FR easier than FS?}
Nevertheless, the full recourse policy might leave individual recipients dissatisfied.
Namely, recipients that are selected for a (successful) exchange with respect to our initial solution are not guaranteed to receive a transplant according to the recourse solution.
Moreover, FR requires logistical flexibility by transplant centers organizing a kidney exchange.
In many cases, an exchange means that kidneys are transported from one facility to be delivered to the facility where the recipient is treated.
This freedom of selecting any possible kidney exchange with respect to the Full Recourse setting might cause issues as some donor kidneys are located far away from the intended destination,~\ie~the recipient's facility.

\subsection{Fix Successful Exchanges}\label{subsec:fix_successful_exchanges}
With these issues in mind, we now introduce a recourse policy that addresses these drawbacks and aims at minimizing such disadvantages.
We propose the \emph{Fix Successful Exchanges (FSE)} policy, which imposes additional constraints on the set of feasible recourse solutions.

Under the FSE policy, any cycle that is both (i) included in the initial solution and (ii) not affected by any donor withdrawals, is required to be included in the recourse solution.
Chains are (partially) enforced up to and including the final pair in the chain before the first pair with a withdrawing donor.
In the recourse step, we do not allow the extension of the enforced (partial) chains with alternative transplants.
In practice, the matching algorithms for kidney exchange are run frequently and by allowing the extension of enforced chains, we restrict the possible exchanges for the next matching run.
In other words, we fix the successful (parts of) initial exchanges to be included in the recourse solution.
Furthermore, any cycle and chain involving the remaining pairs and NDDs could be added to the recourse solution.
% ~\\
% \\
% Given an initial solution and an attack observed in stage 1 and 2 respectively, the FSE policy will first consider each cycle and chain of the initial solution and determine if it is affected by the attack.
% In the case that the cycle or chain is not affected by the attack, the induced transplants will proceed as planned, \ie~we fix the successful cycles and chains.
% Finally, for the remaining pairs and non-directed donors in the KEP, a kidney exchange solution is computed in a similar fashion compared to the full recourse policy.
% In other words, the recourse solution obtained by the FSE policy consists of cycles and chains proposed by the initial solution and a full recourse solution for the remaining pairs and NDDs.\\
% ~\\

This recourse policy has significant advantages from an organizational and logistical point of view.
Cycles and chains without failures can immediately proceed to transplant, thus reducing the waiting time for the involved recipients.
Under full recourse, the recipients involved in such successful exchanges are not guaranteed to receive a transplant, and pairs might get reassigned to new exchanges.
The FSE policy thus reduces uncertainty for recipients and transplant centers alike.

%Multi-agent kidney exchange typically faces logistical challenges in practice, as each exchange implies multiple operations to be conducted simultaneously at distinct transplant centers. As cross-match tests are conducted in a small time frame before the actual surgeries, the full recourse policies might expect a large number of recipients to travel last minute to another transplant facility. The FSE policy diminishes this burden, as recipients will stay at the same transplant center whenever their corresponding initial exchange can be executed in practice.

%This is, however, not the only logistical advantage of the FSE policy. Notice that that initial exchanges can be processed sequentially since it is first determined for each exchange whether or not it would be successful. This property allows us to conduct the transplants sequentially, leading to a smaller number of operating rooms that need to be available at the same time. Remark that sequential processing is not possible for the case of full recourse, as an individual exchange might be successful in practice, but needs to be broken down to allow for a recourse solution that benefits more recipients. 

The drawback of the FSE policy is that the set of feasible recourse solutions is restricted.
Consequently, it might occur that in the final recourse solution concerning FSE, the number of transplanted recipients is strictly smaller than it would be the case with FR.
Nevertheless, we believe that the negative impact of deploying the FSE policy will be limited.
Our computational results in~\Cref{sec:computational_results} show that for most instances, the attainable objective value for the FSE setting either coincides or is just one less compared to the objective value in the Full Recourse setting.
% \CHil{Can we provide a future pointer regarding the marginal effect \`{a} la ``which will be confirmed by our computational experiments in Section~X''?}
% 
% \DBil{I do not think we can easily extract the information necessary for a helpful comparison of these numbers. Furthermore, the numbers of instances solved to optimality are not very good in the FSE setting, both for cut-CC and cut-PICEF}.
The FSE policy provides a solution concept that both benefits the donor and recipient experience while reducing the complexity of the logistics involved in long-distance kidney exchanges.
%%% Local Variables:
%%% mode: latex
%%% TeX-master: "ManuscriptMay2022.tex"
%%% End:

\section{Algorithmic Framework}\label{sec:algorithmic_framework}

In this section, we reconsider the defender-attacker-defender model for robust kidney exchange proposed by~\citet{Glorie20} and algorithmic concepts used for solving this model.
We start by providing some preliminary knowledge on modelling basic kidney exchange programs.
Secondly, we describe the DAD model in a general sense, and concretize it for both recourse policies discussed in Section~\ref{sec:recourse_policies}.
Finally, we describe a cutting plane approach for solving the adversary problem.

\subsection{Preliminaries}\label{subsec:preliminaries}
The basic KEP model considers a \emph{compatibility graph} $G  = (V,A)$.
The vertex set $V$ consists of a set~$P$ of incompatible donor-recipient pairs and a set $N$ of non-directed donors, \ie~$V = P \cup N$.
Each arc $(i, j) \in A$ indicates that the donor of vertex $i \in P \cup N$ is compatible with the recipient of vertex $j \in P$.
As each arc corresponds to transplanting a donor kidney to a recipient, we will also refer to arcs as \emph{transplants}.
We denote by $\C_{\cyclen}$ the set of (directed) \emph{cycles} in $G$ with at most~$K$ arcs.
A directed path originating from some non-directed donor $n \in N$ is called a (directed)
\emph{chain}, where the length of the chain is given by the number of arcs
therein.
The set of all chains in~$G$ of length at most~$\chainlen$ is denoted by~$\D_{\chainlen}$.
We refer to $\E_{\cyclen, \chainlen} = \C_{\cyclen} \cup \D_{\chainlen}$ as the set of feasible \emph{exchanges}, or simply $\E$ if $K$ and $L$ are clear from the context.

Given an exchange $e \in \E$, we denote by $V(e)$ and $A(e)$ the set of vertices and arcs involved in $e$.
We denote by $\E^j$ the set of exchanges involving $j$, as well as $\C_{\cyclen}^j$ and $\D_{\chainlen}^j$ for cycles and chains involving $j$, respectively.
Furthermore, we denote by $\F$ the set of feasible kidney exchange solutions on $G$, \ie
\[
    \F \define \set{ X \subseteq \E \given V(e) \cap V(e') = \emptyset \text{ for all } e, e' \in X,\ e \ne e'}.
\]
We denote by $\F^{U}$ the subset of feasible kidney exchange solutions restricted to the subgraph $G[U]$ induced by the vertex subset $U \subset V$,~\ie
\[ 
    \F^{U} \define \F \cap \set{ X \subseteq \E \given V(e) \subseteq U \text{ for all } e \in X }.
\]
%For a vector $\vecx = (x_1,\ldots, x_m) \in \R^m_{\ge 0}$, its \emph{support} is $\supp(\vecx) = \set{i \in [m] \given x_i > 0}$, \ie~the set of vector indices for which $\vecx$ has a strictly positive entry.

\subsection{Robust kidney exchange model}\label{subsec:robust_kidney_exchange_model}
The model introduced by~\citet{Glorie20} considers a variant of kidney exchange programs, in which donor withdrawal is taken into account in a worst-case setting.
This problem can be interpreted as a two-player game,
in which the KEP owner (defender) plays against an optimizing adversary (attacker).
In this setting, the attacker is not considered to be an actual decision maker, but rather a concept for modelling the worst-case realization of donor withdrawals.
Given an initial solution proposed by the defender and an attack proposed by the adversary, the defender's goal is to identify a feasible recourse solution for which the maximum number of patients that were also involved in some exchange of the initial solution receive a transplant.
In other words, this model belongs to the class of defender-attacker-defender models, since first the KEP owner makes a ``here-and-now'' decision (selects an initial solution), then the adversary reacts (selects an attack pattern) and the KEP owner finally adapts their solution based on the attack it has just observed (selects a recourse or ``wait-and-see'' decision).
% \CH{The grammar is a bit weird: adapts \underline{their} solution based on the attack \underline{it} has observered. Shouldn't it be either ``their solution'' or ``they have''? Same in the next sentence.}
The high-level aim of the defender is to coordinate their initial solution such that the objective value of the recourse problem is maximized in the case of the worst-case attacker's decision.
Consequently, solving the underlying robust optimization problem for the defender consists of solving a trilevel optimization problem.
In the following, we provide the details of this trilevel problem.
% While simply optimizing transplants after recourse may appear more natural, optimal solutions to this single-level problem would likely involve cancelling the transplants for patients in an initial exchange that is unaffected by the realized attack.
% \citet{Glorie20} argue that this is not desirable. \BS{Been there done that. Both in the introduction and the recourse section we have already discussed the desire to have overlap/ the chosen objective function.}
% Therefore, they proposed the aforementioned alternative objective to maximize the number of exchanges occurring in both the initial and recourse solution.

Let us denote by $\X$, $\U$, and $\Y_{\Pi}$ respectively the feasible sets of each of the three subsequent decisions to be made in the trilevel problem. $\Pi \in \set{\fr, \fse}$ captures
the recourse policy under consideration.

Note that the set of recourse solutions depends on preceding decisions, as both the initial solution and the observed second-stage attack restrict which exchanges should or cannot be selected in the recourse solution.
Therefore, we write $\Y_{\Pi}(\vecx,\vecu)$ to denote feasible recourse solution given an initial solution $\vecx \in \X$ and attack $\vecu \in \U$.

% Since previous decisions have an impact on the feasible region of subsequent decisions, we will write $\U(x)$ instead of $\U$ if the defender first selects $x \in \X$, and $\Y_{\Pi}(x,u)$ instead of $\Y_{\Pi}$ if the first two decisions were $x \in \X$ and $u \in \U(x)$ respectively. \BSil{Instead of $\U$ if ..., implies we will sometimes use $\U$. Do we?}
Using the above notation, the robust optimization problem for a fixed recourse policy~$\Pi \in \set{\fr, \fse}$ can be modeled as a defender-attacker-defender model
\begin{equation}\label{eq:DAD_model_general_form}
    \max_{\vecx \in \X} \min_{\vecu \in \U} \max_{\vecy \in \Y_{\Pi}(\vecx,\vecu)} f(\vecx, \vecu, \vecy),
\end{equation}
where $f\colon \{ (\vecx, \vecu, \vecy) \in \X \times \U \times \Y_{\Pi} \mid \vecy \in \Y_\Pi(\vecx, \vecu)\} \to \Z_{\ge 0}$ is an objective function dependent on the recourse policy~$\Pi$.
In the remainder of this section, we specify the exact structure of the sets~$\X$, $\U$, and~$\Y_\Pi$ as well as the objective function~$f$.

\paragraph{Initial solutions}~\\
In the first stage---the \emph{initial stage}---no additional constraints are imposed on initial solutions. The set $\X$ of feasible initial solutions is defined as

\begin{equation}\label{eq:feasible_region_stage_1}
    \X \define \set{ \vecx \in \set{0,1}^{m_1} \given \vecx \text{ encodes a feasible initial solution } X \in \F }.
\end{equation}

Here, we use $m_1$ to denote the length of the encoding.
Typically, this depends on the choice of the mathematical programming formulation at hand.
Below, we will briefly discuss different formulations and provide their details in Section~\ref{sec:formulations}.
With a slight abuse of notation, we will refer to both the encoding $\vecx \in \X$ and its associated set of exchanges $X \in \F$ as \emph{initial solutions}.
Furthermore, we denote by $P(\vecx) \subseteq P$ the set of pairs---specifically, the set of recipients---involved in an exchange corresponding to the initial solution $\vecx \in \X$.

% \begin{equation}\label{eq:feasible_region_stage_1}
%     \X \define \F\set{ x \given x \text{ encodes a feas\in \bins^\I\,\given\, x = \chi^X \text{ for some } X \in \F(G) },
% \end{equation} \BSil{I think we are going too heavy on notation here. All we need is that there are ways of encoding $X \in \F(G)$ in a vector. With some abuse of notation we can just say (in section 4) that $x$ is a vector encoding $X$, and how $x$ looks exactly should be clear from context. }
% where $\I$ is a set of indices of a vector encoding a feasible solution from~$\F(G)$, and $\chi^X$ is a characteristic vector in $|\I|$-dimensional space.
One intuitive option to encode a solution $X \in \F$ is to consider a binary vector $\vecx = (x_e)_{e \in \E}$ having an entry for each possible exchange $e \in \E$ such that $x_e = 1$ if $e \in X$, and $x_e = 0$ otherwise.
The classical mixed-integer programming (MIP) formulation for kidney exchange programs using this solution encoding is called the \emph{cycle(-chain) formulation},~\eg~\citet{Roth2007} and~\citet{manlove12},~as this formulation includes a binary variable for each possible cycle and chain.

Alternatively, kidney exchange solutions can be encoded by binary vectors with indices given by the arcs of the underlying compatibility graph $G$.
In its most basic form, such an encoding $\vecx = (x_{ij})_{(i,j)\in A}$ of a feasible KEP solution $X \in \F$ is given by $x_{ij} = 1$ if the recipient of vertex $j \in P$ receives a kidney donation from the donor of vertex $i \in P \cup N$ with respect to $X$, and $x_{ij} = 0$ otherwise.
Commonly used MIP formulations for kidney exchange using this type of solution encodings are called \emph{edge formulations}.
Examples of edge formulations are the path-based formulation given by~\citet{Abraham07}, and the position-indexed edge formulations of~\citet{Dickerson2016} which also consider the position of an arc within a chain.

\paragraph{Attacks}~\\
For the second stage---the \emph{attacker's stage}---we consider an attacker's budget $B \in \mathbb{Z}_{>0}$ on the number of pairs and NDDs that can be ``attacked'' (corresponding to donor withdrawals).
The set $\U$ of feasible attacks is given by

\begin{equation}\label{eq:feasible_region_stage_2}
    \U \define \set{ \vecu \in \bins^{V} \given \sum_{j \in V} u_j \le B},
\end{equation}
\ie~each attack $\vecu \in \U$ corresponds to a subset of at most $B$ donors that withdraw from the program.
Given this definition of $\U$, for any vertex $j \in P \cup N$, we consider $j$ to be \emph{attacked} if $u_j = 1$ and \emph{not attacked} if $u_j = 0$.
We denote by $V(\vecx, \vecu)$ the set of attacked vertices given initial solution $\vecx \in \X$ and attack $\vecu \in \U$,~\ie~$j \in V(\vecx, \vecu)$ if and only if $u_j = 1$.
Although $V(\vecx, \vecu)$ does not explicitly depend on the initial solution $\vecx$, we use this notation with two arguments to emphasize that attacks correspond to second-stage decisions.

\paragraph{Recourse solutions}~\\
In the final stage of the model---the \emph{recourse stage}---the defender is given the opportunity to adapt the initial solution based on the observed attacked donors.
Given encodings $\vecx \in \X$ and $\vecu \in \U$, corresponding to an initial solution $X \in \F$ and subset $U = V(\vecx, \vecu) \subset V$ of attacked vertices, let $\F_{\Pi}(X, U) \subset \F^{V \setminus U}$ denote the subset of feasible recourse solutions on the induced subgraph $G[V \setminus U]$ respecting policy $\Pi \in \set{\fr, \fse}$.
Hence, we can define
\begin{equation}
    \Y_{\Pi}(\vecx, \vecu) \define \set{ \vecy \in \bins^{m_1} \given \vecy \text{ encodes a feasible recourse solution $Y \in \F_{\Pi}(X,U)$ } }.
\end{equation}
We will use the notation $P(\vecx, \vecu, \vecy) \subseteq P$ to denote the set of pairs---specifically, the set of recipients---involved in an exchange of the recourse solution $\vecy \in \Y_{\Pi}(\vecx, \vecu)$.
We now formalize the notions of $\F_{\Pi}(X,U)$ for $\Pi = \fr$ and $\Pi =\fse$ respectively as
\begin{itemize}
    \item Full Recourse:
    \[ \F_{\fr}(X,U) \define \F^{V\setminus U},\]
    \ie~the set of feasible recourse solutions respecting the full recourse policy does not depend on the initial solution $X \in \F$, but is only restricted by the set $U$ of attacked vertices.
    \item Fix Successful Exchanges:
    \[ \F_{\fse}(X,U) \define \set{Y \in \F \given (e \in X \wedge V(e) \subseteq V \setminus U) \Rightarrow e \in Y}.\]
\end{itemize}
Finally, remark that we can now write the objective function introduced by~\citet{Glorie20} as
\[
    f(\vecx, \vecu, \vecy) \define |P(\vecx) \cap P(\vecx, \vecu, \vecy)|,
\] 
\ie~the objective function is the number of patients that are both (i) involved in an exchange of the initial solution $\vecx \in \X$ and (ii) involved in an exchange of the recourse solution $y \in \Y_{\Pi}(\vecx, \vecu)$ after observing attack $\vecu \in \U$.
In the next section, we will formalize how the feasible sets of all three stages of the model could be captured in two (mixed-)integer programming formulations commonly used for modeling kidney exchange problems.

\subsection{Stage 1: column-and-constraint generation (C\&CG) algorithm}\label{subsec:column_and_constraint_generation_algorithm}
One approach for solving such defender-attacker-defender models is through first reformulating the model as a single-level optimization problem.
The robust kidney exchange problem with recourse---given recourse policy $\Pi$---can then be described in general as

\begin{maxi!}
    {\vecx, \vecy^{\U} = (\vecy^{\vecu})_{\vecu\in \U}, Z}{ Z\label{obj:robust_kep_general} }{\label{eq:robust_kep_general}}{z_{\Pi}(\U)\define}
    \addConstraint{Z}{\le |P(\vecx) \cap P(\vecx, \vecu, \vecy^{\vecu})|,\label{con:robust_kep_general:minimize_over_attacks}}{\qquad \vecu \in \U}
    \addConstraint{\vecy^{\vecu}}{\in \Y_{\Pi}(\vecx, \vecu),\label{con:robust_kep_general:y_feasible}}{\qquad \vecu \in \U}
    \addConstraint{\vecx}{\in \X\label{con:robust_kep_general:x_feasible}}{}
    \addConstraint{Z}{\ge 0.\label{con:robust_kep_general:Z_feasible}}{}
\end{maxi!}
We show now that $z_{\Pi}(\U)$ provides a valid formulation for the trilevel optimization problem.
Objective~\eqref{obj:robust_kep_general} introduces a dummy variable $Z$ that will be used to capture the worst-case behaviour.
Constraints~\eqref{con:robust_kep_general:y_feasible} and~\eqref{con:robust_kep_general:x_feasible} ensure that $\vecx$ and $\vecy^{\vecu}$ are feasible initial and recourse solution under attack $\vecu \in \U$, respectively.
It then follows from Constraints~\eqref{con:robust_kep_general:minimize_over_attacks} that the value of $Z$ in an optimal solution will be equal to 
\[ Z \define \min_{\vecu \in \U} |P(\vecx) \cap P(\vecx, \vecu, \vecy^{\vecu})|, \]
where $\vecx \in \X$ is an optimal initial solution.

The advantage of transforming a trilevel optimization model into a single-level optimization model is that it allows for employing techniques known from literature, such as algorithms for solving mixed-integer programs (MIPs).
We will show in Section~\ref{sec:formulations} that for common encodings of kidney exchange solutions, this model can be written as a mixed-integer linear program.
Nevertheless, the linearization of a multi-level optimization model comes at the cost of creating extremely large MIPs.
As each attack~$\vecu \in \U$ appears as a variable index and constraint index in Formulation~\eqref{eq:robust_kep_general}, both the number of variables and the number of constraints are extremely large.
Today's MIP solvers are not able to deal with such large formulations, hence we cannot simply provide this full description to a MIP solver.
Nevertheless, the structure of this MIP is such that the constraint matrix consists of blocks per attack $\vecu \in \U$ that are coupled by means of the column corresponding to $Z$.

One commonly used approach for solving MIP models of this structure is the \emph{column-and-constraint generation algorithm (C\&CG)}, as described below.
We refer the reader to~\citet{Zeng2013} for a description of the column-and-constraint generation algorithm in general two-stage robust optimization models.
In our specific setting, the C\&CG algorithm works as follows:
\begin{enumerate}
\item Let $\bar{\U} \subset \U$ be a small subset of feasible attacks.\label{ccg:step1}
\item Solve the restricted problem $z_\Pi(\bar{\U})$. Let $(\bar{\vecx}, \bar{\vecy}^{\bar{\U}}, \bar{Z})$ be an optimal solution.\label{ccg:step2}
\item Solve the bilevel subproblem given optimal initial solution $\bar{\vecx} \in \X$:
  \begin{equation}\label{eq:bilevel_subproblem}
    s_\Pi(\bar{\vecx}) \define \min_{\vecu \in \U} \max_{\vecy^{\vecu} \in \Y_{\Pi}(\bar{\vecx},\vecu)} |P(\bar{\vecx}) \cap P(\bar{\vecx}, \vecu, \vecy^{\vecu})|.
  \end{equation}
  Let $(\bar{\vecu}, \bar{\vecy}^{\bar{\vecu}})$ be an optimal solution to $s_{\Pi}(\bar{\vecx})$. \label{ccg:step3}
\item If $s_\Pi(\bar{\vecx}) < \bar{Z}$, let $\bar{\U} = \bar{\U} \cup \set{\bar{\vecu}}$ and go to Step~\ref{ccg:step2}. Otherwise, return $\bar{Z}$ as then $z_{\Pi}(\bar{\U}) = z_{\Pi}(\U)$. \label{ccg:step4}
\end{enumerate}
From a high-level perspective, one solves the problem while only accounting for a restricted set $\bar{\U}$ of feasible attacks. The initial solution $\bar{\vecx}$ is then tested against other attacks to see if there exists an attack~${\bar{\vecu} \in \U \setminus \bar{\U}}$ that is not yet considered, such that the optimal recourse solution $\bar{\vecy}^{\bar{\vecu}}$ is such that it violates the current solution of the restricted problem, i.e.,
\[ |P(\bar{\vecx}) \cap P(\bar{\vecx}, \bar{\vecu}, \bar{\vecy}^{\bar{\vecu}})| < \bar{Z}.\]
If this is true, it is clear that the current set of attacks $\bar{\U}$ is not enough to describe worst-case behaviour and we will need to add $\bar{\vecu}$ to it.
As $|\U|$ is finite, this algorithm will eventually terminate such that the problem is solved to optimality.

\subsection{Stage 2: cutting plane algorithm}\label{subsec:algorithmic_framework:stage2}
\citet{Glorie20} proposed a branch-and-bound type algorithm for solving $s_\Pi(\vecx)$, where branching is performed on the $\vecu$-variables representing attacks.
In a nutshell, this gives rise to a sequence of subproblems, in which some vertices are fixed to be attacked ($u_j=1$) or not attacked ($u_j = 0$). %\CH{above, we called the vertices~$v$ instead of~$j$}
For each subproblem, a lower bound is computed by considering the initial solution $\vecx$ and the variable fixings, and extending the attack in a greedy fashion as if no recourse options exist.
Aside from that, the best recourse solution $\vecy \in \Y_{\Pi}(\vecx, \vecu)$ with respect to this greedily extended attack $\vecu$ and policy $\Pi$ is computed.
As $u$ is not necessarily optimal to $s_{\Pi}(\vecx)$, this gives us an upper bound to the optimal value.
The column-and-constraint generation method, using this branch-and-bound approach for generating columns and constraints, was tested in the computational experiments of~\citet{Glorie20}.
It was shown that for realistic instances with 50 incompatible donor-recipient pairs and attack budget $B \in \{1,2,3,4\}$, this method was able to solve the entire defender-attacker-defender model within the time limit of one hour, but struggled with instances of 100 pairs due to large sizes of the branch-and-bound trees involved.

In this paper, we propose an alternative algorithm based on cutting planes for solving the bilevel subproblem $s_{\Pi}(\bar{\vecx})$ with the aim to provide a method that is able to solve larger instances.
This is done through linearizing the model by first projecting out the variables of the lower-level problem, namely the variables indicating the recourse solution $\vecy^{\vecu} \in \Y_{\Pi}(\vecx,\vecu)$.
Additional constraints are imposed to ensure that the restrictions of the recourse policy are properly addressed.

We propose a reformulation of $s_\Pi(\vecx)$ based on \emph{optimal value functions}, see~\eg~\citet{Dempe2015}.
The optimal value function of this formulation can be written as

\begin{equation}\label{eq:subproblem_general:value_function}
    \phi_\Pi(\vecx, \vecu) \define \max_{\vecy^{\vecu} \in \Y_{\Pi}(\vecx, \vecu)} |P(\vecx) \cap P(\vecx, \vecu, \vecy^{\vecu})|.
\end{equation}
The issue with this optimal value function is that the feasible set $\Y_{\Pi}(\vecx, \vecu)$ depends on the attack $\vecu$, which means that the feasible set depends on a preceding decision of the attacker. 
In order to be able to reformulate the problem as a single-level mixed integer program, we first need to formalize an optimal value function for which the feasible set does not depend on preceding decisions.
For each initial solution $\vecx$ and attack $\vecu$, there exists a vector $\vecz \in \bins^{m_1}$ that satisfies the following property;
for any feasible recourse solution $\vecy^{\vecu} \in \Y_{\Pi}(\vecx, \vecu)$, there exists a kidney exchange solution $\hat{\vecx} \in \X$ such that $\vecy^{\vecu} = \vecz \odot \hat{\vecx}$, where $\odot$ denotes the entry-wise product of two vectors.
Informally speaking, the entries of $\vecz$ that are equal to zero correspond to a forbidden part of a feasible recourse solution,~\eg~an entry related to an arc containing an attacked vertex.
% \CH{For this to be true, we need that~$\Y_\Pi$ only contains maximal solutions.}
In other words, each optimal recourse solution is obtained from some maximal KEP solution after removing the exchanges with at least one vertex attacked by $\vecu$.
This is true as the kidney exchange polytope $\conv(\X)$ has the \emph{monotonicity} property,~\ie~for any feasible $\vecx \in \conv(\X)$ and any $\vecx'$ with $0 \le \vecx' \le \vecx$, we have that also $\vecx' \in \conv(\X)$.
A more general description of this type of reformulation for bilevel optimization problems where the underlying polytope has the monotonicity property can be found in~\cite{Fischetti2019}.

Furthermore, we need to consider additional restrictions on $\vecz$ to ensure that the recourse policy $\Pi$ is respected.
For the recourse policies $\Pi$ we consider, this can be captured through a system of linear inequalities $C_\Pi \vecz \ge d_\Pi(\vecx, \vecu)$.
The optimal value function can thus be rewritten as 

\begin{equation}\label{eq:subproblem_general:value_function:rewrite}
    \phi_{\Pi}(\vecx, \vecu) \define \max_{\vecz \in \bins^{m_1}} \max_{\hat{\vecx} \in \X} |P(\vecx) \cap P(\vecx, \vecu, \vecz \odot \hat{\vecx})|.
\end{equation}

In that way, the bilevel optimization problem $s_{\Pi}(\vecx)$ can be reformulated as a single-level optimization problem, as the feasible sets do not depend on preceding decisions anymore.
The inner maximization problem can then be reformulated by adding constraints for each feasible KEP solution $\hat{\vecx} \in \X$.
As a result, we get the following single-level reformulation for $s_{\Pi}(\vecx)$.

\begin{mini!}
    {\vecz, \vecu, Z}{ Z }{\label{obj:subproblem_general}}{s_{\Pi}(\vecx; \X)\define}
    \addConstraint{Z}{\ge |P(\vecx) \cap P(\vecx, \vecu, \vecz \odot \hat{\vecx})|,\label{con:subproblem_general:interdiction_cuts}}{\qquad \hat{\vecx} \in \X}
    \addConstraint{C_{\Pi} \vecz}{\ge d_{\Pi}(\vecx, \vecu)\label{con:subproblem_general:forcing_cuts}}{}
    \addConstraint{\vecu}{\in \U\label{con:subproblem_general:u_feasible}}{}
    \addConstraint{\vecz}{\ge 0\label{con:subproblem_general:z_feasible}}{}
    \addConstraint{Z}{\in \R.\label{con:subproblem_general:Z_feasible}}{}
\end{mini!}

One drawback of this reformulation is that projecting out the lower-level variables $\vecy \in \Y_{\Pi}(\vecx, \vecu)$ comes at the expense of an exponential number of constraints of type~\eqref{con:subproblem_general:interdiction_cuts}.
Following \citet{Fischetti2019}, who introduced this type of  constraints, we refer to these as \emph{interdiction cuts}, as the $\vecz$-variables can interdict exchanges that cannot occur in a recourse solution based on the restrictions of $\Y_{\Pi}(\vecx, \vecu)$ determined by initial solution $\vecx$, attack $\vecu$, and recourse policy $\Pi$. 

As kidney exchange instances consist of rather sparse graphs, the size $m_1$ of solution encodings is typically relatively small compared to the number of constraints, supporting the use of a cutting plane method to solve this problem.
% \BS{Not sure I follow the point of this sentence.}
We start by considering the restricted problem $s_\Pi(\vecx; \bar{\X})$ based on initial solution $\vecx$ and a small subset $\bar{\X} \subset \X$ of feasible kidney exchange solutions.
We can describe the algorithm as follows: %\CH{active tense?}
\begin{enumerate}
	\item Solve the problem $s_\Pi(\vecx; \bar{\X})$,~\ie~with Constraints~\eqref{con:subproblem_general:interdiction_cuts} restricted to $\hat{\vecx} \in \bar{\X}$, with optimal solution~$(\bar{\vecz}, \bar{\vecu}, \bar{Z})$.\label{cpa:step1}
	\item Solve the separation problem
		\begin{equation}\label{eq:subproblem_general:separation_problem}
            R_{\Pi}(\vecx, \bar{\vecu}) \define \max_{\vecy \in \Y_{\Pi}(\vecx, \bar{\vecu})} |P(\vecx) \cap P(\vecx, \bar{\vecu}, \vecy)| \left( = \max_{\vecy \in \Y_{\Pi}(\vecx, \bar{\vecu})} w(\vecx, \vecu, \vecy) \right).
		\end{equation}
		This is just a weighted kidney exchange problem on the induced subgraph $G[V \setminus V(\vecx, \bar{\vecu})]$.
		Additional constraints might be imposed---based on recourse policy $\Pi$---that fix variables to zero and one, thus simplifying the problem. 
% 		\BS{Can they have any other effect? I do not think so?}
		The efficiency of the cutting plane algorithm is based on the empirical observation that kidney exchange problems are easy in practice although NP-hard in theory.
		\label{cpa:step2}
	\item If $R_{\Pi}(\vecx, \bar{\vecu}) < z_{\Pi}(\bar{\U})$, that means $\bar{\vecu}$ needs to be added to $\bar{\U}$.
	In other words, $\bar{\vecu}$ corresponds to both columns and constraints that need to be added to the restricted master problem.
	Otherwise, let~$\bar{\vecy} \in \argmax R_{\Pi}(\vecx, \bar{\vecu})$.
	If $R_\Pi(\vecx, \bar{\vecu}) > \bar{Z}$, let $\bar{\X} = \bar{\X} \cup \set{\vecy}$ (we add an interdiction cut to the model) and go to Step~\ref{cpa:step1}. 
	Otherwise, it holds that $s_\Pi(\vecx; \bar{\X}) = R_\Pi(\vecx, \bar{\vecu})$, meaning that the entire trilevel model is solved to optimality. \label{cpa:step3}
\end{enumerate}

\noindent Note that the feasible region of the separation problem~\eqref{eq:subproblem_general:separation_problem} coincides with the set $\Y_{\Pi}(\vecx, \bar{\vecu})$ of feasible recourse solutions with respect to policy $\Pi$.
This does however mean that we cannot consider any exchanges with at least one attacked vertex.
Typically, an optimal solution $\vecy^{*} \in \Y_{\Pi}(\vecx, \bar{\vecu})$ will correspond to a non-maximal kidney exchange solution.
In other words, there exists some different solution~$\vecy' \in \X \setminus \set{\vecy^{*}}$ with $\vecy' \ge \vecy^{*}$, i.e., the solution $\vecy'$ consists of the exchanges of an optimal recourse solution $\vecy^{*}$ together with at least one attacked exchange.
However, the interdiction cut corresponding to $\vecy'$ is strictly stronger than the one corresponding to $\vecy^{*}$. Hence, it is desirable that we can actually separate the stronger type of cuts.
We will refer to these stronger cuts as \emph{lifted interdiction cuts}, as they can be obtained from the original separated cuts by lifting coefficients for the attacked exchanges $e$ for which $y'_{e} = 1$ and $y^{*}_{e} = 0$.
% We can obtain stronger cuts of type~\eqref{con:subproblem_general:interdiction_cuts} by identifying a dominating solution $\hat{\vecx} \in \X$,~\ie~$\hat{\vecx}$ encodes a maximal KEP solution with the property that $\textbf{0} \le \vecy \le \hat{\vecx}$, where $\textbf{0}$ denotes the zero vector. 

In order to separate lifted interdiction cuts, we need to consider a modified variant $R'_{\Pi}(\vecx, \bar{\vecu})$ of the separation problem. Based on the approach of~\citet{Fischetti2019},
%Notice that we identify solutions that consist of the exchanges of some optimal recourse solution to $R_{\Pi}(\vecx, \bar{\vecu})$, and possibly some additional exchanges to make it a maximal KEP solution.
% Hence, we refer to these stronger cuts as \emph{lifted interdiction cuts}. \BSil{Danny - Check the final paragraph of page 398 and the top of page 399 (journal page numbering) in the Fischetti paper. Rewrite to properly reference..}
the modified separation problem can be formulated as

\begin{maxi!}
    {\hat{\vecx} \in \X}{|P(\vecx) \cap P(\vecx, \bar{\vecu}, \vecy(\hat{\vecx},\bar{\vecu}))|\label{obj:separation_problem_lifting}}{}{R'_{\Pi}(\vecx, \bar{\vecu}) \define}
    \addConstraint{\vecy(\hat{\vecx},\bar{\vecu})}{\text{ encodes all non-attacked exchanges of } \hat{\vecx} \text{ given } \bar{\vecu}\label{con:separation_problem_lifting:feasible_recourse_y}}{}
    \addConstraint{\vecy(\hat{\vecx},\bar{\vecu})}{\in \argmax_{\bar{\vecy} \in \Y_{\Pi}(\vecx, \bar{\vecu})} |P(\vecx) \cap P(\vecx, \bar{\vecu}, \bar{\vecy})|.\label{con:separation_problem_lifting:optimal_recourse_y}}{}
\end{maxi!}

\noindent In this model, Constraint~\eqref{con:separation_problem_lifting:feasible_recourse_y} ensures that $\vecy(\hat{\vecx}, \vecu)$ does not include cycles with at least one attacked vertex.
Together with Constraints~\eqref{con:separation_problem_lifting:optimal_recourse_y}, it follows that $\vecy(\hat{\vecx}, \vecu)$ is feasible optimal to the original separation problem $R_\Pi(\vecx, \bar{\vecu})$. 
Objective~\eqref{obj:separation_problem_lifting} ensures that we identify a maximal KEP solution $\hat{\vecx}$. 
Notice that the infeasibilities of this maximal KEP solution $\hat{\vecx}$ are dealt with in the subproblem through Constraints~\eqref{con:subproblem_general:forcing_cuts} on~$\vecz$.
Therefore, we can now obtain stronger interdiction cuts for the subproblem without cutting off feasible solutions.
One example of how deploying this modified separation problem can lead to significantly stronger cuts is depicted in Figure~\ref{fig:lifting}.
Remark that the example shows that the modified separation problem leads to strictly stronger cuts, for both recourse policies $\fr$ and $\fse$.

%\BS{Look at Fischetti in detail, they also do this! (I.e. we do not introduce this notion) Maybe our weighting with preference for total number of cycles is a nice novelty}

\begin{figure}[htb]
\centering
\includegraphics[scale = 0.9]{./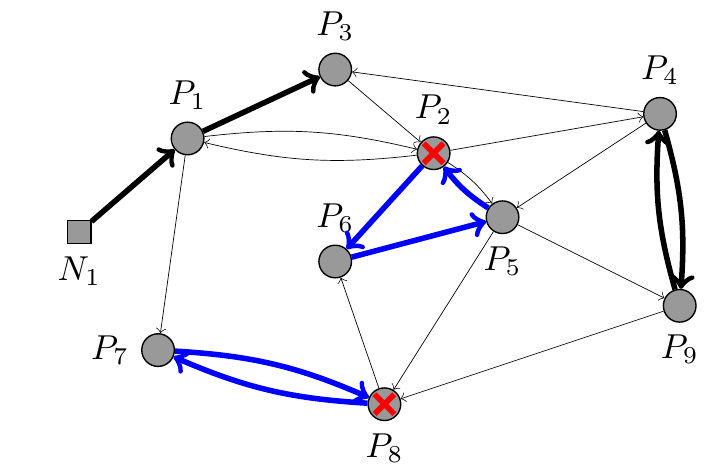}
\caption{(colored print) Given initial solution $\vecx \in \X$ (black and blue thick arcs) and attack $\vecu \in \U$ with $V(\vecx, \vecu) = \set{ P_2, P_8 }$, the optimal solution to the separation problem $R_{\Pi}(\vecx, \vecu)$ is given by the black thick arcs, while it coincides with $\vecx$ for the modified separation problem $R'_{\Pi}(\vecx, \vecu)$. \label{fig:lifting}}
\end{figure}

We will show that the modified separation problem is not significantly harder to solve than the original version. The reason for this is that we can rewrite $R'_{\Pi}(\vecx, \vecu)$ as a weighted kidney exchange problem.
Let us denote by $|\E(\hat{\vecx})|$ the number of exchanges (cycles and chains) encoded by $\hat{\vecx} \in \X$.
One can check that the objective of the modified separation problem can be reformulated as
\begin{equation}\label{eq:subproblem_general:separation_problem_lifting_as_weighted_KEP}
	R'_\Pi(\vecx, \vecu) \define \max_{\hat{\vecx} \in \X} |P(\vecx) \cap P(\vecx, \vecu, \vecy(\hat{\vecx},\vecu))|\cdot|V|+|\E(\hat{\vecx})|.
\end{equation}
When using~\eqref{eq:subproblem_general:separation_problem_lifting_as_weighted_KEP} as the objective function, we first prioritize finding a maximal kidney exchange solution $\hat{\vecx} \in \X$ for which the corresponding recourse solution $\vecy(\hat{\vecx}, \vecu)$ is optimal under $\vecu$.
Secondly, if multiple optima exist, ties are broken by selecting the optimal solution with the largest number of exchanges (which therefore must be a maximal solution).

An additional beneficial consequence is that it favors solutions with a large number of $2$-cycles and short chains.
This makes the cuts even stronger, as interdicting short exchanges (attacking one of its vertices) has a smaller impact.
In Section~\ref{sec:computational_results}, insights are provided on the impact of
deploying the original and modified separation problems on the cutting
plane algorithm.

%%% Local Variables:
%%% mode: latex
%%% TeX-master: "ManuscriptMay2022.tex"
%%% End:

\section{Formulations}\label{sec:formulations}

In this section, we provide concrete formulations for the models described in Section~\ref{sec:algorithmic_framework}.
We start by providing the formulations based on the cycle-chain (CC) formulations for basic kidney exchange problems for both recourse policies in Subsection~\ref{subsec:CC}.
Secondly, we do the same for the position-indexed chain-edge formulation (PICEF) in Subsection~\ref{subsec:PICEF}.
In particular, we formalize that each of the three decisions to be made can be optimized by solving single-level mixed-integer linear programming (MILP) models. 

\subsection{Cycle-chain formulation}\label{subsec:CC}

\subsubsection{CC: Master problem}\label{subsubsec:CC:master_problem}
Firstly, we reconsider the CC-based formulation for $z_{FR}(\U)$, which was proposed by~\cite{Glorie20}.
It considers a vector $\vecx = (x_e)_{e \in \E}$ of binary variables for each exchange $e \in \E$ indicating whether or not $e$ is selected for the initial solution.
Furthermore, we consider a vector $\vecy = (y_{e}^\vecu)_{e \in E, \vecu \in \U}$ of binary variables for each exchange $e \in \E$ and each attack $\vecu \in \U$, indicating whether or not $e \in E$ is selected for the recourse solution under $\vecu$.
The vector of variables $\vecz = (z_{j}^{\vecu})_{j \in P, \vecu \in \U}$ indicates for each pair $j \in P$ and each attack $\vecu \in \U$ whether or not the recipient of pair $j \in P$ receives a transplant with respect to both the initial and recourse solution under $\vecu$.
If we reconsider Model~\eqref{eq:robust_kep_general}, Constraints~\eqref{con:robust_kep_general:minimize_over_attacks} are modelled by Constraints~\eqref{con:z_FR:CC:overlapping_pairs}-\eqref{con:z_FR:CC:recourse_pairs}.
The feasibility of initial and recourse solutions is guaranteed by Constraints~\eqref{con:z_FR:CC:initial_packing} and~\eqref{con:z_FR:CC:x_variables}, and Constraints~\eqref{con:z_FR:CC:recourse_packing} and~\eqref{con:z_FR:CC:y_variables} respectively.
\begin{maxi!}<b>
  {\vecx, \vecy, \vecz, Z}{Z\label{obj:z_FR:CC}}{\label{eq:z_FR:CC}}{z_{\fr}(\U)\define}
  \addConstraint{Z - \sum_{j \in P} z_{j}^\vecu}{\le 0,\label{con:z_FR:CC:overlapping_pairs}}{\qquad \vecu \in \U}
  \addConstraint{z_{j}^\vecu - \sum_{e \in \E^j} x_e }{\le 0,\label{con:z_FR:CC:initial_pairs}}{\qquad \vecu \in \U, j \in P}
  \addConstraint{z_{j}^\vecu - \sum_{e \in \E^j} y_{e}^\vecu}{\le 0,\label{con:z_FR:CC:recourse_pairs}}{\qquad \vecu \in \U, j \in P}
  \addConstraint{\sum_{e \in \E^j} x_e }{\le 1,\label{con:z_FR:CC:initial_packing}}{\qquad j \in P \cup N}
  \addConstraint{\sum_{e \in \E^j} y_{e}^\vecu}{\le 1 - u_j,\label{con:z_FR:CC:recourse_packing}}{\qquad \vecu \in \U, j \in P \cup N}
  \addConstraint{\vecx}{\in \bins^{\E}\label{con:z_FR:CC:x_variables}}
  \addConstraint{\vecy^\vecu}{\in\bins^{\E},\label{con:z_FR:CC:y_variables}}{\qquad \vecu \in \U}
  \addConstraint{\vecz^\vecu}{\ge 0,\label{con:z_FR:CC:z_variables}}{\qquad \vecu \in \U}
  \addConstraint{Z}{\ge 0.\label{con:z_FR:CC:Z_variable}}
\end{maxi!}
% In order to obtain the CC-based formulation for $z_{\fse}(\U)$, a simple approach would be as follows.
% Let us denote by $\textrm{Ext}(d)$, for each chain $d \in \D_{\chainlen}$, the set of superchains of $d$,~\ie~the set of chains that have $d$ as a subchain.

% Then, we obtain $z_{\fse}(\U)$ through adding the following constraints to $z_{\fr}(\U)$:
% \begin{align}
%     y_c^{\vecu} &\ge x_{c}, & & c \in \C_{\cyclen}(\vecu), \vecu \in \U\\
%     \sum_{d' \in \textrm{Ext}(d^\vecu)} y_{d'}^\vecu &\ge x_{d}, & & d \in \D_{\chainlen}, \vecu \in \U : d^\vecu \text{ enforced subchain of $d$ given $\vecu$}
% \end{align}
In order to obtain the CC-based formulation for $z_{\fse}(\U)$, we need to modify Constraints \eqref{con:z_FR:CC:recourse_pairs} and~\eqref{con:z_FR:CC:recourse_packing}.
Let us denote by $\E(\vecu) \define C_\cyclen(\vecu) \cup \D_{\chainlen}(\vecu)$ the set of exchanges with no interdicted vertices with respect to $\vecu \in \U$,~\ie~
\[
    \E(\vecu) \define \set{ e \in \E \mid \sum_{i \in V(e)} u_i = 0 } 
    % \C_{\cyclen}(\vecu) &= \set{c \in \C_{\cyclen} \mid \sum_{i \in V(c)} u_i = 0}, \\
    % \D_{\chainlen}(\vecu) &= \set{ d \in \D_{\chainlen} \mid \sum_{i \in V(d)} u_i = 0}.
\]
where $\C_{\cyclen}(\vecu)$ and $\D_{\chainlen}(\vecu)$ are the analogous notions for cycles and chains.
We also consider a superscript for each $j \in V \cup N$ to restrict ourselves to the cycles and chains involving $j$.
Furthermore, we define for each chain $d = \set{v_{0}, v_{1}, \ldots, v_{q}} \in \D_{\chainlen}$ with $q = |A(d)| \le \chainlen$, and each vertex $j \in V(d)$ the subchain 
\[
    d^{\to j} \define
                \begin{cases}
                    \set{ v_{0}, v_{1} }, & \text{ if } j = v_{0}\\
                    \set{ v_{0}, \ldots, j }, & \text{ otherwise.}
                \end{cases}
\]
In other words, for each pair $j \in P$, we get that $d^{\to j}$ is the subchain of $d$ whose final arc goes into $j$, whereas for each non-directed donor $j \in N$ it consists of the first arc only.
We can interpret it as the smallest nonempty subchain of $d$ containing vertex $j$.
Now, we define for each $j \in P \cup N$ the set
\[ \I^j(\vecu) \define \C_{\cyclen}(\vecu) \cup \set{d \in \D_{\chainlen} \mid d^{\to j} \in \D_{\chainlen}(\vecu)}, \]
\ie~the set of exchanges that result in an enforced (partial) exchange involving $j$ in the recourse solution under $\vecu \in \U$, provided it was selected for the initial solution.

Hence, we obtain a CC-based formulation for $z_{FSE}(\U)$ by replacing Constraints~\eqref{con:z_FR:CC:recourse_pairs} and \eqref{con:z_FR:CC:recourse_packing} with the following constraints:
\begin{subequations}
    \begin{align}
        z_{j}^{\vecu} - \sum_{e \in \I^j(\vecu)} x_e - \sum_{e \in \E^j(\vecu)} y_{e}^{\vecu} &\le 0, & & \qquad \vecu \in \U, j \in P, \label{con:z_FSE:CC:recourse_pairs} \\ 
        \sum_{e \in \I^j(\vecu)} x_e + \sum_{e \in \E^j(\vecu)} y_{e}^{\vecu} &\le 1 - u_j,& &\qquad \vecu \in \U, j \in P \cup N. \label{con:z_FSE:CC:recourse_packing}
    \end{align}
\end{subequations}
Notice that in comparison with the $\fr$ setting, the interpretation of $\vecy^\vecu$ has changed in the $\fse$ setting.
In the $\fr$ setting, the vector $\vecy^{\vecu}$ encodes all the exchanges of the recourse solution, whereas this vector only encodes exchanges of the recourse solution that were not (partially) enforced in the $\fse$ setting.
In Constraints~\eqref{con:z_FSE:CC:recourse_pairs} and~\eqref{con:z_FSE:CC:recourse_packing}, this is reflected by an additional term with variables $x_e$ for each $e \in \I^j(\vecu)$, indicating (partially) enforced initial exchanges in which $j \in P \cup N$ is involved.

\subsubsection{CC: Attacker-defender subproblem}\label{subsubsec:CC:attacker_defender_subproblem}
We now consider MILP reformulations of the subproblem $s_{\Pi}(\vecx)$ faced by the ``attacker'' in the second-stage of the trilevel model, given the initial solution $\vecx \in \X$.
Notice that whenever an attack $\vecu \in \U$ has been realized, a recourse solution $\vecy^{\vecu} \in \Y_{\Pi}(\vecx, \vecu)$ will be selected that does not consider exchanges that have an attacked vertex.
For each exchange $e \in \E$, we define the weight $w_e(\vecx)$ as
\[ w_e(\vecx) \define |P(\vecx) \cap V(e)|. \]
Furthermore, let us denote by $\C_{\cyclen}(\vecx)$ and $\D_{\chainlen}(\vecx)$ the sets of initial cycles and initial chains respectively.
Following the framework described in Subsection~\ref{subsec:algorithmic_framework:stage2}, we obtain the following formulation for the bilevel subproblem.

\begin{mini!}<b>
  {\vecz, \vecu, Z}{Z\label{obj:s_FR:CC}}{\label{eq:s_FR:CC}}{s_{\fr}(\vecx; \X)\define}
  \addConstraint{Z}{\ge \sum_{e \in S} w_e(\vecx) z_e,\label{con:s_FR:CC:interdiction_cuts}}{\qquad S \in \X}
  \addConstraint{z_e}{\ge 1 - \sum_{j \in V(e)} u_j,\label{con:s_FR:CC:lb1_z}}{\qquad e \in \E}
  \addConstraint{\vecz}{\ge 0\label{con:s_FR:CC:lb2_z}}
  \addConstraint{\vecu}{\in \U\label{con:s_FR:CC:attack_budget}}
  \addConstraint{Z}{\ge 0.\label{con:s_FR:CC:dummy_variable}}    
\end{mini!}

Here, Constraints~\eqref{con:s_FR:CC:interdiction_cuts} are a concrete description of Constraints~\eqref{con:subproblem_general:interdiction_cuts} of the general framework.
Moreover, Constraints~\eqref{con:s_FR:CC:lb1_z} and~\eqref{con:s_FR:CC:lb2_z} correspond to the system of linear inequalities $C_{\fr}z \ge d_{\fr}(\vecx, \vecu)$ given in~\eqref{con:subproblem_general:forcing_cuts}.
% Notice that in this model, once the vector of attack variables $\vecu \in \U$ is fixed, we can set the values of the $z_e$-variables.
% This follows from the fact that $\vecz$ only occur in the right-hand side of Constraints~\eqref{con:s_FR:CC:interdiction_cuts}.
% As a result, we can set the $z_e$-variables as small as possible within the feasibility requirements.
% In particular, we can set $z_e = 1$ if $u_j = 0$ for all $j \in V(e)$, and $z_e = 0$ otherwise, following Constraints~\eqref{con:s_FR:CC:lb1_z} and~\eqref{con:s_FR:CC:lb2_z}. 
% This allows us to set the value of $Z$ as small as possible while still respecting to Constraints~\eqref{con:s_FR:CC:interdiction_cuts}.
% \BS{Do we use lines 90-94 later on? This can probably be shorter (see proposal. Is this still easy to understand, and/or does it still contain all important info.), and maybe scrapped altogether if we do not use this property later on.}
Notice that for this model, optimal solutions minimize the value of the $z_e$ variables, with Constraints~\eqref{con:s_FR:CC:lb1_z} and~\eqref{con:s_FR:CC:lb2_z} providing a lower bound.

In order to adapt it for the setting of the $\fse$ policy, we need to make a distinction between enforceable and non-enforceable exchanges first.
To that end, we define for each chain $d \in \D_{\chainlen}$ the set of \emph{subchains} of $d$ as
\[
    \sub(d) \define \set{ d^{\to j} \mid j \in P(d) }.
\]
Furthermore, let $\C_{\cyclen}(\vecx)$ and $\D_{\chainlen}(\vecx)$ denote the sets of initial cycles and initial chains, respectively.
The set of \emph{enforceable exchanges} can then be defined as
\[ 
    \enf(\vecx) \define \C_{\cyclen}(\vecx) \cup
                       \bigcup_{d \in \D_{\chainlen}(\vecx)} \sub(d),
\]
\ie~the collection of all initial cycles and all subchains of the initial chains selected for exchanges that could be enforced given initial solution $\vecx \in \X$.
It depends on the realization of the true attack $\vecu \in \U$ which of the exchanges in $\enf(\vecx)$ will be enforced.
Finally, we obtain the formulation $s_{\fse}(\vecx; \X)$ by 
replacing Constraints~\eqref{con:s_FR:CC:lb1_z} with the following linear inequalities:
\begin{subequations}
    \begin{align}
        z_e &\ge 1 - \sum_{j \in V(e)} u_j, && e \in \enf(\vecx)\label{con:s_FSE:CC:lb1_z}\\
        z_e &\ge 1- \sum_{j \in V(e)} u_j - \sum_{\substack{\tilde{e} \in \enf(\vecx):\\ \tilde{e} \cap e \ne \emptyset}} z_{\tilde{e}}, && e \in \E \setminus \enf(\vecx)\label{con:s_FSE:CC:lb2_z}\\
        z_e &\le 1 - u_j, && e \in \E, j \in V(e)\label{con:s_FSE:CC:ub1_z}
    \end{align}
\end{subequations}
Notice that Constraints~\eqref{con:s_FSE:CC:lb1_z} is still the same for exchanges $e \in \enf(\vecx)$,~\ie~for the exchanges that are enforceable.
However, Constraints~\eqref{con:s_FSE:CC:lb2_z} indicate that if $e \notin \enf(\vecx)$, it can only be selected for a feasible recourse solution whenever none of the enforced exchanges shares a vertex with $e$.
In this setting, it could be beneficial to set $z_e = 1$ for an attacked exchange, to allow $z_{e'} = 0$ for a higher weight exchange through Constraints~\eqref{con:s_FSE:CC:lb2_z}, therefore we need to add Constraints~\eqref{con:s_FSE:CC:ub1_z} to ensure correct values for the $z_e$-variables.
This correctly models the impact of the FSE recourse policy, as it prioritizes enforceable exchanges.
% \DBil{Notice that these constraints actually allow us to add terms to each interdiction cut including the initial exchanges $e$ with $x_e = 1$ (if not included already that is). The cuts are then not indexed by 'feasible solution' anymore, but the contribution of their z's will correspond to a feasible KEP solution.}

\subsubsection{CC: Finding optimal recourse solutions}\label{subsubsec:CC:recourse_problem}
Notice that the formulation of the attacker-defender subproblem $s_{\Pi}(\vecx; \X)$ contains an exponential number of Constraints~\eqref{con:s_FR:CC:interdiction_cuts}.
We will refer to these constraints as \emph{interdiction cuts}, as these constraints implicitly model for each feasible kidney exchange solution $S \in \X$ which exchanges are not interdicted with donor withdrawals.
The algorithm proposed in Subsection~\ref{subsec:algorithmic_framework:stage2} relies on an efficient method for finding violated constraints.
Below we describe the corresponding separation problem.
Given the initial solution $\vecx \in \X$ and attack $\vecu \in \U$, we define the weights $w_e(\vecx, \vecu) = w_e(\vecx)$ for each exchange $e \in \E$.
This allows us to write the problem $R_{\fr}(\vecx, \vecu)$ of finding the optimal recourse solution $\vecy^\vecu \in \Y_{\fr}(\vecx, \vecu)$ as a single-level mixed-integer program, namely
\begin{maxi!}<b>
  {\vecy}{\sum_{e \in \E} w_e(\vecx, \vecu) y_{e}^\vecu\label{obj:R_FR:CC}}{\label{eq:R_FR:CC}}{R_{\fr}(\vecx, \vecu)\define}
  \addConstraint{\sum_{e \in \E^j} y_{e}^\vecu}{\le 1 -u_j,\label{con:R_FR:CC:packing}}{\qquad j \in P \cup N}
  \addConstraint{\vecy^\vecu}{\in \bins^{\E}.}
\end{maxi!}

Constraints~\eqref{con:R_FR:CC:packing} impose that when vertex $j \in P \cup N$ is interdicted, then none of the exchanges~${e \in \E^j}$ can be selected for the recourse solution. Therefore, any optimal solution to Problem~\eqref{eq:R_FR:CC} is also a feasible recourse solution under $\vecu$.
In other words, Problem~\eqref{eq:R_FR:CC} is equivalent to the problem of finding a weighted kidney exchange solution on the subgraph $G[V \setminus V(\vecx, \vecu)]$.
We consider a modified version~$R'_{\fr}(\vecx, \vecu)$ of the separation problem that ``lifts'' optimal solutions for~\eqref{eq:R_FR:CC} to maximal kidney exchange solutions on the entire compatibility graph $G$, with the aim of separating stronger interdiction cuts~\eqref{con:s_FR:CC:interdiction_cuts}.
We can obtained this modified separation problem by making two modifications.
First, we need to replace objective coefficients $w_e(\vecx, \vecu)$ by alternative coefficients 
\begin{align*}
    w'_e(\vecx, \vecu) 
        \define 
        \begin{cases}
            w_e(\vecx, \vecu)|V|+1, & \text{ if } e \in \E^j(\vecu)\\
            1, & \text{ otherwise.}
        \end{cases}
\end{align*}
and secondly, we need to relax the right hand side of Constraints~\eqref{con:R_FR:CC:packing} to 1 such that also interdicted exchanges could be selected in the solution.
The weights are defined in order to prioritize non-interdicted exchanges of high original weight $w_e(\vecx, \vecu)$, and actually ensure that the set of non-interdicted exchanges in the optimal solution indicate an optimal solution for the original separation problem $R_{\fr}(\vecx, \vecu)$.
Ties are broken by selecting the solution with the largest number of exchanges (the +1 term).
Our experiments have shown that utilizing this modified separation problem leads to strictly stronger interdiction cuts at no significant expense in terms of computation time.

Furthermore, we can obtain the formulations for $R_{\fse}(\vecx, \vecu)$ and $R'_{\fse}(\vecx, \vecu)$ by considering the exchanges $e \in \enf(\vecx)$ that are actually enforced.
For each initial chain $d \in \D_{\chainlen}(\vecx)$, we now only consider its longest subchain with no interdicted vertices to be enforced. For these exchanges, we fix the variable $y_{e}^\vecu$ to 1.

\subsection{Position-indexed chain-edge formulation}\label{subsec:PICEF}
We also provide formulations based on the position-indexed chain-edge formulation (PICEF) introduced by~\cite {Dickerson2016}.
PICEF can be regarded as a hybrid variant of the cycle-chain formulation and edge formulation.
Its main advantage is that it avoids full enumeration of all chains in the compatibility graph, as the number of chains might be exponential in $\chainlen$.
Instead, it considers a formulation of chains in terms of arc variables with an additional index for their position in a chain.
For basic kidney exchange problems, this is of particular interest when the maximum chain length $\chainlen$ increases.

Similar to CC, it considers a vector of binary variables $\vecx = (x_{c})_{c \in \C_{\cyclen}}$ indicating whether or not each cycle $c \in \C_{\cyclen}$ is selected for the initial solution.
The chains~$d$ are described by a collection of variables $\xi_{ij,\ell}$, for each arc $(i,j) \in A(d)$ and feasible position in a chain $\ell \in \L(i,j) \subseteq \set{1,\dots, |A(d)|}$ in $d$. 
Here, arcs going out of the non-directed donor have position index 1, the subsequent arc has index 3, etc.
% \BS{Removed references to $d$, as the edge variables are not linked to specific chains. Check whether you believe this rewrite has all relevant info.}
Hence, an arc $(i,j)$ with $i \in N$ can only have index 1,~\ie~$\L(i,j) = \set{1}$.
Conversely, any~$(i,j) \in A$ with $i \in P$ can have any list $\L(i,j) \subseteq \L \setminus \set{1}$ of possible indices excluding 1.
We refer to the paper by~\citet{Dickerson2016} for techniques for reducing the sizes of these lists $\L(i,j)$, as this leads to formulations with (significantly) fewer variables.

We denote by $\P = \set{ (i,j,\ell) \in A \times \L \mid \ell \in \L(i,j) }$ the set of position-indexed arcs that can be used as building blocks for chains in $\D_{\chainlen}$.
The formulation based on PICEF will thus take into account a vector of binary variables $\vecxi = (\xi_{ij,\ell})_{(i,j,\ell) \in \P}$ indicating whether or not each position-indexed arc $(i,j,\ell) \in \P$ is selected for the initial solution.
We denote the entire encoding of initial solution by $\vecx' = (\vecx, \vecxi)$.

Furthermore, we let $\P^j$ denote the set of position-indexed arcs involving vertex $j \in P \cup N$,~\ie~
\[
    \P^j \define \set{ (u, v, \ell) \in \P \mid j = u \text{ or } j = v}. 
\]

\subsubsection{PICEF: Master problem}\label{subsubsec:PICEF:master_problem}
Similar to the initial solution encoding $\vecx'$, we reconsider for each attack~$\vecu \in \U$ the vector of binary variables $\vecy^{\vecu} = (y_c^{\vecu})_{c \in \C_{\cyclen}}$ describing whether or not cycle $c \in \C_{\cyclen}$ is selected for the recourse solution under~$\vecu \in \U$, and introduce the vector of binary variables $\vecpsi^\vecu = (\psi_{ij,\ell}^{\vecu})_{(i,j,\ell)\in\P}$ to describe chains selected for the recourse solution under $\vecu \in \U$. 
Hence, we denote by $\vecy' = (\vecy^{\vecu}, \vecpsi^{\vecu})_{\vecu \in \U}$ the entire encoding of recourse solutions under all possible attacks.
Similar to the CC version, we consider the vector of binary variables $\vecz = (z_{j}^\vecu)_{j \in P, \vecu \in \U}$ to indicate whether or not pair $j \in P$ is involved in an initial exchange and an exchange in the recourse solution under $\vecu \in \U$.

{\small
    \begin{maxi!}<b>
    { \vecx', \vecy', \vecz, Z }{Z\label{obj:z_FR:PICEF}}{\label{eq:z_FR:PICEF}}{z_{FR}(\U)\define}
    \addConstraint{Z - \sum_{j \in P} z_{j}^\vecu}{\le 0,\label{con:z_FR:PICEF:overlapping_pairs}}{\qquad \vecu \in \U}
    \addConstraint{z_{j}^\vecu - \sum_{c \in \C_{\cyclen}^j} x_{c} - \sum_{(i,j,\ell) \in \P^j} \xi_{ij,\ell}}{\le 0,\label{con:z_FR:PICEF:initial_pairs}}{\qquad \vecu \in \U, j \in P}
    \addConstraint{z_{j}^\vecu - \sum_{c \in \C_{\cyclen}^j} y_c^{\vecu} - \sum_{(i,j,\ell) \in \P^j} \psi^{\vecu}_{ij, \ell}}{\le 0,\label{con:z_FR:PICEF:recourse_pairs}}{\qquad \vecu \in \U, j \in P}
    \addConstraint{\sum_{c \in \C_{\cyclen}^j} x_{c} + \sum_{(i,j,\ell) \in \P^j} \xi_{ij,\ell} }{\le 1,\label{con:z_FR:PICEF:initial_packing_pairs}}{\qquad j \in P}
    \addConstraint{\sum_{c \in \C_{\cyclen}^j} y_c^{\vecu} + \sum_{(i,j,\ell) \in \P^j} \psi_{ij,\ell}^{\vecu} }{\le 1 - u_j,\label{con:z_FR:PICEF:recourse_packing_pairs}}{\qquad \vecu \in \U, j \in P}
    \addConstraint{\sum_{(j,i,1) \in \P^j} \xi_{ji,1} }{\le 1, \label{con:z_FR:PICEF:initial_packing_NDDs}}{\qquad j \in N}
    \addConstraint{\sum_{(j,i,1) \in \P^j} \psi^{\vecu}_{ji,1} }{\le 1 - u_j, \label{con:z_FR:PICEF:recourse_packing_NDDs}}{\qquad \vecu \in \U, j \in N}
    \addConstraint{\sum_{(j,i,\ell+1) \in \P^j} \xi_{ji, \ell+1} - \sum_{(i,j,\ell) \in \P^j} \xi_{ij,\ell}}{\le 0,\label{con:z_FR:PICEF:initial_precedence}}{\qquad j \in P, \ell \in \L \setminus \set{L}}
    \addConstraint{\sum_{(j,i,\ell+1) \in \P^j} \psi^{\vecu}_{ji,\ell+1} - \sum_{(i,j,\ell) \in \P^j} \psi^{\vecu}_{ij,\ell}}{\le 0,\label{con:z_FR:PICEF:recourse_precedence}}{\qquad \vecu \in \U, j \in P, \ell \in \L \setminus \set{L}}
    \addConstraint{\vecx}{\in \bins^{\C_{\cyclen}} \label{con:z_FR:PICEF:x_variables}}
    \addConstraint{\vecxi_{ij,\ell}}{\in \bins,\label{con:z_FR:PICEF:xi_variables}}{\qquad (i,j,\ell) \in \P}
    \addConstraint{\vecz^\vecu}{\ge 0,\label{con:z_FR:PICEF:z_variables}}{\qquad \vecu \in \U}
    \addConstraint{\vecy^\vecu}{\in \bins^{\C_{\cyclen}},\label{con:z_FR:PICEF:y_variables}}{\qquad \vecu \in \U}
    \addConstraint{\vecpsi_{ij,\ell}^\vecu}{\in \bins,\label{con:z_FR:PICEF:psi_variables}}{\qquad (i,j,\ell) \in \P, \vecu \in \U}
    \addConstraint{Z}{\ge 0.\label{con:z_FR:PICEF:Z_variable}}
    \end{maxi!}
}

Here, notice that Constraints~\eqref{con:z_FR:PICEF:initial_pairs}-\eqref{con:z_FR:PICEF:recourse_packing_pairs} are obtained from Constraints~\eqref{con:z_FR:CC:initial_pairs}-\eqref{con:z_FR:CC:recourse_packing} of the CC-based master problem by replacing, for each~$j \in P$, the variables $x_{d}$ for each chain $d \in \D^j_\chainlen$ with the variables~$\xi_{i,j,\ell}$ for all position-indexed arcs $(i,j,\ell) \in \P^j$.
 Constraints~\eqref{con:z_FR:PICEF:initial_packing_NDDs} and~\eqref{con:z_FR:PICEF:recourse_packing_NDDs} indicate that for each~$j \in N$, at most one of the arcs going out of $j$ can be selected for the initial and recourse solution.
Finally, Constraints~\eqref{con:z_FR:PICEF:initial_precedence} and~\eqref{con:z_FR:PICEF:recourse_precedence} are \emph{precedence constraints} for the initial and recourse solutions respectively, indicating that for each $j \in P$ and $\ell \in \L \setminus \set{\chainlen}$, one can only select an arc going out of $j$ with index~$\ell + 1$ if an arc going into $j$ is selected with index $\ell$.

A number of modifications are required to arrive at the PICEF-based formulation of the master problem~$z_{\fse}(\U)$ with respect to the FSE recourse policy.
For an initial solution $\vecx' = (\vecx, \vecxi)$, let us denote by~$\C_{\cyclen}(\vecx)$ and $\P(\vecxi)$ ($\D_\chainlen(\vecxi)$) the set of cycles and the set of position-indexed arcs (chains) induced by initial solution $\vecx'$.
Similarly, we can define the set $\enf(\vecx')$ of enforceable exchanges by
\[
    \enf(\vecx') \define \C_\cyclen(\vecx) \cup \bigcup_{d \in \D_\chainlen(\vecxi)} \sub(d)\ \left( = \enf(\vecx) \cup \enf(\vecxi) \right).
\]
% Adapting the PICEF-based formulation $z_{\fr}(\U)$ to the setting of the FSE policy is more complicated as each position-indexed arc variable $\xi_{ij,\ell}$ only encodes local information of chains.
Notice that given an attack $\vecu \in \U$, it is not directly clear if a position-indexed arc $(i,j,\ell) \in \P$ with~$\xi_{ij,\ell} = 1$ should be enforced in a recourse solution.
Let $d \in \D_\chainlen(\vecxi)$ be the unique chain containing the arc $(i,j)$ at position index $\ell$.
Then, the position-indexed arc variable for $(i,j,\ell) \in \P$ is enforced whenever the subchain $d^{\to j}$ contains no interdicted vertices.
Hence, we would like to track for each arc $(i,j) \in A$ whether or not 
\begin{enumerate}
    \item[(i)] it is a part of an initial chain $d \in \D_\chainlen(\vecxi)$,~\ie~$\xi_{ij,\ell} = 1$ for some $\ell \in \L(i,j)$ and
    \item[(ii)] $d^{\to j} \in \D_{\chainlen}(\vecu)$.
\end{enumerate}
To this end, we introduce binary variables $\beta_{ij}^{\vecu} \in \bins$ for each arc $(i,j) \in A$ and each attack $\vecu \in \U$.
This can be modelled by adding the following constraints:
\begin{subequations}\label{eq:z_FSE:PICEF:enforcement_constraints}
  \begin{align}
    \beta_{ij}^{\vecu} &\le \sum_{\ell \in \L(i,j)} \xi_{ij,\ell}, & & \vecu \in \U, (i,j) \in A\label{con:z_FSE:PICEF:initial}\\
    \beta_{ij}^{\vecu} &\ge \xi_{ij,1} - u_i - u_j, & & \vecu \in \U, (i,j) \in A: i \in N\label{con:z_FSE:PICEF:lb_arc_NDDs}\\
    \beta_{ij}^{\vecu} &\ge \sum_{\ell \in \L(i,j)} \xi_{ij,\ell} + \sum_{(k,i) \in A} \beta_{ki}^{\vecu} - 1 - u_j, & &\vecu \in \U, (i,j) \in A: i \in P\label{con:z_FSE:PICEF:lb_arc_pairs}\\
    \sum_{(i,j) \in A} \beta_{ij}^{\vecu} &\le \sum_{(k,i) \in A} \beta_{ki}^{\vecu}, & &\vecu \in \U, i \in P\label{con:z_FSE:PICEF:beta_precedence}\\
    \beta_{ij}^{\vecu} &\le 1 - u_i, & & \vecu \in \U, i \in P \cup N\label{con:z_FSE:PICEF:ub_src}\\
    \beta_{ij}^{\vecu} &\le 1 - u_j, & &\vecu \in \U, j \in P\label{con:z_FSE:PICEF:ub_dst}
    % \sum_{j: (i,j) \in A} \beta_{ij}^{\vecu} &\le \sum_{k:(k,i) \in A} \beta_{ki}^{\vecu}, & &\vecu \in \U, i \in P\label{con:z_FSE:PICEF:beta_precedence}
  \end{align}
\end{subequations}
Condition (i) is required by Constraints~\eqref{con:z_FSE:PICEF:initial}.
 Constraints~\eqref{con:z_FSE:PICEF:lb_arc_NDDs} to~\eqref{con:z_FSE:PICEF:ub_dst}, together with the precedence constraints~\eqref{con:z_FSE:PICEF:beta_precedence}, are used for enforcing arc $(i,j) \in A$ to be used in the recourse solution under $\vecu$ if~$(i,j)$ is part of an initial chain $d \in \D_{\chainlen}(\vecxi)$ with $d^{\to j} \in \D_{\chainlen}(\vecu)$.
This definition allows us to rewriting Constraints~\eqref{con:z_FR:PICEF:recourse_pairs},~\eqref{con:z_FR:PICEF:recourse_packing_pairs} and~\eqref{con:z_FR:PICEF:recourse_packing_NDDs} of $z_{\fr}(\U)$ as

\begin{subequations}\label{eq:z_FSE:PICEF:adapting_recourse_constraints}
\begin{align}
    z_{j}^{\vecu} - \sum_{c \in \C_{\cyclen}^j(\vecu)} x_{c} - \sum_{(i,j) \in A} \beta_{ij}^{\vecu} - \sum_{c \in \C_{\cyclen}^j} y_c^{\vecu} - \sum_{(i, j, \ell) \in \P^j} \psi^{\vecu}_{ij, \ell} &\le 0, &\vecu \in \U, j \in P\\
    \sum_{c \in \C_{\cyclen}^j(\vecu)} x_{c} + \sum_{(i,j) \in A} \beta_{ij}^{\vecu} + \sum_{c \in \C_{\cyclen}^j} y_c^{\vecu} + \sum_{(i, j, \ell) \in \P^j} \psi^{\vecu}_{ij, \ell} &\le 1 - u_j, & \vecu \in \U, j \in P\\
    \sum_{(j,i) \in A} \left(\beta_{ji}^{\vecu} + \psi^{\vecu}_{ji,1}\right) &\le 1 - u_j, & \vecu \in \U, j \in N.
\end{align}
\end{subequations}
Adding Constraints~\eqref{eq:z_FSE:PICEF:enforcement_constraints} and modifying the recourse constraints to~\eqref{eq:z_FSE:PICEF:adapting_recourse_constraints} gives us the PICEF-based formulation for $z_{\fse}(\U)$.

\subsubsection{PICEF: Attacker-defender subproblem}\label{subsubsec:PICEF:attacker_defender_subproblem}
We provide a PICEF-based formulation for the subproblem $s_{\Pi}(\vecx'; \X)$ faced by the attacker.
Similarly to the CC-based formulation~\eqref{eq:s_FR:CC}, we have $\vecz$-variables for each cycle $c \in \C_{\cyclen}$ such that $z_{c} = 1$ if $c \in \C_{\cyclen}(\vecu)$, and $z_{c} = 0$ otherwise.
However, we cannot follow a similar approach with variables $\zeta_{ij,\ell}$ for position-indexed arcs $(i,j,\ell) \in \P$.
For each $(i,j,\ell) \in \P$ with $\ell \ge 2$, it depends on which chain $d \in \D_{\chainlen}$ is considered among those involving arc $(i,j)$ at position $\ell$, and in particular its respective subchain $d^{\to j}$.
For that reason, we cannot unambiguously set the variable $\zeta_{ij,\ell}$ to 0 or 1.
\Cref{fig:picef-ambiguity} illustrates this issue.

\begin{figure}[htbp]
    \centering
    \includegraphics[scale=1.2]{./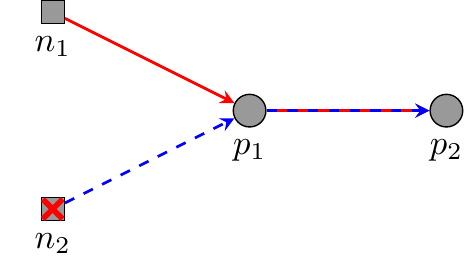}
    \caption{(colored print) Example of ambiguity with a position-indexed chain-edge variable.
    There exist two chains using $(p_{1}, p_{2})$ at position $2$. If the chain $\{n_{1}, p_{1}, p_{2}\}$ is considered, it is part of a feasible chain, whereas this is not the case for the chain $\{n_{2}, p_{1}, p_{2}\}$ due to $n_{2}$ being attacked.\label{fig:picef-ambiguity}}
\end{figure}

For the chains, we can circumvent this issue by instead introducing chain-indexed edge variables $\zeta_{ij}^d$ (notice that the chain index allows us to drop the position indices).
As we only require these variables for chains present in the recourse solutions, we do not need to enumerate all chains, preserving this benefit of PICEF over the CC formulation in terms of formulation compactness.
% We can circumvent this by instead considering chain-indexed arc variables $\zeta_{ij}^d$ for all $d \in \D_{\chainlen}: (i,j) \in A(d)$ indicating whether or not, given attack $\vecu \in \U$, the subchain $d^{\to j}$ belongs to $\D_{\chainlen}(\vecu)$.
% Recall that we aim to generate interdiction cuts on-the-fly through the cutting plane algorithm of Subsection~\ref{subsec:algorithmic_framework:stage2}, meaning we can also dynamically add these variables and their associated constraints.
% Therefore, in practice the number of separated interdiction cuts will typically not be large, so this approach will be tractable in practice.
We define the cycle weight $w_{c}(\vecx')$ for each $c \in \C_{\cyclen}$ as
\begin{equation}\label{eq:s_FR:PICEF:cycle_weights}
    w_{c}(\vecx') = |P(\vecx') \cap V(c)|,
\end{equation}
and the arc weight $w_{ij}(\vecx')$ for each $(i,j) \in A$ as
\begin{equation}\label{eq:s_FR:PICEF:arc_weights}
    w_{ij}(\vecx') = |P(\vecx') \cap \set{j}|,
\end{equation}
The subproblem $s_{\fr}(\vecx'; \X)$ faced by the attacker can then 
be formulated as a MILP as

{\scriptsize
    \begin{mini!}<b>
        {\vecz, \veczeta, \vecu, Z}{Z\label{obj:s_FR:PICEF}}{\label{eq:s_FR:PICEF}}{s_{\fr}(\vecx'; \X)\define}
        \addConstraint{Z}{\ge \sum_{c \in \C_{\cyclen}(S)} w_c(\vecx') z_c + \sum_{d \in \D_{\chainlen}(S)} \sum_{(i,j) \in A(d)} w_{ij}(\vecx') \zeta_{ij}^{d},\label{con:s_FR:PICEF:interdiction_cuts}}{\qquad S \in \X}
        \addConstraint{z_c}{\ge 1 - \sum_{j \in V(c)} u_j,\label{con:s_FR:PICEF:lb_cycle}}{\qquad c \in \C_\cyclen}
        \addConstraint{\zeta_{ij}^d}{\ge 1 - \sum_{k \in V(d^{\to j})} u_k,\label{con:s_FR:PICEF:lb_posarc}}{\qquad d \in \D_{\chainlen}: (i,j) \in A(d)}
        \addConstraint{\vecu}{\in \U\label{con:s_FR:PICEF:attack_budget}}
        \addConstraint{\vecz}{\ge 0\label{con:s_FR:PICEF:z_variables}}
        \addConstraint{\veczeta = (\veczeta^d)_{d \in \D_{\chainlen}}}{\ge 0,\label{con:s_FR:PICEF:zeta_variables}}{\qquad d \in \D_{\chainlen}}
        \addConstraint{Z}{\ge 0.\label{con:s_FR:PICEF:dummy_variable}}
    \end{mini!}
}
Here, Constraints~\eqref{con:s_FR:PICEF:interdiction_cuts} again take the role of implementing the interdiction constraints~\eqref{con:subproblem_general:interdiction_cuts}.
The system of linear inequalities $C_{\fr}(\vecz, \veczeta) \ge d_{\fr}(\vecx', \vecu)$ is given by Constraints~\eqref{con:s_FR:PICEF:lb_cycle} to~\eqref{con:s_FR:PICEF:lb_posarc}.
Notice that for each fixed $\vecu \in \U$, Constraints~\eqref{con:s_FR:PICEF:lb_posarc} allows us to set the value of $\zeta_{ij}^d = 1$ if the subchain $d^{\to j}$ contains no interdicted vertices, and $\zeta_{ij}^d = 0$ otherwise.
Then, for each feasible kidney exchange solution $S \in \X$, the right hand side of Constraints~\eqref{con:s_FR:PICEF:interdiction_cuts} equals the weight of the cycles and (partial) chains containing no interdicted vertices with respect to the variables $\vecu$.
% \DBil{Somewhere here, we should put in the comment that interdiction cuts for PICEF are stronger than CC because it incorporates the effect of chains up to (not including) the first interdicted vertex}

Below, we also provide the modified formulation $s_{\fse}(\vecx'; \X)$ with respect to the $\fse$ policy.
We provide the entire model, as it is quite different from the formulation $s_{\fr}(\vecx'; \X)$ for the full recourse setting.
We introduce variables $\vect = (t_j)_{j \in P \cup N}$ indicating whether or not vertex $j$ is involved in an enforced cycle or (partial) chain $e \in \enf(\vecx')$ given attack $\vecu \in \U$.
Hence, if $t_j = 1$,~\ie~$j$ is already involved in an enforced exchange, any other (non-enforced) exchange involving $j$ cannot be selected for the recourse solution.

{\scriptsize
  \begin{mini!}<b>
    {\vecz, \veczeta, \vect, \vecu, Z}{Z\label{obj:s_FSE:PICEF}}{\label{eq:s_FSE:PICEF}}{s_{\fse}(\vecx'; \X)\define}
    \addConstraint{Z}{\ge \sum_{c \in \C_{\cyclen}(S)} w_c(\vecx') z_c + \sum_{d \in \D_{\chainlen}(S)} \sum_{(i,j) \in A(d)} w_{ij}(\vecx')\zeta_{ij}^{d},\label{con:s_FSE:PICEF:interdiction_cuts}}{\quad S \in \X}
    \addConstraint{z_c}{\ge 1 - \sum_{j \in V(c)} u_j,\label{con:s_FSE:PICEF:lb_initial_cycle}}{\quad c \in \enf(\vecx)}
    \addConstraint{z_c}{\ge 1 - \sum_{j \in V(c)} (u_j + t_j),\label{con:s_FSE:PICEF:lb_noninitial_cycle}}{\quad c \in \C_\cyclen \setminus \enf(\vecx)}
    \addConstraint{z_c}{\le 1 - u_j,\label{con:s_FSE:PICEF:ub_initial_cycle}}{\quad c \in \enf(\vecx): j \in V(c)}
    \addConstraint{\zeta_{ij}^d}{\ge 1 - \sum_{k \in V(d^{\to j})} u_k,\label{con:s_FSE:PICEF:lb_initial_arc}}{\quad d \in \enf(\vecxi): (i,j) \in A(d)}
    \addConstraint{\zeta_{ij}^d}{\ge 1 - \sum_{k \in V(d^{\to j})} \left(u_k + t_k\right),\label{con:s_FSE:PICEF:lb_noninitial_arc}}{\quad d \in \D_\chainlen \setminus \enf(\vecxi): (i,j) \in A(d)}
    \addConstraint{\zeta_{ij}^d}{\le 1 - u_k,\label{con:s_FSE:PICEF:ub_initial_arc}}{\quad d \in \enf(\vecxi): (i,j) \in A(d), k \in V(d^{\to j})}
    \addConstraint{\sum_{j \in V(c)} t_j}{= |V(c)|z_c,\label{con:s_FSE:PICEF:covered_pair_cycle}}{\quad c \in \enf(\vecx)}
    \addConstraint{t_j}{= \zeta_{ij}^d,\label{con:s_FSE:PICEF:covered_pair_posarc}}{\quad d \in \enf(\vecxi): (i,j) \in A(d)}
    \addConstraint{t_i}{= \zeta_{ij}^d,\label{con:s_FSE:PICEF:covered_NDD}}{\quad d \in \enf(\vecxi): (i,j) \in A(d), i \in N}
    \addConstraint{\vecu}{\in \U\label{con:s_FSE:PICEF:attack_budget}}
    \addConstraint{\vecz,\vect}{\ge 0\label{con:s_FSE:PICEF:zt_variables}}
    \addConstraint{\veczeta = (\veczeta^{d})_{d \in \D_{\chainlen}}}{\ge 0\label{con:s_FSE:PICEF:zeta_variables}}
    \addConstraint{Z}{\ge 0.\label{con:s_FSE:PICEF:dummy_variable}}
  \end{mini!}   
}

Again, Constraints~\eqref{con:s_FSE:PICEF:interdiction_cuts} take the role of implementing~\eqref{con:subproblem_general:interdiction_cuts}.
The remaining constraints,~\ie Constraints~\eqref{con:s_FSE:PICEF:lb_initial_cycle}-\eqref{con:s_FSE:PICEF:covered_NDD} model the system of linear inequalities $C_{\fse}(\vecz, \veczeta, \vect) \ge d_{\fse}(\vecx', \vecu)$.
Here, for each vertex $j \in P \cup N$, the variable $t_j$ is correctly fixed through Constraints~\eqref{con:s_FSE:PICEF:covered_pair_cycle},~\eqref{con:s_FSE:PICEF:covered_pair_posarc} and~\eqref{con:s_FSE:PICEF:covered_NDD}.
These values are used in Constraints~\eqref{con:s_FSE:PICEF:lb_noninitial_cycle} and~\eqref{con:s_FSE:PICEF:lb_noninitial_arc} to ensure that $\vecz$- and $\veczeta$-variables of non-enforceable exchanges $e \notin \enf(\vecx')$ are fixed to zero whenever any exchange is enforced that has at least one common vertex with $e$.%,~\ie~fix $z_c$ and $\zeta_{ij}^d$ to 1 if $c \in \C_{\cyclen}(\vecu)$ and $d^{\to j} \in \D_{\chainlen}(\vecu)$ respectively, for any initial cycle $c \in \C_{\cyclen}(\vecx)$ and initially used arc $(i,j) \in A(d), d \in \D_{\chainlen}(\vecxi)$. 
For cycles and arcs not selected for the initial solution, these variable fixings are realized only if no enforced exchange it has nonempty intersection with was successful.

\subsubsection{PICEF: Finding optimal recourse solutions}\label{subsubsec:PICEF:recourse_problem}
Finally, we provide a PICEF-based MILP formulation for the separation problem $R_{\fr}(\vecx', \vecu)$ of finding optimal recourse solutions given initial solution $\vecx' \in \X$ and attack $\vecu \in \U$.
% The cycle weights $w_{c}(\vecx',\vecu)$ for each $c \in \C_{\cyclen}$, and arc weights $w_{ij}(\vecx', \vecu)$ for each $(i,j) \in A$ are defined as $w_{c}(\vecx')$ and~$w_{ij}(\vecx')$ respectively.
Two vectors of binary variables are used to describe cycles and position-indexed arcs selected for the recourse solution,~\ie~cycle variables~$\vecy^{\vecu} = (y_{c}^{\vecu})_{c \in \C_{\cyclen}}$ and position-indexed arc variables~$\vecpsi^{\vecu} = (\psi_{ij,\ell}^{\vecu})_{(i,j,\ell) \in \P}$.
We define the cycle weights~$w_c(\vecx', \vecu) = w_c(\vecx')$ for each cycle $c \in \C_{\cyclen}$ and arc weights $w_{ij}(\vecx', \vecu) = w_{ij}(\vecx')$ for each arc~$(i,j)\in A$.
The formulation of the separation problem $R_{\fr}(\vecx', \vecu)$ is given as follows.
{
    \begin{maxi!}<b>
        {\vecy^{\vecu}, \vecpsi^{\vecu}}{\sum_{c \in \C_{\cyclen}} w_c(\vecx') y_c^{\vecu} + \sum_{(i,j,\ell) \in \P} w_{ij}(\vecx') \psi_{ij,\ell}\label{obj:R_FR:PICEF}}{\label{eq:R_FR:PICEF}}{R_{\fr}(\vecx', \vecu)\define}
        \addConstraint{\sum_{c \in \C_{\cyclen}^j} y_c^{\vecu} + \sum_{(i,j,\ell) \in \P^j} \psi_{ij, \ell}^{\vecu}}{\le 1 - u_j,\label{con:R_FR:PICEF:packing_pairs}}{\qquad j \in P}
        \addConstraint{\sum_{(j,i,1) \in \P^j} \psi_{ji, 1}^{\vecu}}{\le 1 - u_j,\label{con:R_FR:PICEF:packing_NDDs}}{\qquad j \in N}
        \addConstraint{\sum_{(j,i,\ell+1) \in \P^j} \psi_{ji, \ell+1}^{\vecu} - \sum_{(i,j,\ell) \in \P^j} \psi_{ij, \ell}^{\vecu}}{\le 0,\label{con:R_FR:PICEF:precedence}}{\qquad j \in P, \ell \in \L \setminus \set{\chainlen}}
        \addConstraint{\vecy^{\vecu}}{\in \bins^{\C_{\cyclen}}}
        \addConstraint{\vecpsi^{\vecu}}{\in \bins^{\P}}
    \end{maxi!}
}

Notice that Problem~\eqref{eq:R_FR:PICEF} is again a weighted version of the basic PICEF-based kidney exchange model on the subgraph induced by $V \setminus V(\vecx', \vecu)$.
In order to obtain the modified version of the PICEF-based separation problem, also allowing non-feasible recourse solutions, we also need to consider which cycles and (partial) chains of solutions are feasible. 
In the case of cycles, this can be realized again through a modification of the cycle weights,~\ie
\[
    w'_{c}(\vecx', \vecu) \define
                                \begin{cases}
                                    w_{c}(\vecx')|V| + 1, & \text{ if } c \in \C_{\cyclen}(\vecu)\\
                                                       1, & \text{ otherwise.}
                                \end{cases}
\]
Again, as position-indexed arcs only hold local information of a chain, a similar modification does not work for position-indexed arc variables, as it might allow for infeasible chains of large weight.
To overcome the issue, we introduce a vector $\veceta^{\vecu} = (\eta_{ij,\ell}^{\vecu})_{(i,j,\ell)\in \P}$ of binary variables inducing a disjoint set of chains on the entire compatibility graph $G$,~\ie~not regarding any attacks.
We again consider $\vecpsi$-variables, now to model the enforced subchains under attack $\vecu$, with respect to the chains induced by $\veceta$. 
We can now disregard position indices, since these are already incorporated in the $\veceta$-variables. 
% \BSil{I don't like the use of the new variable. In 25, we only have "real" recourse arcs ($psi$). In 26, we also want to have "fictional" extensions to get maximal recourse solutions. The $\psi$ are now the fictional extensions and we introduce a new variable to give value to the "real" recourse arcs.}
The arc weights $w'_{ij}(\vecx', \vecu)$ for all arcs $(i,j) \in A$ are then defined as
\[
    w'_{ij}(\vecx', \vecu) 
        \define
        \begin{cases}
            w_{ij}(\vecx')|V| + 1, & \text{ if } i \in N\\
            w_{ij}(\vecx')|V|, & \text{ otherwise.}
        \end{cases}
\]
We consider a parameter $\varepsilon > 0$ that takes an infinitesimally small value to model the tiebreaker.
Although the optimal recourse solution is given by the $\vecy$- and $\vecpsi$-variables, the terms for the chains in the cuts we generate are induced by the $\veceta$-variables. 
The idea here is to find a feasible kidney exchange solution with the largest number of arcs such that its enforced cycles and (sub)chains form an optimal recourse solution.
Given the choices of the objective coefficients for the $\vecy$- and $\vecpsi$-variables, we can fix $\epsilon$ to $1$.
This way, any optimal solution $\vecpsi$ to the modified recourse problem still induces an optimal solution to the original recourse problem if we restrict ourselves to the set of non-interdicted exchanges.

The modified recourse problem can then be written as
% \DBil{Dit is de nieuwe versie gebaseerd op de opmerking van Bart dat hij het niet prettig vond dat de $\psi$-variabelen in de oude versie niet dezelfde betekenis hadden als in de Full Recourse formulatie. Probleem is alleen wel dat er nu berekeningen zijn gedaan op een andere formulatie dan hier gerapporteerd wordt. Merk op dat de $\psi$-variabelen zonder positie-indices zijn, omdat deze eigenlijk impliciet volgen uit de $\eta$-variabelen en constraints~\ref{con:modif_recourse:PICEF:psi_ub}.
% Zie ook de tekst hierboven.}
{\small
    \begin{maxi!}<b>
        {\vecy^{\vecu}, \vecpsi^{\vecu}, \veceta^{\vecu}}{\sum_{c \in \C_{\cyclen}} w'_c(\vecx', \vecu) y_c^{\vecu} + \sum_{(i,j,\ell) \in \P} w'_{ij}(\vecx', \vecu)\psi_{ij}^{\vecu} + \varepsilon \sum_{(i,j) \in A} \eta_{ij,\ell}^{\vecu}\label{obj:modif_recourse:PICEF}}{\label{eq:modif_recourse:PICEF}}{R'_{\fr}(\vecx', \vecu)\define}
        \addConstraint{\sum_{c \in \C_{\cyclen}^j} y_c^{\vecu} + \sum_{(i,j,\ell) \in \P} \eta_{ij,\ell}^{\vecu}}{\le 1,\label{con:modif_recourse:PICEF:packing_pairs}}{\qquad j \in P}
        \addConstraint{\sum_{(j,i) \in A} \eta_{ji, 1}^{\vecu}}{\le 1,\label{con:modif_recourse:PICEF:packing_NDDs}}{\qquad j \in N}
        \addConstraint{\sum_{(j,i,\ell+1) \in \P} \eta_{ji,\ell + 1}^{\vecu} - \sum_{(i,j,\ell)\in \P} \eta_{ij,\ell}^{\vecu}}{\le 0, \label{con:modif_recourse:PICEF:precedence}}{\qquad j \in P, \ell \in \L \setminus \set{\chainlen}}
        \addConstraint{\psi_{ij}^{\vecu} - \sum_{\ell \in \L(i,j)} \eta_{ij,\ell}^{\vecu}}{\le 0,\label{con:modif_recourse:PICEF:psi_ub}}{\qquad (i,j) \in A}
        \addConstraint{\sum_{(i,j) \in A} \psi_{ij}^{\vecu}}{\le 1 - u_{j},\label{con:modif_recourse:PICEF:psi_ub_pair}}{\qquad j \in P}
        \addConstraint{\sum_{(j,i) \in A} \psi_{ji}^{\vecu}}{\le 1 - u_{j},\label{con:modif_recourse:PICEF:psi_ub_NDD}}{\qquad j \in N}
        \addConstraint{\sum_{(j,i)\in A} \psi_{ji}^{\vecu} - \sum_{(i,j) \in A} \psi_{ij}^{\vecu}}{\le 0,\label{con:modif_recourse:PICEF:psi_precedence}}{\qquad j \in P}
        \addConstraint{\vecy^{\vecu}}{\in \bins^{\C_{\cyclen}}}
        \addConstraint{\vecpsi^{\vecu}}{\in \bins^{A}}
        \addConstraint{\veceta^{\vecu}}{\in \bins^{\P}}
    \end{maxi!}
}

In this model, Constraints~\eqref{con:modif_recourse:PICEF:packing_pairs}-\eqref{con:modif_recourse:PICEF:precedence} requires the solution $(\vecy^{\vecu}, \veceta^{\vecu})$ to induce a feasible kidney exchange solution on $G$.
The remaining constraints are used to restrict the values that each arc variable $\psi_{ij}^{\vecu}$ can take,~\ie~the variables that correspond to (sub)chains that are feasible for the recourse solution given $\vecu$.
Constraints~\eqref{con:modif_recourse:PICEF:psi_ub} impose that~$\psi_{ij}^{\vecu}$ can be set to 1 only if there exists some initial chain $d$ with $(i,j) \in A(d)$.
Furthermore, Constraints~\eqref{con:modif_recourse:PICEF:psi_ub_pair} and~\eqref{con:modif_recourse:PICEF:psi_ub_NDD} require that $\psi_{ij}^{\vecu} = 0$ if either $i$ or $j$ is interdicted.
Constraints~\eqref{con:modif_recourse:PICEF:psi_precedence} are precedence constraints,~\ie~the arcs $(i,j) \in A$ with $\psi_{ij}^{\vecu}$ indeed induce a packing of non-interdicted subchains.

% This way,~\objref{R_FR_modified:PICEF} reflects that we aim to find an optimal recourse solution, breaking ties in favour of solutions with a larger number of exchanges. \BSil{But is this still an optimal recourse solution? (It is no longer a feasible recourse solution!)}
%\CHil{I have the feeling that we should provide a few more sentences that describe the logic of the model and why the objective is correct (in particular, elaborating on the choice of~$\varepsilon$.}

Furthermore, we can obtain the formulations for $R_{\fse}(\vecx', \vecu)$ and $R'_{\fse}(\vecx', \vecu)$ from Formulation~\eqref{eq:R_FR:PICEF} and Formulation~\eqref{eq:modif_recourse:PICEF}, respectively, by fixing variables to 1 as follows:
\begin{align*}
    y_{c}^{\vecu} &= 1, & & \qquad c \in \C_{\cyclen}(\vecu): x_{c} = 1\\
    \psi_{ij}^{\vecu} &= 1, & & \qquad (i,j) \in A: \exists d \in \D_{\chainlen}(\vecxi) \text{ s.t. } (i,j) \in A(d),\ d^{\to j} \in \D_{\chainlen}(\vecu)
\end{align*}
% $\psi_{ij,\ell}^{\vecu$ to 1 for $(i,j,\ell) \in \P(\vecxi)$ as follows.
% First, consider all $(i,j,\ell) \in \P(\vecxi)$ with $\ell = 1$. 
% If both $u_i = 0$ and $u_j = 0$, we enforce $\psi_{ij,\ell}$ to 1, otherwise we do nothing.
% Then, consider all $(i,j,\ell) \in \P(\vecxi)$ with $\ell = 2$.
% If both $\psi_{ki,\ell-1}$ is enforced to 1 for the direct predecessor $(k,i,\ell-1) \in \P(\vecxi)$, and $u_j = 0$, we can enforce $\psi_{ij,\ell}$ to 1, otherwise we again do nothing.
% This procedure is repeated until all position-indexed arcs $(i,j,\ell) \in \P(\vecxi)$ are processed.
% Otherwise, we can preserve $(i,j,\ell)$ in the recourse solution if both the unique direct predecessor $(k,i,\ell-1) \in \P(\vecxi)$ is preserved in the recourse solution and $u_j = 0$.
In the next section, we will show computational results based on the algorithmic framework and formulations laid out before.

\subsection{Strength of formulations}
Having described the CC and PICEF based formulations, we now compare their strength, in particular for the second-level problems $s_{\Pi}(\vecx; \X)$. Recall that we solve these through a cut generation procedure. Most importantly, we solve the relaxations $s_{\Pi}(\vecx; \bar{\X})$, where a limited set $\bar{\X} \subset \X$ of kidney exchange solutions are considered. We show that in this case, the PICEF-based formulation is stronger. We use $Z^*_{\fr,PICEF}(\vecx', \bar{\X})$ to denote the value of the optimal solution to~\eqref{eq:s_FR:PICEF} and $Z^*_{\fr,CC}(\vecx, \bar{\X})$ to denote the optimal solution value of~\eqref{eq:s_FR:CC}.

\begin{theorem}\label{thm:FR:strength_comparison}
Given a graph $G$, set of exchanges $\E$, initial solution $\vecx$ and a set of feasible kidney exchange solutions $\bar{\X} \subset \X$, we have that $Z^*_{\fr,PICEF}(\vecx, \bar{\X}) \ge Z^*_{\fr,CC}(\vecx, \bar{\X})$. Furthermore, there exist instances for which $\bar{\X}$, $Z^*_{\fr,PICEF}(\vecx, \bar{\X}) > Z^*_{\fr,CC}(\vecx, \bar{\X})$.
\end{theorem}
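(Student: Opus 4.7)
The plan is to reduce the theorem to a pointwise comparison of the interdiction cut right-hand sides. For both $s_{\fr,CC}(\vecx; \bar{\X})$ and $s_{\fr,PICEF}(\vecx; \bar{\X})$, the only lower bound on the scalar variable $Z$ comes from one interdiction cut per $S \in \bar{\X}$, and in both formulations the cut-defining variables ($z_{e}$ for CC, $z_{c}$ and $\zeta_{ij}^{d}$ for PICEF) are pinned down by $\vecu$ through the forcing constraints $z_{e} \ge 1 - \sum_{j \in V(e)} u_{j}$ and $\zeta_{ij}^{d} \ge 1 - \sum_{k \in V(d^{\to j})} u_{k}$ together with non-negativity. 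Hence, at a fixed $\vecu$, the optimum equals the maximum over $S \in \bar{\X}$ of the right-hand side of the cut of $S$. So I will show that, for every $\vecu \in \U$ and every $S \in \bar{\X}$, the PICEF cut evaluates to at least the CC cut; minimizing over $\vecu$ then delivers the weak inequality.

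For this pointwise comparison the cycles are trivial, since the contribution $w_{c}(\vecx) z_{c}$ is identical in both formulations. For chains $d \in \D_{\chainlen}(S)$ I would use the identity $w_{d}(\vecx) = \sum_{(i,j) \in A(d)} w_{ij}(\vecx)$, which holds because the pairs in $V(d)$ are in bijection with the targets of the arcs of $d$. A two-case analysis then closes the argument: if no vertex of $V(d)$ is attacked, $z_{d} = 1$ and every $\zeta_{ij}^{d} = 1$, so both contributions equal $w_{d}(\vecx)$; otherwise $z_{d} = 0$ and the CC term vanishes, while the PICEF sum still counts the non-negative weight of whatever prefix of $d$ survives. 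The PICEF cut thus dominates the CC cut pointwise in $\vecu$ and $S$.

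For strict inequality I would exhibit a compatibility graph on $\set{n_{1}, n_{2}, p_{1}, p_{2}, p_{3}, p_{4}, p_{5}}$ whose arcs form exactly the chains $d_{1} = (n_{1}, p_{1}, p_{2}, p_{3})$ and $d_{2} = (n_{2}, p_{3}, p_{4}, p_{5})$, set $\chainlen = 3$, $B = 1$, and $\bar{\X} = \set{\set{d_{1}}, \set{d_{2}}}$. Letting $\vecx$ be the initial solution consisting of the two disjoint chains $(n_{1}, p_{1}, p_{2})$ and $(n_{2}, p_{3}, p_{4}, p_{5})$, every arc relevant to $d_{1}$ or $d_{2}$ carries weight $1$. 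Under CC the attacker attacks $p_{3}$, which kills both chains simultaneously and gives $Z^{*}_{\fr,CC}(\vecx, \bar{\X}) = 0$. Under PICEF, attacking $p_{3}$ still leaves $\zeta^{d_{1}}_{n_{1}, p_{1}} = \zeta^{d_{1}}_{p_{1}, p_{2}} = 1$, so the cut from $\set{d_{1}}$ has value $2$; a direct enumeration of the other seven single-vertex attacks shows that each produces $Z \ge 3$, since the chain whose vertices are untouched contributes its full weight $3$. Hence $Z^{*}_{\fr,PICEF}(\vecx, \bar{\X}) = 2 > 0$.

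The main obstacle is engineering the witness for strict inequality, not the weak inequality itself. In any instance with a single chain per $S \in \bar{\X}$ the PICEF attacker can simply attack the root NDD (or first pair) of that chain and match the CC value, so strictness really requires a single interior vertex that is critical to chains lying in several \emph{distinct} members of $\bar{\X}$, while \emph{not} being the root of each of those chains. The construction above achieves this minimally: the shared vertex $p_{3}$ is the tail of $d_{1}$ but the first pair of $d_{2}$, so the PICEF attacker who targets $p_{3}$ is stuck paying the weight-$2$ surviving prefix $(n_{1}, p_{1}, p_{2})$ of $d_{1}$.
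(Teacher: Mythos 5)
Your proof is correct and follows essentially the same route as the paper: the weak inequality is established by the identical pointwise comparison of the interdiction-cut right-hand sides (cycles coincide; for chains, use $w_d(\vecx) = \sum_{(i,j)\in A(d)} w_{ij}(\vecx')$ and the two-case analysis on whether $d$ is attacked), and strictness is shown by an explicit witness. Your witness (two chains sharing the interior/tail vertex $p_3$) differs from the paper's (a chain plus a 2-cycle on its tail) but exploits exactly the same mechanism—an attacked interior vertex kills the whole chain under CC while leaving a positive-weight prefix under PICEF—and checks out, modulo the harmless miscount of ``seven'' remaining single-vertex attacks.
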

\begin{proof}
To prove $Z^*_{\fr,PICEF}(\vecx, \bar{\X}) \ge Z^*_{\fr,CC}(\vecx, \bar{\X})$, we compare the right-hand sides of Constraints~\eqref{con:s_FR:CC:interdiction_cuts} and Constraints~\eqref{con:s_FR:PICEF:interdiction_cuts}, respectively. We claim that for a given recourse solution $S \in \X$ and any fixed attack \vecu, the right-hand side of the PICEF variant is at least as large as the right-hand side of the CC variant, \ie
\begin{align}
    \sum_{c \in \C(S)} w_c(\vecx') z_c + \sum_{d \in \D(S)} \sum_{(i,j) \in A(d)} w_{ij}(\vecx')\zeta_{ij}^{d} \ge \sum_{c \in \C(S)} w_c(\vecx') z_c + \sum_{d \in \D(S)} w_d(\vecx') z_d.
\end{align}
Since the cycle variables are handled identically in both formulations, we must only prove
\begin{align}
    \sum_{d \in \D(S)} \sum_{(i,j) \in A(d)} w_{ij}(\vecx')\zeta_{ij}^{d} \ge\sum_{d \in \D(S)} w_d(\vecx') z_d \label{eq: CC-PICEF Comparison}.
\end{align}
If for a given chain $d \in \D(S)$, no vertices $v \in V(d)$ are attacked, we have $z_d = 1$ (due to Constraints~\eqref{con:s_FR:CC:lb1_z}), as well as $\zeta_{ij}^{d} = 1$ for all~$(i,j) \in A(d)$ (due to Constraints~\eqref{con:s_FR:PICEF:lb_posarc}). 
By definition, $\sum_{(i,j) \in A(d)} w_{ij}(\vecx') = w_{d}(\vecx')$, thus chains that are not attacked contribute the same value to the right-hand sides in both formulations. 
If the chain is attacked at any point, then~${z_{d} = 0}$, and the chain does not contribute to the right-hand side of Constraints~\eqref{con:s_FR:CC:interdiction_cuts}. 
As~$w_{ij}(\vecx') \ge 0$ and $\zeta_{ij}^{d} \ge 0$ for all~$(i,j) \in A(d)$, Inequality~\eqref{eq: CC-PICEF Comparison} follows.

We now provide an instance where for $\bar{\X}$, $Z^*_{\fr,PICEF}(\vecx, \bar{\X}) > Z^*_{\fr,CC}(\vecx, \bar{\X})$. Consider the graph $G = (V,A)$ given in~\Cref{fig:cut-strength}, and attack budget $B = 1$.
% \BS{Chain with 3 transplants, 2nd and 3rd pair can also form 2 cycle.} 
The first stage solution consists only of the chain $(1,2,3,4)$,~\ie~$\vecx$ for CC and $\vecx'$ for PICEF. As a result, the value of every exchange in the recourse solutions is equal to the number of pairs it contains,~\ie~$w_{c}(\vecx) = w_{c}(\vecx') = |V(c)|$ for each cycle $c$, $w_{d}(\vecx) = |V(d)|$ for each chain $d$ and $w_{ij}(\vecx') = 1$ for each arc $(i,j) \in A$.
Let $\bar{\X} = \set{S_{1}, S_{2}}$ where $S_{1} = {(1,2,3,4)}$ and $S_{2} = {(3,4)}$.

For this instance, $Z^*_{\fr,CC}(\vecx, \bar{\X}) = 0$, which can be achieved by attacking pair $3$ or pair $4$. However, in the PICEF formulation, attacking pair $3$ or $4$ forces $\zeta_{1,2}^d = 1$, and $Z^*_{\fr,PICEF}(\vecx, \bar{\X}) \ge 1$ due to recourse solution $S_{1}$. Not attacking either of these pairs forces $z_{c} = 1$ for the cycle $(3,4)$, and $Z^*_{\fr,PICEF}(\vecx, \bar{\X}) \ge 2$. Consequently, for this instance $Z^*_{\fr,PICEF}(\vecx, \bar{\X}) > Z^*_{\fr,CC}(\vecx, \bar{\X})$.
\end{proof}

\begin{figure}[htbp]
    \centering
    \includegraphics[scale=1.2]{./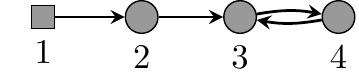}
    \caption{Example compatibility graph illustrating the strength of PICEF-based interdiction cuts compared to CC-based ones.\label{fig:cut-strength}}
\end{figure}

Notice that for single-stage kidney exchange programs, it was shown by~\cite{Dickerson2016} that the LP relaxation of the cycle-chain formulation is strictly stronger than the LP relaxation for PICEF.
However, our result implies that the PICEF-based formulation leads to stronger relaxations for the attacker-defender subproblem than the CC-based formulation.
A similar result holds for the formulations with respect to the FSE policy.

\begin{theorem}\label{thm:FSE:strength_comparison}
Given a graph $G$, set of exchanges $\E$, initial solution $\vecx$ and a set of feasible kidney exchange solutions $\bar{\X}$, $Z^*_{\fse,PICEF}(\vecx, \bar{\X}) \ge Z^*_{\fse,CC}(\vecx, \bar{\X})$. Furthermore, there exist instances for which~$\bar{\X}$, $Z^*_{\fse,PICEF}(\vecx, \bar{\X}) > Z^*_{\fse,CC}(\vecx, \bar{\X})$.
\end{theorem}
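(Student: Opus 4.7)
The approach is to mirror the proof of Theorem~\ref{thm:FR:strength_comparison} and establish the inequality through a term-by-term comparison of the right-hand sides of the interdiction cuts~\eqref{con:s_FR:CC:interdiction_cuts} and~\eqref{con:s_FSE:PICEF:interdiction_cuts}. Fixing any attack $\vecu \in \U$, the bound Constraints~\eqref{con:s_FSE:CC:lb1_z}--\eqref{con:s_FSE:CC:ub1_z} in CC and~\eqref{con:s_FSE:PICEF:lb_initial_cycle}--\eqref{con:s_FSE:PICEF:covered_NDD} in PICEF, combined with the integrality of $\vecu$, force the optimal $\vecz$, $\veczeta$, and $\vect$-values to be binary. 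It then suffices to show, for each $S \in \bar{\X}$, that the PICEF contribution per cycle and chain of $S$ is at least the corresponding CC contribution; taking maxima over $S$ and minima over $\vecu$ then yields $Z^*_{\fse,\mathrm{CC}}(\vecx, \bar{\X}) \le Z^*_{\fse,\mathrm{PICEF}}(\vecx, \bar{\X})$.

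Cycle contributions coincide in the two formulations: an enforceable cycle $c \in \C_{\cyclen}(\vecx)$ contributes $w_c(\vecx')$ when no vertex of $c$ is attacked and $0$ otherwise in both, while a non-enforceable cycle in $S$ is forced to $0$ whenever some enforceable exchange intersecting it survives the attack, via Constraints~\eqref{con:s_FSE:CC:lb2_z} in CC or via the $\vect$-coupling~\eqref{con:s_FSE:PICEF:lb_noninitial_cycle} and~\eqref{con:s_FSE:PICEF:covered_pair_cycle}--\eqref{con:s_FSE:PICEF:covered_NDD} in PICEF. The genuine improvement arises on chains. For an enforceable chain $d \in \D_{\chainlen}(S) \cap \D_{\chainlen}(\vecx)$ whose longest surviving prefix is $d^{\to v_k}$ with $k < |A(d)|$, the CC variable $z_d^{\mathrm{CC}} = 0$ contributes nothing, whereas Constraints~\eqref{con:s_FSE:PICEF:lb_initial_arc} and~\eqref{con:s_FSE:PICEF:ub_initial_arc} force $\zeta_{ij}^d = 1$ for exactly the arcs of $d^{\to v_k}$, contributing the positive weight $\sum_{(i,j) \in A(d^{\to v_k})} w_{ij}(\vecx')$ to the PICEF right-hand side. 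Non-enforceable chains in $S$ are handled analogously via Constraints~\eqref{con:s_FSE:PICEF:lb_noninitial_arc}.

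For the strict inequality, I would reuse the instance from Figure~\ref{fig:cut-strength} with initial chain $(1,2,3,4)$ and $\bar{\X} = \set{S_1, S_2}$, $S_1 = \set{(1,2,3,4)}$, $S_2 = \set{(3,4)}$. In the CC formulation, the attacker's best response is to attack pair $3$ or pair $4$, which destroys both the full chain ($z_d = 0$) and $S_2$'s cycle ($z_{(3,4)} = 0$ by~\eqref{con:s_FSE:CC:ub1_z}), so $Z^*_{\fse,\mathrm{CC}}(\vecx, \bar{\X}) = 0$. In PICEF, attacking pair $3$ still leaves $\zeta_{12}^d = 1$ (since neither vertex $1$ nor $2$ is attacked), contributing a positive value of at least $1$ to the interdiction cut of $S_1$, so $Z^*_{\fse,\mathrm{PICEF}}(\vecx', \bar{\X}) \ge 1 > 0$.

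The main obstacle in fleshing out this argument is verifying that the non-enforceable lower bounds~\eqref{con:s_FSE:CC:lb2_z} in CC and~\eqref{con:s_FSE:PICEF:lb_noninitial_cycle}--\eqref{con:s_FSE:PICEF:lb_noninitial_arc} in PICEF induce the same binary values at integer optimum. The two penalty terms---$\sum_{\tilde{e} \in \enf(\vecx):\ \tilde{e} \cap e \ne \emptyset} z_{\tilde{e}}$ in CC versus $\sum_{j \in V(e)} t_j$ in PICEF---can differ numerically when an enforceable chain meets a non-enforceable exchange in more than one vertex, and extra care is needed for the $\vect$-equalities~\eqref{con:s_FSE:PICEF:covered_pair_cycle}--\eqref{con:s_FSE:PICEF:covered_NDD} which do not individually pin down every $t_j$ for vertices in enforceable cycles. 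Once one verifies that both penalty sums take value $\ge 1$ precisely when some overlapping enforceable exchange survives the attack (so that the induced binary value of the non-enforceable variable matches in both formulations), the partial-chain dominance from PICEF carries the overall inequality.
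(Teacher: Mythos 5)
Your proposal follows essentially the same route as the paper, which omits the proof of this theorem entirely with the one-line remark that the argument is analogous to the Full Recourse case and that the only complication is the fixed exchanges, which additionally allow $z_e$ or $\zeta_{ij}^d$ variables of non-attacked exchanges to be set to $0$ but with effects identical to those of attacks. Your term-by-term comparison, the partial-chain dominance argument for enforceable chains, the reuse of the \Cref{fig:cut-strength} instance for strictness, and even the obstacle you flag about matching the non-enforceable penalty terms in Constraints~\eqref{con:s_FSE:CC:lb2_z} versus~\eqref{con:s_FSE:PICEF:lb_noninitial_cycle}--\eqref{con:s_FSE:PICEF:lb_noninitial_arc} are all consistent with, and in fact more detailed than, the paper's sketch.
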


We omit the proof, as the argument is analogous to the Full Recourse case. The only complication are the fixed exchanges, which also allow $z_e$ or $\zeta_{ij}^d$ variables of non-attacked exchanges to be set to 0. However, these have identical effects to the attacks allowing these variables to be set to 0, and so the same arguments from the FR case hold. 

%\begin{proof}
%To prove $Z^*_{\fse,PICEF}(\vecx, \bar{\X}) \ge Z^*_{\fse,CC}(\vecx, \bar{\X})$, we compare Constraints~\eqref{con:s_FR:CC:interdiction_cuts} and  \conref{S_FSE:PICEF:interdiction_cuts}. We claim that given recourse solutions $S \in \X$ and any fixed attack \vecu, the right-hand side of the PICEF variant is at least as high as the right-hand side of the CC variant, i.e.
%\begin{align}
%    \sum_{c \in \C(S)} w_c(\vecx') z_c + \sum_{d \in \D(S)} \sum_{(i,j) \in A(d)} w_{ij}(\vecx')\zeta_{ij}^{d} \ge \sum_{c \in \C(S)} w_c(\vecx') z_c + \sum_{d \in \D(S)} w_d(\vecx') z_d
%\end{align}
%In the case of FSE, cycles are handled identically if $c \in \enf(\vecx)$. If $c \notin \enf(\vecx)$, we refer to the discussions of both formulations to show $z_c = 0$ is allowed in the same situations (initial solution $\vecx$ and attack $\vecu$). We must thus only compare the 
%\end{proof}

%\BSil{ Ended up not doing the FSE proof in full, as the argument is almost the same as the FR case, except for some more bookkeeping with the fixed exchanges. Can add if we think this is necessary.
%}
%\CHil{I think that it is fine to not give the proof.}

%%% Local Variables:
%%% mode: latex
%%% TeX-master: "ManuscriptMay2022"
%%% End:

\section{Computational Results}\label{sec:computational_results}

The aim of this section is to evaluate the performance of our cutting plane approaches, which we will refer to as cut-CC and cut-PICEF for the implementation based on the cycle-chain and the position-indexed chain-edge formulations, respectively.
These cutting plane approaches are compared with the branch-and-bound approach by~\cite{Glorie20} on practical instances for the Full Recourse setting.
In this section, we investigate the following questions:
\medskip

{\noindent\bfseries Full Recourse:}
\begin{questions}
    \item \label{Q1} Are cut-CC and/or cut-PICEF computationally more efficient than
      the branch-and-bound approach, both with respect to computation times and the number of solved instances?
    \item \label{Q2} Does cut-PICEF outperform cut-CC or vice versa?
\end{questions}

{\noindent\bfseries FSE:}
\begin{questions}
    \item \label{Q3} Are cut-CC and/or cut-PICEF computationally tractable methods for solving the robust kidney exchange problem whenever recourse is allowed according to the FSE policy?
    \item \label{Q4} Does cut-PICEF outperform cut-CC or vice versa?
    \item \label{Q5} Are the robust objective values under the FSE setting comparable to the values for Full Recourse, \ie how large is the loss in the guaranteed number of transplants by imposing FSE constraints on recourse solutions?
\end{questions}

In the following, we first discuss our computational setup and used test sets
in \Cref{sec:comp-setup}.
Afterward, we present a comparison of the cutting plane algorithms against the branch-and-bound scheme in \Cref{sec:method-comparison-FR}.
Moreover, we explore the tractability of instances under the FSE policy by cut-CC and cut-PICEF in \Cref{sec:results-FSE}.
We finish by discussing the impact of our cut lifting techniques in~\Cref{sec:impact-lifting}.

\subsection{Test sets and computational setup}
\label{sec:comp-setup}

In our experiments, we have used the same test set as~\cite{Glorie20}, which is publicly available\footnote{\url{https://rdm.inesctec.pt/dataset/ii-2020-001}}. 
This test set consists of ninety graphs of which thirty contain~20, 50, and~100 vertices, respectively.
More characteristics of these graphs can be found in~\cite{Glorie20}.
From these graphs, we generate different instances of the full recourse robust kidney exchange problem by bounding the length~\cyclen
and~\chainlen of the considered cycles and chains, respectively.

To allow for a fair comparison, we have implemented both our new methods and the reference method in \solver{C/C++} using the mixed-integer programming framework \solver{SCIP~8.0.2} with \solver{SoPlex~6.0.2} as LP solver\footnote{Our implementation is publicly available at \url{https://github.com/DannyBlom/BendersRobustKEP/commit/93fce3fb}}, see~\cite{SCIP8.0}.
All computations were run on a Linux cluster with Intel Xeon Platinum 8260 \SI{2.4}{\GHz} processors.
The code was executed using a single thread.
The time limit of all computations is \SI{3600}{\second} per instance.

For cut-CC and the branch-and-bound method, we compute for each instance all cycles of length at most~\cyclen and chains of length at most~\chainlen present in its compatibility graph.
For cut-PICEF, we again compute the cycles and generate all position-indexed arcs that could potentially be part of a feasible chain,~\ie~each position-indexed arc $(i,j,\ell) \in \P$ for which there exists a path from an NDD to $i$ of length $\ell - 1$ not containing $j$.

In all methods, all cycle variables (and, if applicable, all chain or all position-indexed arc variables) are added to the initial model.
Column generation techniques exist for these models (see~\cite{Glorie2014} and~\cite{plaut2016fast} among others), but these are not efficient for the smaller chain and cycle sizes we study (\cite{Dickerson2016}).
In the branch-and-bound method, there are some degrees of freedom in selecting the branching strategy.
To be able to produce results as consistent as possible with the ones described in~\cite{Glorie20}, we have implemented the same strategies as discussed there.
A summary of these strategies can be found in \Cref{sec:appendix-bandb}.
When comparing with the branch-and-bound scheme of~\cite{Glorie20}, we consider the most basic implementation of cut-CC and cut-PICEF, without lifting.  
We also conducted experiments with the cut lifting methods described in~\Cref{subsubsec:CC:recourse_problem} and~\Cref{subsubsec:PICEF:recourse_problem} enabled, which we will discuss in~\Cref{sec:impact-lifting} in more detail.
%\BS{Track changes shows quite a bit of difference, but mostly re-arranging existing lines.}

\subsection{Comparison of solution methods for Full Recourse}
\label{sec:method-comparison-FR}
To empirically find answers to Questions~\ref{Q1} and~\ref{Q2}, we have conducted experiments with varying maximum cycle and chain lengths $\cyclen$ and $\chainlen$ for the compatibility graphs described in \Cref{sec:comp-setup}.
The maximum cycle length in our experiments takes value~$\cyclen \in \{3,4\}$; the maximum chain length is \mbox{$\chainlen \in \{2,3,4\}$}.
We allow to attack between~1 and~4 vertices per instance in our experiments,~\ie~the attack budget $B \in \set{1,2,3,4}$; attacks of arcs are not taken into account, which is consistent with the experiments by \cite{Glorie20}.

In comparison to the experiments described in \cite{Glorie20}, we consider the impact of varying cycle lengths, while they only considered~$\cyclen = 3$. We do include $\cyclen = 4$, since multiple European countries allow cycles of this size,~\eg~the Czech Republic and The Netherlands, see~\cite{biro2020}.
In order to be concise, we only report the tables for~$\cyclen = 3$ in the main text, as the computational results for~$\cyclen = 4$ depict a similar comparison between methods.
The results for~$\cyclen=4$ are presented in~\Cref{sec:appendix-compresults-K=4}.
The different chain lengths in our experiments are the same as in \cite{Glorie20} (note, however, that they measure the length of a chain by its number of vertices instead of arcs, i.e., the numerical values are shifted by~1 compared to their article).
\Cref{tab:cyc3-policy1} reports on our experiments for~$\cyclen = 3$ and different $\chainlen$ and $B$ with respect to the Full Recourse policy.
%\DBil{ We have settled to put these tables either into an appendix or online supplement such as a public Git repository (BendersRobustKEP)}

% We insert the table for comparing the three different methods (Branch-and-bound, Cut-CC, and Cut-PICEF) here
% generated by paper_generate_table.sh 3 s 1
\afterpage{%
  \clearpage% 
  \begin{landscape}
    \begin{table}[htbp]
      \begin{tiny}
        
        \label{tab:cyc3-policy1}
        \begin{tabular*}{\linewidth}{@{}l@{\;\;\extracolsep{\fill}}rrrrrrrrrrrrrrrrr@{}}\toprule
           & \multicolumn{5}{c}{Branch-and-Bound} & \multicolumn{6}{c}{cut-CC} & \multicolumn{6}{c}{cut-PICEF}\\
          \cmidrule{2-6} \cmidrule{7-12} \cmidrule{13-18}
          & & \multicolumn{2}{c}{time} & & & & \multicolumn{3}{c}{time} & & & & \multicolumn{3}{c}{time} & & \\
          \cmidrule{3-4} \cmidrule{8-10} \cmidrule{14-16}
          \#vertices & \#opt & total & stage 2 & \#att. & \#B\&B-nodes & \#opt & total & stage 2 & stage 3 & \#att. & \#sub & \#opt & total & stage 2 & stage 3 & \#att. & \#sub \\
          \midrule
          \multicolumn{18}{@{}l}{chain length: 2}\\
          \multicolumn{18}{@{}l}{$B = 1$}\\
           20 &  \num{ 30} & \num{ 0.06} & \SI{30.55}{\percent} & \num{ 1.9} & \num{ 11.4}  &  \num{ 30} & \num{ 0.08} & \SI{15.67}{\percent} & \SI{19.56}{\percent} & \num{ 2.0} & \num{ 6.5}  &  \num{ 30} & \num{ 0.08} & \SI{21.96}{\percent} & \SI{15.64}{\percent} & \num{ 1.9} & \num{ 6.1} \\
           50 &  \num{ 30} & \num{ 3.25} & \SI{30.27}{\percent} & \num{ 3.4} & \num{ 37.0}  &  \num{ 30} & \num{ 2.88} & \SI{12.40}{\percent} & \SI{20.81}{\percent} & \num{ 3.4} & \num{ 17.1}  &  \num{ 30} & \num{ 3.18} & \SI{11.68}{\percent} & \SI{22.63}{\percent} & \num{ 3.1} & \num{ 14.9} \\
          100 &  \num{ 30} & \num{ 34.93} & \SI{34.44}{\percent} & \num{ 3.3} & \num{ 66.9}  &  \num{ 30} & \num{ 37.47} & \SI{7.98}{\percent} & \SI{25.18}{\percent} & \num{ 3.4} & \num{ 28.7}  &  \num{ 30} & \num{ 43.32} & \SI{8.55}{\percent} & \SI{31.72}{\percent} & \num{ 3.6} & \num{ 31.1} \\
          \multicolumn{18}{@{}l}{$B = 2$}\\
           20 &  \num{ 30} & \num{ 0.21} & \SI{55.94}{\percent} & \num{ 2.8} & \num{ 56.1}  &  \num{ 30} & \num{ 0.15} & \SI{30.15}{\percent} & \SI{16.11}{\percent} & \num{ 3.0} & \num{ 10.8}  &  \num{ 30} & \num{ 0.16} & \SI{35.88}{\percent} & \SI{13.95}{\percent} & \num{ 2.6} & \num{ 10.4} \\
           50 &  \num{ 30} & \num{ 17.25} & \SI{65.49}{\percent} & \num{ 5.5} & \num{ 519.8}  &  \num{ 30} & \num{ 8.42} & \SI{26.77}{\percent} & \SI{22.21}{\percent} & \num{ 4.8} & \num{ 60.8}  &  \num{ 30} & \num{ 9.54} & \SI{26.18}{\percent} & \SI{19.87}{\percent} & \num{ 4.8} & \num{ 52.3} \\
          100 &  \num{ 29} & \num{ 338.22} & \SI{77.93}{\percent} & \num{ 5.4} & \num{ 1752.7}  &  \num{ 30} & \num{ 102.18} & \SI{19.79}{\percent} & \SI{30.22}{\percent} & \num{ 4.9} & \num{ 119.5}  &  \num{ 30} & \num{ 137.89} & \SI{20.23}{\percent} & \SI{29.31}{\percent} & \num{ 5.5} & \num{ 120.2} \\
          \multicolumn{18}{@{}l}{$B = 3$}\\
           20 &  \num{ 30} & \num{ 0.61} & \SI{58.58}{\percent} & \num{ 3.6} & \num{ 213.9}  &  \num{ 30} & \num{ 0.18} & \SI{28.96}{\percent} & \SI{16.26}{\percent} & \num{ 3.2} & \num{ 10.9}  &  \num{ 30} & \num{ 0.16} & \SI{35.42}{\percent} & \SI{20.68}{\percent} & \num{ 3.0} & \num{ 10.8} \\
           50 &  \num{ 30} & \num{ 104.00} & \SI{85.81}{\percent} & \num{ 8.8} & \num{ 6067.7}  &  \num{ 30} & \num{ 17.82} & \SI{39.87}{\percent} & \SI{17.06}{\percent} & \num{ 6.1} & \num{ 126.6}  &  \num{ 30} & \num{ 25.55} & \SI{48.76}{\percent} & \SI{16.67}{\percent} & \num{ 6.1} & \num{ 138.5} \\
          100 &  \num{ 7} & \num{2682.88} & \SI{93.26}{\percent} & \num{ 8.4} & \num{12369.4}  &  \num{ 26} & \num{ 396.43} & \SI{37.39}{\percent} & \SI{28.33}{\percent} & \num{ 6.4} & \num{ 308.8}  &  \num{ 25} & \num{ 518.40} & \SI{40.29}{\percent} & \SI{25.17}{\percent} & \num{ 7.2} & \num{ 247.8} \\
          \multicolumn{18}{@{}l}{$B = 4$}\\
           20 &  \num{ 30} & \num{ 1.12} & \SI{62.20}{\percent} & \num{ 3.9} & \num{ 514.8}  &  \num{ 30} & \num{ 0.17} & \SI{32.37}{\percent} & \SI{14.32}{\percent} & \num{ 3.7} & \num{ 11.8}  &  \num{ 30} & \num{ 0.13} & \SI{33.35}{\percent} & \SI{17.83}{\percent} & \num{ 3.4} & \num{ 10.0} \\
           50 &  \num{ 28} & \num{ 492.07} & \SI{97.34}{\percent} & \num{10.0} & \num{43440.6}  &  \num{ 30} & \num{ 42.06} & \SI{61.14}{\percent} & \SI{13.83}{\percent} & \num{ 6.9} & \num{ 248.0}  &  \num{ 28} & \num{ 59.73} & \SI{57.61}{\percent} & \SI{10.67}{\percent} & \num{ 6.6} & \num{ 204.8} \\
          100 &  \num{ 0} & \num{3600.00} & \SI{91.21}{\percent} & --- & ---  &  \num{ 21} & \num{ 783.06} & \SI{50.85}{\percent} & \SI{19.76}{\percent} & \num{ 7.9} & \num{ 411.7}  &  \num{ 18} & \num{1254.98} & \SI{59.81}{\percent} & \SI{19.31}{\percent} & \num{ 8.4} & \num{ 396.8} \\
          \midrule
          \multicolumn{18}{@{}l}{chain length: 3}\\
          \multicolumn{18}{@{}l}{$B = 1$}\\
           20 &  \num{ 30} & \num{ 0.19} & \SI{32.40}{\percent} & \num{ 1.9} & \num{ 13.8}  &  \num{ 30} & \num{ 0.17} & \SI{25.70}{\percent} & \SI{16.20}{\percent} & \num{ 2.0} & \num{ 6.3}  &  \num{ 30} & \num{ 0.14} & \SI{30.30}{\percent} & \SI{19.43}{\percent} & \num{ 1.8} & \num{ 5.7} \\
           50 &  \num{ 30} & \num{ 18.20} & \SI{26.89}{\percent} & \num{ 3.6} & \num{ 44.3}  &  \num{ 30} & \num{ 14.81} & \SI{8.78}{\percent} & \SI{19.01}{\percent} & \num{ 3.3} & \num{ 18.9}  &  \num{ 30} & \num{ 5.57} & \SI{10.35}{\percent} & \SI{29.30}{\percent} & \num{ 3.0} & \num{ 13.7} \\
          100 &  \num{ 26} & \num{ 549.15} & \SI{40.93}{\percent} & \num{ 3.2} & \num{ 75.7}  &  \num{ 25} & \num{ 597.63} & \SI{6.26}{\percent} & \SI{28.83}{\percent} & \num{ 3.5} & \num{ 33.0}  &  \num{ 30} & \num{ 114.01} & \SI{8.76}{\percent} & \SI{36.75}{\percent} & \num{ 4.0} & \num{ 30.8} \\
          \multicolumn{18}{@{}l}{$B = 2$}\\
           20 &  \num{ 30} & \num{ 0.37} & \SI{50.04}{\percent} & \num{ 2.8} & \num{ 60.9}  &  \num{ 30} & \num{ 0.28} & \SI{30.41}{\percent} & \SI{17.93}{\percent} & \num{ 2.8} & \num{ 13.1}  &  \num{ 30} & \num{ 0.31} & \SI{22.48}{\percent} & \SI{19.08}{\percent} & \num{ 2.8} & \num{ 11.0} \\
           50 &  \num{ 30} & \num{ 75.92} & \SI{55.54}{\percent} & \num{ 6.2} & \num{ 651.4}  &  \num{ 30} & \num{ 32.13} & \SI{16.95}{\percent} & \SI{23.45}{\percent} & \num{ 5.1} & \num{ 68.2}  &  \num{ 30} & \num{ 18.69} & \SI{17.94}{\percent} & \SI{34.13}{\percent} & \num{ 4.6} & \num{ 55.5} \\
          100 &  \num{ 7} & \num{2630.62} & \SI{62.15}{\percent} & \num{ 5.7} & \num{ 1406.4}  &  \num{ 18} & \num{1415.26} & \SI{6.39}{\percent} & \SI{27.47}{\percent} & \num{ 5.9} & \num{ 89.3}  &  \num{ 29} & \num{ 303.46} & \SI{14.41}{\percent} & \SI{48.09}{\percent} & \num{ 4.9} & \num{ 120.0} \\
          \multicolumn{18}{@{}l}{$B = 3$}\\
           20 &  \num{ 30} & \num{ 1.03} & \SI{56.80}{\percent} & \num{ 3.5} & \num{ 251.5}  &  \num{ 30} & \num{ 0.43} & \SI{27.18}{\percent} & \SI{12.65}{\percent} & \num{ 3.6} & \num{ 12.7}  &  \num{ 30} & \num{ 0.28} & \SI{30.12}{\percent} & \SI{14.59}{\percent} & \num{ 3.1} & \num{ 11.1} \\
           50 &  \num{ 28} & \num{ 382.37} & \SI{83.40}{\percent} & \num{ 8.3} & \num{ 7286.9}  &  \num{ 30} & \num{ 52.07} & \SI{30.26}{\percent} & \SI{22.88}{\percent} & \num{ 6.2} & \num{ 153.4}  &  \num{ 30} & \num{ 42.75} & \SI{35.71}{\percent} & \SI{28.34}{\percent} & \num{ 5.5} & \num{ 135.1} \\
          100 &  \num{ 1} & \num{3510.61} & \SI{54.65}{\percent} & \num{ 8.0} & \num{11784.0}  &  \num{ 11} & \num{2093.21} & \SI{10.21}{\percent} & \SI{41.77}{\percent} & \num{ 5.7} & \num{ 200.7}  &  \num{ 21} & \num{1262.74} & \SI{25.70}{\percent} & \SI{54.60}{\percent} & \num{ 6.8} & \num{ 268.2} \\
          \multicolumn{18}{@{}l}{$B = 4$}\\
           20 &  \num{ 30} & \num{ 2.13} & \SI{65.14}{\percent} & \num{ 4.0} & \num{ 762.8}  &  \num{ 30} & \num{ 0.20} & \SI{29.54}{\percent} & \SI{13.61}{\percent} & \num{ 3.4} & \num{ 11.2}  &  \num{ 30} & \num{ 0.30} & \SI{38.98}{\percent} & \SI{10.52}{\percent} & \num{ 3.8} & \num{ 12.2} \\
           50 &  \num{ 16} & \num{1493.15} & \SI{95.55}{\percent} & \num{10.9} & \num{41147.7}  &  \num{ 29} & \num{ 103.31} & \SI{42.99}{\percent} & \SI{18.11}{\percent} & \num{ 7.2} & \num{ 265.9}  &  \num{ 28} & \num{ 75.70} & \SI{49.02}{\percent} & \SI{21.11}{\percent} & \num{ 7.4} & \num{ 185.8} \\
          100 &  \num{ 0} & \num{3600.00} & \SI{58.88}{\percent} & --- & ---  &  \num{ 6} & \num{2956.89} & \SI{15.72}{\percent} & \SI{33.07}{\percent} & \num{ 8.3} & \num{ 540.0}  &  \num{ 13} & \num{2140.89} & \SI{36.24}{\percent} & \SI{44.04}{\percent} & \num{ 8.9} & \num{ 364.5} \\
          \midrule
          \multicolumn{18}{@{}l}{chain length: 4}\\
          \multicolumn{18}{@{}l}{$B = 1$}\\
           20 &  \num{ 30} & \num{ 0.44} & \SI{19.38}{\percent} & \num{ 1.8} & \num{ 14.2}  &  \num{ 30} & \num{ 0.58} & \SI{24.81}{\percent} & \SI{11.99}{\percent} & \num{ 2.1} & \num{ 6.2}  &  \num{ 30} & \num{ 0.23} & \SI{24.25}{\percent} & \SI{23.81}{\percent} & \num{ 1.8} & \num{ 6.0} \\
           50 &  \num{ 30} & \num{ 142.77} & \SI{28.28}{\percent} & \num{ 3.0} & \num{ 51.9}  &  \num{ 30} & \num{ 196.04} & \SI{5.21}{\percent} & \SI{17.33}{\percent} & \num{ 3.8} & \num{ 23.4}  &  \num{ 30} & \num{ 16.06} & \SI{8.22}{\percent} & \SI{36.91}{\percent} & \num{ 3.1} & \num{ 18.5} \\
          100 &  \num{ 3} & \num{3042.85} & \SI{20.15}{\percent} & \num{ 2.3} & \num{ 79.7}  &  \num{ 3} & \num{3283.32} & \SI{10.86}{\percent} & \SI{6.29}{\percent} & \num{ 3.7} & \num{ 34.3}  &  \num{ 30} & \num{ 195.42} & \SI{7.60}{\percent} & \SI{53.41}{\percent} & \num{ 3.4} & \num{ 39.3} \\
          \multicolumn{18}{@{}l}{$B = 2$}\\
           20 &  \num{ 30} & \num{ 1.13} & \SI{45.74}{\percent} & \num{ 3.0} & \num{ 92.1}  &  \num{ 30} & \num{ 0.91} & \SI{24.61}{\percent} & \SI{12.69}{\percent} & \num{ 2.8} & \num{ 13.5}  &  \num{ 30} & \num{ 0.50} & \SI{24.01}{\percent} & \SI{12.87}{\percent} & \num{ 2.9} & \num{ 12.3} \\
           50 &  \num{ 25} & \num{ 505.81} & \SI{59.78}{\percent} & \num{ 5.6} & \num{ 726.4}  &  \num{ 29} & \num{ 321.98} & \SI{10.08}{\percent} & \SI{29.94}{\percent} & \num{ 4.9} & \num{ 95.7}  &  \num{ 30} & \num{ 41.36} & \SI{18.89}{\percent} & \SI{43.99}{\percent} & \num{ 4.2} & \num{ 77.2} \\
          100 &  \num{ 0} & \num{3600.00} & \SI{11.76}{\percent} & --- & ---  &  \num{ 1} & \num{3477.83} & \SI{9.58}{\percent} & \SI{6.12}{\percent} & \num{ 7.0} & \num{ 56.0}  &  \num{ 27} & \num{ 674.06} & \SI{14.31}{\percent} & \SI{56.58}{\percent} & \num{ 4.9} & \num{ 160.7} \\
          \multicolumn{18}{@{}l}{$B = 3$}\\
           20 &  \num{ 30} & \num{ 2.10} & \SI{54.68}{\percent} & \num{ 3.7} & \num{ 250.6}  &  \num{ 30} & \num{ 0.90} & \SI{20.47}{\percent} & \SI{13.84}{\percent} & \num{ 3.2} & \num{ 12.4}  &  \num{ 30} & \num{ 0.52} & \SI{37.43}{\percent} & \SI{13.49}{\percent} & \num{ 3.1} & \num{ 13.3} \\
           50 &  \num{ 16} & \num{1369.98} & \SI{80.02}{\percent} & \num{ 9.4} & \num{ 6808.6}  &  \num{ 27} & \num{ 423.88} & \SI{15.23}{\percent} & \SI{27.43}{\percent} & \num{ 6.6} & \num{ 164.7}  &  \num{ 30} & \num{ 79.13} & \SI{29.21}{\percent} & \SI{35.99}{\percent} & \num{ 6.0} & \num{ 153.9} \\
          100 &  \num{ 0} & \num{3600.00} & \SI{10.30}{\percent} & --- & ---  &  \num{ 0} & \num{3600.00} & \SI{4.71}{\percent} & \SI{5.72}{\percent} & --- & ---  &  \num{ 18} & \num{1908.69} & \SI{21.57}{\percent} & \SI{58.79}{\percent} & \num{ 6.9} & \num{ 249.3} \\
          \multicolumn{18}{@{}l}{$B = 4$}\\
           20 &  \num{ 30} & \num{ 3.88} & \SI{59.30}{\percent} & \num{ 4.5} & \num{ 959.4}  &  \num{ 30} & \num{ 0.84} & \SI{25.28}{\percent} & \SI{8.16}{\percent} & \num{ 3.6} & \num{ 13.3}  &  \num{ 30} & \num{ 0.39} & \SI{28.25}{\percent} & \SI{16.86}{\percent} & \num{ 3.7} & \num{ 11.7} \\
           50 &  \num{ 8} & \num{2442.50} & \SI{87.49}{\percent} & \num{10.4} & \num{38666.1}  &  \num{ 25} & \num{ 563.46} & \SI{22.20}{\percent} & \SI{25.68}{\percent} & \num{ 7.8} & \num{ 227.4}  &  \num{ 27} & \num{ 134.69} & \SI{42.51}{\percent} & \SI{25.01}{\percent} & \num{ 7.2} & \num{ 193.7} \\
          100 &  \num{ 0} & \num{3600.00} & \SI{9.46}{\percent} & --- & ---  &  \num{ 0} & \num{3600.00} & \SI{4.34}{\percent} & \SI{1.94}{\percent} & --- & ---  &  \num{ 6} & \num{3083.76} & \SI{32.60}{\percent} & \SI{50.54}{\percent} & \num{ 7.8} & \num{ 321.0} \\
          \bottomrule
        \end{tabular*}
        \caption{Comparison of the different methods for maximum cycle length 3 and different chain lengths, for the Full Recourse policy.}
      \end{tiny}
    \end{table}
  \end{landscape}
}

Before we proceed with a discussion of our experiments, we describe the
common structure of the presented tables.
All mean values are given in arithmetic mean with the only exception
being the average solving time, which is measured in shifted geometric mean
\[
  \prod_{i = 1}^n (t_i + s)^{1/n} - s,
\]
where we use a shift of~$s = 10$.
The reason for the latter is to reduce the impact of outliers and instances
with a very small running time.

Column ``\#vertices'' reports on the number of vertices in the tested graphs,
whereas column ``\#opt'' provides the number of instances
solved within the time limit.
The mean solution time per instance (in seconds) is given in ``time total''.
For cut-CC and cut-PICEF, the columns ``stage~2'' and ``stage~3'' present the average proportion of running time spent in solving attacker-defender subproblems and recourse problems, respectively.
For the branch-and-bound method, ``stage 2'' represents the problem of solving the attacker-defender subproblem with the branch-and-bound algorithm.
Note that all reported times are in seconds.
Column ``\#att.'' reports on the mean number of generated attacks per
solved instance; if no instance from a test set could be solved within the
time limit, the corresponding entry is ``---''.
Finally, column ``\#B\&B-nodes'' gives the mean number of
branch-and-bound nodes per solved instance for the
branch-and-bound method.
For cut-CC and cut-PICEF, the column ``\#sub'' shows the mean number of recourse problems that needed to be solved per instance solved within the time limit.

In our experiments for maximum cycle length~$\cyclen = 3$, we observe that all methods can solve all instances based on graphs with~20 vertices within a matter of seconds.
For graphs with~50 vertices, however, there exist instances that the branch-and-bound method fails to solve within the time limit, for~$\chainlen = 2$ and attack budget~$B = 4$; for larger maximum chain lengths,~\ie~$\chainlen \in \set{3,4}$, there already exist instances that could not solved for which the attack budget~$B$ equals~3 or~2, respectively.
In contrast to this, cut-CC could solve all instances with up to~50 vertices within the time limit whenever~$\chainlen \in \set{2,3}$, except for one instance.
For~$\chainlen = 4$, cut-CC can still solve over \SI{96}{\percent} of these instances within one hour.
For cut-PICEF, there are slightly more (4) instances with at most 50 vertices and $\chainlen \in \{2,3\}$ that could not be solved within the time limit.
However, for $\chainlen = 4$, almost \SI{99}{\percent} of these instances could still be solved within the time limit.
Furthermore, the instances that could not be solved were based on $B = 4$ exclusively.
% This improvement is even more pronounced for cut-PICEF, which is able to solve all instances with up to~50 vertices except a few instances with~$\chainlen \in \set{3,4}$ and~$B = 4$. \BSil{Reporting on \{2,3\} for cut-CC and \{3,4\} for cut-PICEF makes the comparison between the two harder. Is there a clean fix?}
% This is also reflected in the average running times of the different methods, for the various parameter settings, as can be seen in~\Cref{tab:cyc3-policy1}.

For instances with~100 vertices in the compatibility graph, the branch-and-bound method is hardly able to solve any instance if $B \ge 3$ and~$\chainlen = 2$; 
for increasing maximum chain lengths~$\chainlen$, even instances with $B \in \{1,2\}$ become challenging for the branch-and-bound approach.
Whenever $\chainlen = 2$, the cut-CC approach is superior as it allows solving more instances than cut-PICEF for all possible attack budgets $B$.
The opposite holds whenever~$\chainlen \ge 3$.
% also cut-CC fails to solve more than~3 instances per test set.
% Now, for instances with 100 vertices, the benefit of cut-PICEF becomes most apparent, especially for larger maximum chain lengths.
The benefit of cut-PICEF over cut-CC and branch-and-bound is most apparent for~$\chainlen = 4$.
In this case, cut-PICEF succeeds in solving over two third of the number of instances with 100 vertices, whereas both cut-CC and branch-and-bound could only solve 4 and 3 out of the 120 instances, respectively.
% Where both the branch-and-bound method and cut-CC fail to solve most\BS{Imprecise} instances for~$\chainlen = 4$, cut-PICEF is able to solve more than half\BSil{Fail to solve most, able to solve more than half, hard to compare} of the instances for all attack budgets~$B \in \set{1,2,3}$.
% \DBil{Misschien is het beter om percentages te even in plaats van vage termen als "more than half", "fails to solve most instances"}
% Furthermore, cut-PICEF is able to solve many more instances within the time limit compared to both branch-and-bound and cut-CC.

Based on these experiments, we can answer Question~\ref{Q1} affirmatively as the cutting plane approaches are able to consistently solve more instances than the branch-and-bound method within the time limit.
Specifically, we have accomplished a running time improvement of one order of magnitude for most instance sets.
Furthermore, regarding Question~\ref{Q2}, comparing the two cutting plane methods with each other, there is a transition related to the choice of~$\chainlen$ in which cut-PICEF starts outperforming cut-CC.
Whenever the maximum chain length~$\chainlen$ is small, there exists an overhead in the formulation size for cut-PICEF causing it to be slower than cut-CC, whereas this comparison flips in case~$\chainlen$ increases. 
These conclusions are also supported by our experiments with maximum cycle length~$\cyclen = 4$, as can be seen in~\Cref{tab:cyc4-policy1}.
This behaviour is expected, due to the advantages in model size of PICEF with longer chain lengths. Furthermore, the advantage of PICEF's stronger cuts (Theorems ~\ref{thm:FR:strength_comparison} and~\ref{thm:FSE:strength_comparison}) for the attacker-defender subproblem will grow as the chain length increases.
%One reason for this behavior is that the number of variables in the cycle-chain formulation depends exponentially on the maximum chain and cycle length, leading to already millions of chains for the graphs in our test set with~100 vertices and~$\chainlen = 4$.
%The impact of the maximum chain length~\chainlen on the number of variables in PICEF is only linear, as there are $O(\chainlen|A|)$ position-indexed arc variables, whereas the number of chains is exponential in $\chainlen$.

%A second reason is that the cuts separated for the attacker-defender subproblem are stronger when applied to a PICEF-based formulation, as shown in Theorems~\ref{thm:FR:strength_comparison} and~\ref{thm:FSE:strength_comparison}.
%We noted that for the same set of recourse solutions, the set of the corresponding interdiction cuts is stronger for cut-PICEF than for cut-CC.
%In general, this means that we need to separate fewer interdiction cuts to solve the PICEF-based attacker-defender subproblem as for the CC-based one, leading to a faster attack generation algorithm.
%This difference in strength is most apparent in the case that large chains are considered, as for given attacks, the interdiction cut for cut-PICEF can account for large subchains in case attacked vertices are found only further down the chains.

Observe that this is also reflected in our computational results. For most parameter settings, the average number of subproblems solved per master iteration is higher for cut-CC compared to cut-PICEF, even when $\chainlen = 2$.
However, we also observe substantial variability; on an instance level, there are many examples of cut-CC requiring fewer iterations than cut-PICEF. This is possible as the recourse problems have multiple optima and the solvers for cut-CC and cut-PICEF might determine different recourse solutions,~\ie~leading to different interdiction cuts.
% \BSil{This information was already given before. Reason for repeating it here?}

\begin{remark}
  When comparing the number of generated attacks for the branch-and-bound method with the numbers reported by~\cite{Glorie20}, we observe that our numbers are higher in general.
  Although we tried to implement the method as close as possible to the implementation of~\cite{Glorie20}, cf.~\Cref{sec:appendix-bandb}, there are still factors that might negatively impact the number of generated attacks in our implementation.
  Among others, the order of cycles/chains might be different in the two implementations, which might cause to find different optimal solutions in each iteration, and the MIP solver used in our experiments is different to the one used by~\cite{Glorie20}.
  Thus, since these factors impact both the branch-and-bound and cutting plane algorithms equally, our implementation does not give an advantage to either of these methods.
\end{remark}
% \CHil{Here's a suggestion. We can also drop the last sentence.}

% generated by paper_generate_table.sh 3 s 2
% \afterpage{%
  % \clearpage% 
  \begin{landscape}
\begin{table}[htbp]
  \begin{tiny}
    
    \label{tab:cyc3-policy2}
    \begin{tabular*}{\linewidth}{@{}l@{\;\;\extracolsep{\fill}}rrrrrrrrrrrr@{}}\toprule
       & \multicolumn{6}{c}{cut-CC} & \multicolumn{6}{c}{cut-PICEF}\\
      \cmidrule{2-7} \cmidrule{8-13}
      & & \multicolumn{3}{c}{time} & & & & \multicolumn{3}{c}{time} & & \\
      \cmidrule{3-5} \cmidrule{9-11}
      \#vertices & \#opt & total & stage 2 & stage 3 & \#att. & \#sub & \#opt & total & stage 2 & stage 3 & \#att. & \#sub \\
      \midrule
      \multicolumn{13}{@{}l}{chain length: 2}\\
      \multicolumn{13}{@{}l}{$B = 1$}\\
       20 &  \num{ 30} & \num{ 0.24} & \SI{31.49}{\percent} & \SI{4.67}{\percent} & \num{ 2.6} & \num{ 6.1}  &  \num{ 30} & \num{ 0.29} & \SI{13.69}{\percent} & \SI{10.64}{\percent} & \num{ 2.5} & \num{ 4.8} \\
       50 &  \num{ 30} & \num{ 8.23} & \SI{28.81}{\percent} & \SI{1.50}{\percent} & \num{ 5.2} & \num{ 17.0}  &  \num{ 29} & \num{ 33.57} & \SI{1.96}{\percent} & \SI{0.51}{\percent} & \num{ 4.5} & \num{ 10.6} \\
      100 &  \num{ 26} & \num{ 187.88} & \SI{30.48}{\percent} & \SI{0.54}{\percent} & \num{ 6.5} & \num{ 22.8}  &  \num{ 25} & \num{ 661.56} & \SI{0.34}{\percent} & \SI{0.05}{\percent} & \num{ 4.9} & \num{ 12.4} \\
      \multicolumn{13}{@{}l}{$B = 2$}\\
       20 &  \num{ 30} & \num{ 0.55} & \SI{33.79}{\percent} & \SI{10.00}{\percent} & \num{ 3.6} & \num{ 11.3}  &  \num{ 30} & \num{ 0.73} & \SI{17.90}{\percent} & \SI{10.76}{\percent} & \num{ 3.5} & \num{ 10.5} \\
       50 &  \num{ 29} & \num{ 49.68} & \SI{17.26}{\percent} & \SI{1.72}{\percent} & \num{ 9.4} & \num{ 65.4}  &  \num{ 27} & \num{ 125.42} & \SI{3.61}{\percent} & \SI{0.77}{\percent} & \num{ 8.1} & \num{ 41.9} \\
      100 &  \num{ 11} & \num{1733.28} & \SI{4.35}{\percent} & \SI{0.51}{\percent} & \num{11.3} & \num{ 102.7}  &  \num{ 9} & \num{2239.02} & \SI{0.15}{\percent} & \SI{0.05}{\percent} & \num{ 7.3} & \num{ 34.3} \\
      \multicolumn{13}{@{}l}{$B = 3$}\\
       20 &  \num{ 30} & \num{ 1.00} & \SI{47.74}{\percent} & \SI{14.46}{\percent} & \num{ 3.8} & \num{ 19.0}  &  \num{ 30} & \num{ 0.93} & \SI{24.09}{\percent} & \SI{15.57}{\percent} & \num{ 4.0} & \num{ 16.0} \\
       50 &  \num{ 27} & \num{ 235.93} & \SI{23.44}{\percent} & \SI{1.60}{\percent} & \num{14.3} & \num{ 157.2}  &  \num{ 22} & \num{ 401.65} & \SI{5.35}{\percent} & \SI{1.11}{\percent} & \num{11.3} & \num{ 91.6} \\
      100 &  \num{ 4} & \num{2808.18} & \SI{6.33}{\percent} & \SI{0.56}{\percent} & \num{14.5} & \num{ 159.0}  &  \num{ 2} & \num{3445.56} & \SI{0.10}{\percent} & \SI{2.59}{\percent} & \num{16.0} & \num{ 88.5} \\
      \multicolumn{13}{@{}l}{$B = 4$}\\
       20 &  \num{ 30} & \num{ 0.35} & \SI{42.87}{\percent} & \SI{13.45}{\percent} & \num{ 3.6} & \num{ 10.8}  &  \num{ 30} & \num{ 0.76} & \SI{38.74}{\percent} & \SI{13.75}{\percent} & \num{ 3.5} & \num{ 14.3} \\
       50 &  \num{ 21} & \num{ 596.78} & \SI{28.85}{\percent} & \SI{1.54}{\percent} & \num{17.0} & \num{ 358.1}  &  \num{ 17} & \num{ 830.07} & \SI{6.13}{\percent} & \SI{0.58}{\percent} & \num{15.3} & \num{ 185.9} \\
      100 &  \num{ 0} & \num{3600.00} & \SI{4.76}{\percent} & \SI{0.33}{\percent} & --- & ---  &  \num{ 0} & \num{3600.00} & \SI{0.09}{\percent} & \SI{0.03}{\percent} & --- & --- \\
      \midrule
      \multicolumn{13}{@{}l}{chain length: 3}\\
      \multicolumn{13}{@{}l}{$B = 1$}\\
       20 &  \num{ 30} & \num{ 0.86} & \SI{35.61}{\percent} & \SI{5.26}{\percent} & \num{ 2.7} & \num{ 6.9}  &  \num{ 30} & \num{ 1.50} & \SI{14.69}{\percent} & \SI{9.10}{\percent} & \num{ 2.9} & \num{ 5.8} \\
       50 &  \num{ 28} & \num{ 98.70} & \SI{23.32}{\percent} & \SI{0.66}{\percent} & \num{ 5.6} & \num{ 21.0}  &  \num{ 28} & \num{ 98.75} & \SI{0.94}{\percent} & \SI{0.15}{\percent} & \num{ 4.8} & \num{ 12.2} \\
      100 &  \num{ 13} & \num{2198.62} & \SI{32.36}{\percent} & \SI{0.69}{\percent} & \num{ 5.6} & \num{ 19.9}  &  \num{ 14} & \num{1169.76} & \SI{0.19}{\percent} & \SI{0.03}{\percent} & \num{ 3.9} & \num{ 8.6} \\
      \multicolumn{13}{@{}l}{$B = 2$}\\
       20 &  \num{ 30} & \num{ 2.18} & \SI{31.51}{\percent} & \SI{11.82}{\percent} & \num{ 4.2} & \num{ 14.9}  &  \num{ 30} & \num{ 1.87} & \SI{15.34}{\percent} & \SI{9.36}{\percent} & \num{ 3.8} & \num{ 12.4} \\
       50 &  \num{ 24} & \num{ 330.08} & \SI{16.62}{\percent} & \SI{1.16}{\percent} & \num{ 8.9} & \num{ 63.8}  &  \num{ 18} & \num{ 635.58} & \SI{1.04}{\percent} & \SI{0.32}{\percent} & \num{ 7.7} & \num{ 34.9} \\
      100 &  \num{ 2} & \num{3343.52} & \SI{9.90}{\percent} & \SI{0.54}{\percent} & \num{13.0} & \num{ 43.5}  &  \num{ 0} & \num{3600.00} & \SI{0.07}{\percent} & \SI{0.02}{\percent} & --- & --- \\
      \multicolumn{13}{@{}l}{$B = 3$}\\
       20 &  \num{ 30} & \num{ 2.43} & \SI{41.52}{\percent} & \SI{14.90}{\percent} & \num{ 4.0} & \num{ 19.3}  &  \num{ 30} & \num{ 2.60} & \SI{24.62}{\percent} & \SI{8.35}{\percent} & \num{ 4.3} & \num{ 18.7} \\
       50 &  \num{ 17} & \num{1136.03} & \SI{20.11}{\percent} & \SI{1.05}{\percent} & \num{14.8} & \num{ 156.4}  &  \num{ 15} & \num{1210.16} & \SI{0.63}{\percent} & \SI{0.21}{\percent} & \num{11.3} & \num{ 85.6} \\
      100 &  \num{ 2} & \num{3547.29} & \SI{12.34}{\percent} & \SI{0.57}{\percent} & \num{18.0} & \num{ 192.5}  &  \num{ 0} & \num{3600.00} & \SI{0.06}{\percent} & \SI{0.02}{\percent} & --- & --- \\
      \multicolumn{13}{@{}l}{$B = 4$}\\
       20 &  \num{ 30} & \num{ 1.81} & \SI{49.86}{\percent} & \SI{6.00}{\percent} & \num{ 4.0} & \num{ 19.1}  &  \num{ 30} & \num{ 1.04} & \SI{30.14}{\percent} & \SI{13.76}{\percent} & \num{ 3.3} & \num{ 14.0} \\
       50 &  \num{ 12} & \num{1491.58} & \SI{25.46}{\percent} & \SI{0.99}{\percent} & \num{15.2} & \num{ 209.5}  &  \num{ 12} & \num{1783.44} & \SI{0.71}{\percent} & \SI{0.23}{\percent} & \num{17.6} & \num{ 177.4} \\
      100 &  \num{ 1} & \num{3518.60} & \SI{9.54}{\percent} & \SI{0.23}{\percent} & \num{14.0} & \num{ 192.0}  &  \num{ 0} & \num{3600.00} & \SI{0.05}{\percent} & \SI{0.01}{\percent} & --- & --- \\
      \midrule
      \multicolumn{13}{@{}l}{chain length: 4}\\
      \multicolumn{13}{@{}l}{$B = 1$}\\
       20 &  \num{ 30} & \num{ 3.66} & \SI{36.13}{\percent} & \SI{4.14}{\percent} & \num{ 2.8} & \num{ 6.7}  &  \num{ 30} & \num{ 1.31} & \SI{11.75}{\percent} & \SI{5.58}{\percent} & \num{ 2.7} & \num{ 5.2} \\
       50 &  \num{ 20} & \num{ 692.00} & \SI{23.34}{\percent} & \SI{0.69}{\percent} & \num{ 4.7} & \num{ 14.7}  &  \num{ 23} & \num{ 210.85} & \SI{0.77}{\percent} & \SI{0.16}{\percent} & \num{ 4.3} & \num{ 9.5} \\
      100 &  \num{ 1} & \num{3566.63} & \SI{18.65}{\percent} & \SI{0.13}{\percent} & \num{ 5.0} & \num{ 11.0}  &  \num{ 6} & \num{2563.12} & \SI{0.07}{\percent} & \SI{0.01}{\percent} & \num{ 4.0} & \num{ 9.5} \\
      \multicolumn{13}{@{}l}{$B = 2$}\\
       20 &  \num{ 30} & \num{ 6.11} & \SI{30.63}{\percent} & \SI{10.35}{\percent} & \num{ 4.2} & \num{ 17.2}  &  \num{ 30} & \num{ 3.83} & \SI{17.52}{\percent} & \SI{7.20}{\percent} & \num{ 4.2} & \num{ 15.2} \\
       50 &  \num{ 9} & \num{1655.99} & \SI{13.84}{\percent} & \SI{0.65}{\percent} & \num{ 8.3} & \num{ 39.1}  &  \num{ 15} & \num{ 968.64} & \SI{0.48}{\percent} & \SI{0.15}{\percent} & \num{ 7.0} & \num{ 39.4} \\
      100 &  \num{ 0} & \num{3600.00} & \SI{12.00}{\percent} & \SI{0.06}{\percent} & --- & ---  &  \num{ 0} & \num{3600.00} & \SI{0.04}{\percent} & \SI{0.01}{\percent} & --- & --- \\
      \multicolumn{13}{@{}l}{$B = 3$}\\
       20 &  \num{ 30} & \num{ 4.81} & \SI{33.41}{\percent} & \SI{7.84}{\percent} & \num{ 3.9} & \num{ 16.1}  &  \num{ 30} & \num{ 3.67} & \SI{23.60}{\percent} & \SI{8.39}{\percent} & \num{ 4.6} & \num{ 23.0} \\
       50 &  \num{ 6} & \num{2149.74} & \SI{15.14}{\percent} & \SI{0.54}{\percent} & \num{13.3} & \num{ 86.3}  &  \num{ 9} & \num{1738.23} & \SI{0.70}{\percent} & \SI{0.11}{\percent} & \num{11.9} & \num{ 93.7} \\
      100 &  \num{ 0} & \num{3600.00} & \SI{15.93}{\percent} & \SI{0.04}{\percent} & --- & ---  &  \num{ 0} & \num{3600.00} & \SI{0.04}{\percent} & \SI{0.02}{\percent} & --- & --- \\
      \multicolumn{13}{@{}l}{$B = 4$}\\
       20 &  \num{ 30} & \num{ 3.07} & \SI{45.49}{\percent} & \SI{6.24}{\percent} & \num{ 3.5} & \num{ 14.6}  &  \num{ 30} & \num{ 2.24} & \SI{25.68}{\percent} & \SI{15.86}{\percent} & \num{ 4.1} & \num{ 18.2} \\
       50 &  \num{ 4} & \num{2640.83} & \SI{22.61}{\percent} & \SI{0.49}{\percent} & \num{17.8} & \num{ 133.0}  &  \num{ 9} & \num{1896.04} & \SI{0.69}{\percent} & \SI{1.74}{\percent} & \num{14.2} & \num{ 120.2} \\
      100 &  \num{ 0} & \num{3600.00} & \SI{17.15}{\percent} & \SI{0.05}{\percent} & --- & ---  &  \num{ 0} & \num{3600.00} & \SI{0.04}{\percent} & \SI{0.02}{\percent} & --- & --- \\
      \bottomrule
    \end{tabular*}
    \caption{Comparison of the different methods for maximum cycle length 3 and different chain lengths, for the FSE policy.}
  \end{tiny}
\end{table}
\end{landscape}
% }

\subsection{FSE: exploring tractability and efficiency}
\label{sec:results-FSE}

As the benefit of the cutting plane methods over branch-and-bound for the Full Recourse setting is evident, we have implemented the Fix Successful Exchanges (FSE) policy only for cut-CC and cut-PICEF.
The results of our computational experiments for FSE, again based on the most basic implementation without cut lifting, can be found in~\Cref{tab:cyc3-policy2}.

% We insert the table for comparing the two different cutting plane-based methods (Cut-CC, and Cut-PICEF) here

Again, we see that all of the instances based on graphs with~20 vertices can be solved within a matter of seconds, both for cut-CC and cut-PICEF.
However, for graphs with at least~50 vertices, both cut-CC and cut-PICEF take much longer to solve instances to optimality compared to the Full Recourse setting, especially for increasing attack budgets~$B$.
In particular, both cut-CC and cut-PICEF fail to solve all instances even if~$\chainlen = 2$ and $B = 1$.
Based on the numbers in~\Cref{tab:cyc3-policy2}, we can make a few observations on why imposing this recourse policy complicates the problem.
First of all, for the parameter settings for which none or only a fraction of the instances can be solved within the time limit, the percentage of time spent in solving the attacker-defender subproblem (stage 2) and the optimal recourse problem (stage 3) is remarkably small.
In other words, most of the time is spent solving the master problem.
Furthermore, compared to the Full Recourse setting, the number of attacks separated by solving attacker-defender subproblems is significantly larger on average, especially for the more challenging instances.
This implies that we end up with much larger master relaxations, which typically take significantly more time to be solved.
Apart from this, the mixed-integer programming formulations for all three stages are also much more complex due to the additional constraints imposed by the FSE policy.

Regarding the comparison of cut-CC and cut-PICEF, we again observe a transition as in the FR setting; for the most challenging instances with~$\chainlen \in \set{2, 3}$, cut-CC is superior to cut-PICEF: for~100 vertices and all different attack budgets $B \in \{1,2,3,4\}$, cut-CC is able to solve more instances within the time limit compared to cut-PICEF, whereas the opposite is true whenever~$\chainlen \geq 4$.
A similar comparison can be made on the average computation times over all solved instances.
The only difference here is that the transition occurs for a larger maximum chain length; cut-PICEF now starts to outperform cut-CC only whenever $\chainlen = 4$, whereas in the Full Recourse setting, this was the case for $\chainlen = 3$ already.

To answer Questions~\ref{Q3} and~\ref{Q4}, we can conclude that for small instances, both cut-CC and cut-PICEF are able to solve the problem within seconds.
Nevertheless, when the kidney exchange pools and/or attack budgets grow in size, both cut-CC and cut-PICEF struggle to solve a significant fraction of the instance sets within the time limit. 
The impact of increasing $\chainlen$ is more gradual for cut-PICEF, whereas for cut-CC, a clear transition is visible between $\chainlen = 2$ and $\chainlen = 3$, leading to a similar performance comparison between cut-CC and cut-PICEF as we observed for Full Recourse.
More sophisticated algorithmic techniques will be necessary to solve instances of real-life mid-size to large kidney exchange programs.

Furthermore, we have investigated the impact of the constraints of the FSE policy on the final master objective value of the instances, which is depicted in~\Cref{tab:fr-vs-fse}.
For this comparison, we can only include the instances that we were able to solve within the time limit for both the Full Recourse and the FSE setting.
We have aggregated our results for each kidney exchange pool size (20, 50, 100), as we believe it is one of the main instance features impacting the objective value in general.

\begin{table}
  \centering
  \begin{scriptsize}
    
    \label{tab:fr-vs-fse}
    \begin{tabular*}{0.5\textwidth}{@{}l@{\;\;\extracolsep{\fill}}rrr@{}}\toprule
          & $z^{*}_{\fr} - z^{*}_{\fse}$ & $\#$instances & Relative count \\
      \midrule
      \multicolumn{4}{@{}l}{$|V|=20$} \\
        \cmidrule{1-4}
          &  0 & \num{613} & \num{0.851}\\
          &  1 & \num{104} & \num{0.144}\\
          &  2 &  \num{3} & \num{0.004}\\
      \midrule
        Total & & \num{720}& \\
      \midrule
      \multicolumn{4}{@{}l}{$|V|=50$} \\
      \cmidrule{1-4}
          &  \num{0} & \num{318}& \num{0.748}\\
          &  \num{1} & \num{103}& \num{0.242}\\
          &  \num{2} &  \num{4} & \num{0.009}\\
      \midrule
        Total & & \num{425}& \\
      \midrule
      \multicolumn{4}{@{}l}{$|V|=100$} \\
      \cmidrule{1-4}
          & \num{0} & \num{89} & \num{0.947}\\
          &  \num{1} &  \num{4} & \num{0.043}\\
          &  \num{2} &  \num{1} & \num{0.011}\\
      \midrule
        Total & & \num{94} & \\
      \bottomrule
    \end{tabular*}
    \caption{Comparison of average objective values with FR and FSE for all instances that are solved by cut-CC and/or cut-PICEF for both recourse policies. Total number of instances per value of $|V|$ is 720,~\ie~$\cyclen\in\{3,4\}, \chainlen\in\{2,3,4\}$ and $B \in \{1,2,3,4\}$, 30 instances each.}
  \end{scriptsize}
\end{table}

Observe that for most instances, the objective value,~\ie~a guaranteed number of patients from the initial solution receiving a transplant, is rather similar for both recourse policies.
In particular, one can see that more than \SI{85}{\percent} for the instances with~20 vertices, the objective values of the Full Recourse and FSE settings coincide, whereas the objective values differ by more than one for less than~\SI{0.5}{\percent} of the instances.
Moreover, for any of the instances solved to optimality for both FR and FSE, the maximum difference in the objective values observed is 2.
We conclude that imposing the constraints on the recourse options with respect to fixing successful initial exchanges does not drastically decrease the guaranteed number of initial patients receiving a transplant given all possible attacks.
Consequently, imposing the FSE policy might be a feasible option in practice, if the advantages of reducing the logistical challenges outweighs the small, albeit non-negligible, loss of transplants in the worst case.

\subsection{Evaluation of the impact of lifting}
\label{sec:impact-lifting}
In the experiments discussed in~\Cref{sec:computational_results}, we have deliberately not considered cut lifting procedures to allow for a fair comparison of cut-CC and cut-PICEF with the branch-and-bound scheme.
As mentioned above, we have also conducted experiments to measure the
impact on the cutting plane approaches of the lifting schemes proposed in Sections~\ref{subsubsec:CC:recourse_problem} and~\ref{subsubsec:PICEF:recourse_problem}.
\Cref{tab:lifting-cyc3} shows the computational results for both cut-CC and cut-PICEF, and for Full Recourse and FSE, respectively.
We have restricted our results in this table to instances with 100 vertices, to focus on the hardest instances where algorithmic improvements are most necessary.
In the columns below ``with lifting'', we present the results where the modified recourse problems are used to separate interdiction cuts.
In the columns below ``without lifting'', we review the results of cut-CC and cut-PICEF from~\Cref{tab:cyc3-policy1} and~\Cref{tab:cyc3-policy2} for easier comparison.
Again, the attack budgets $B$ from 1 to 4 are used.
% The reported numbers in the columns ``time'', ``\#att.'', and ``\#sub'' are averages over all 30 instances per instance set. \BS{Dit hebben we al eerder gezegd, niet?}

\begin{table}
  \begin{tiny}
    \label{tab:lifting-cyc3}
    \begin{tabular*}{\textwidth}{@{}l@{\;\;\extracolsep{\fill}}rrrrrrrrrrrr@{}}\toprule
      {\bfseries cut-CC: FR} & \multicolumn{6}{c}{with lifting} & \multicolumn{6}{c}{without lifting} \\
      \cmidrule{2-7} \cmidrule{8-13}
      & & \multicolumn{3}{c}{time} & & & & \multicolumn{3}{c}{time} & & \\
      \cmidrule{3-5} \cmidrule{9-11}
       & \#opt & total & stage 2 & stage 3 & \#att. & \#sub & \#opt & total & stage 2 & stage 3 & \#att. & \#sub \\
      \midrule
      \multicolumn{13}{@{}l}{chain length: 2}\\
      $B = 1$ & \num{30} & \num{33.27} & \SI{7.13}{\percent} & \SI{17.38}{\percent} & \num{3.4} & \num{20.8}  &  \num{30} & \num{37.47} & \SI{7.98}{\percent} & \SI{25.18}{\percent} & \num{3.4} & \num{28.7} \\
      $B = 2$ & \num{30} & \num{74.27} & \SI{9.13}{\percent} & \SI{19.33}{\percent} & \num{5.3} & \num{57.2}  &  \num{30} & \num{102.18} & \SI{19.79}{\percent} & \SI{30.22}{\percent} & \num{4.9} & \num{119.5} \\
      $B = 3$ & \num{30} & \num{163.37} & \SI{20.28}{\percent} & \SI{23.41}{\percent} & \num{6.1} & \num{124.5}  &  \num{26} & \num{396.43} & \SI{37.39}{\percent} & \SI{28.33}{\percent} & \num{6.4} & \num{308.8} \\
      $B = 4$ & \num{30} & \num{258.16} & \SI{28.08}{\percent} & \SI{21.96}{\percent} & \num{7.5} & \num{215.3}  &  \num{21} & \num{783.06} & \SI{50.85}{\percent} & \SI{19.76}{\percent} & \num{7.9} & \num{411.7} \\
      \midrule
      \multicolumn{13}{@{}l}{chain length: 3}\\
      $B = 1$ & \num{28} & \num{540.66} & \SI{5.82}{\percent} & \SI{21.28}{\percent} & \num{3.7} & \num{26.6}  &  \num{25} & \num{597.63} & \SI{6.26}{\percent} & \SI{28.83}{\percent} & \num{3.5} & \num{33.0} \\
      $B = 2$ & \num{23} & \num{1175.08} & \SI{5.42}{\percent} & \SI{21.29}{\percent} & \num{5.4} & \num{55.8}  &  \num{18} & \num{1415.26} & \SI{6.39}{\percent} & \SI{27.47}{\percent} & \num{5.9} & \num{89.3} \\
      $B = 3$ & \num{20} & \num{1660.15} & \SI{7.33}{\percent} & \SI{28.94}{\percent} & \num{6.2} & \num{130.2}  &  \num{11} & \num{2093.21} & \SI{10.21}{\percent} & \SI{41.77}{\percent} & \num{5.7} & \num{200.7} \\
      $B = 4$ & \num{17} & \num{2155.48} & \SI{6.32}{\percent} & \SI{20.18}{\percent} & \num{8.1} & \num{181.9}  &  \num{6} & \num{2956.89} & \SI{15.72}{\percent} & \SI{33.07}{\percent} & \num{8.3} & \num{540.0} \\
      \midrule
      \multicolumn{13}{@{}l}{chain length: 4}\\
      $B = 1$ & \num{3} & \num{3159.06} & \SI{10.15}{\percent} & \SI{4.91}{\percent} & \num{3.7} & \num{16.0}  &  \num{3} & \num{3283.32} & \SI{10.86}{\percent} & \SI{6.29}{\percent} & \num{3.7} & \num{34.3} \\
      $B = 2$ & \num{2} & \num{3401.13} & \SI{8.40}{\percent} & \SI{7.42}{\percent} & \num{4.5} & \num{74.5}  &  \num{1} & \num{3477.83} & \SI{9.58}{\percent} & \SI{6.12}{\percent} & \num{7.0} & \num{56.0} \\
      $B = 3$ & \num{2} & \num{3400.58} & \SI{5.16}{\percent} & \SI{5.91}{\percent} & \num{4.0} & \num{86.0}  &  \num{0} & \num{3600.00} & \SI{4.71}{\percent} & \SI{5.72}{\percent} & --- & --- \\
      $B = 4$ & \num{1} & \num{3498.25} & \SI{4.75}{\percent} & \SI{1.70}{\percent} & \num{10.0} & \num{90.0}  &  \num{0} & \num{3600.00} & \SI{4.34}{\percent} & \SI{1.94}{\percent} & --- & --- \\
      \midrule
      {\bfseries cut-CC: FSE} & & \multicolumn{3}{c}{time} & & & & \multicolumn{3}{c}{time} & & \\
      \cmidrule{3-5} \cmidrule{9-11}
       & \#opt & total & stage 2 & stage 3 & \#att. & \#sub & \#opt & total & stage 2 & stage 3 & \#att. & \#sub \\
      \midrule
      \multicolumn{13}{@{}l}{chain length: 2}\\
      $B = 1$ &  \num{26} & \num{189.69} & \SI{30.38}{\percent} & \SI{0.60}{\percent} & \num{6.5} & \num{22.7}  &  \num{26} & \num{187.88} & \SI{30.48}{\percent} & \SI{0.54}{\percent} & \num{6.5} & \num{22.8} \\
      $B = 2$ &  \num{12} & \num{1490.95} & \SI{3.97}{\percent} & \SI{0.52}{\percent} & \num{11.1} & \num{50.2}  &  \num{11} & \num{1733.28} & \SI{4.35}{\percent} & \SI{0.51}{\percent} & \num{11.3} & \num{102.7} \\
      $B = 3$ &  \num{6} & \num{2901.80} & \SI{5.41}{\percent} & \SI{0.42}{\percent} & \num{16.2} & \num{227.8}  &  \num{4} & \num{2808.18} & \SI{6.33}{\percent} & \SI{0.56}{\percent} & \num{14.5} & \num{159.0} \\
      $B = 4$ &  \num{1} & \num{3510.64} & \SI{2.65}{\percent} & \SI{0.23}{\percent} & \num{13.0} & \num{307.0}  &  \num{0} & \num{3600.00} & \SI{4.76}{\percent} & \SI{0.33}{\percent} & --- & --- \\
      \midrule
      \multicolumn{13}{@{}l}{chain length: 3}\\
       $B = 1$ &  \num{14} & \num{2132.62} & \SI{32.61}{\percent} & \SI{0.70}{\percent} & \num{5.6} & \num{20.1}  &  \num{13} & \num{2198.62} & \SI{32.36}{\percent} & \SI{0.69}{\percent} & \num{5.6} & \num{19.9} \\
       $B = 2$ &  \num{2} & \num{3546.73} & \SI{11.28}{\percent} & \SI{0.54}{\percent} & \num{17.5} & \num{84.0}  &  \num{2} & \num{3343.52} & \SI{9.90}{\percent} & \SI{0.54}{\percent} & \num{13.0} & \num{43.5} \\
       $B = 3$ &  \num{0} & \num{3600.00} & \SI{12.27}{\percent} & \SI{0.52}{\percent} & --- & ---  &  \num{2} & \num{3547.29} & \SI{12.34}{\percent} & \SI{0.57}{\percent} & \num{18.0} & \num{192.5} \\
       $B = 4$ &  \num{0} & \num{3600.00} & \SI{9.51}{\percent} & \SI{0.19}{\percent} & --- & ---  &  \num{1} & \num{3518.60} & \SI{9.54}{\percent} & \SI{0.23}{\percent} & \num{14.0} & \num{192.0} \\
      \midrule
      \multicolumn{13}{@{}l}{chain length: 4}\\
       $B = 1$ &  \num{1} & \num{3561.68} & \SI{19.96}{\percent} & \SI{0.17}{\percent} & \num{6.0} & \num{20.0}  &  \num{1} & \num{3566.63} & \SI{18.65}{\percent} & \SI{0.13}{\percent} & \num{5.0} & \num{11.0} \\
       $B = 2$ &  \num{0} & \num{3600.00} & \SI{12.39}{\percent} & \SI{0.05}{\percent} & --- & ---  &  \num{0} & \num{3600.00} & \SI{12.00}{\percent} & \SI{0.06}{\percent} & --- & --- \\
       $B = 3$ &  \num{0} & \num{3600.00} & \SI{15.62}{\percent} & \SI{0.05}{\percent} & --- & ---  &  \num{0} & \num{3600.00} & \SI{15.93}{\percent} & \SI{0.04}{\percent} & --- & --- \\
       $B = 4$ &  \num{0} & \num{3600.00} & \SI{17.94}{\percent} & \SI{0.04}{\percent} & --- & ---  &  \num{0} & \num{3600.00} & \SI{17.15}{\percent} & \SI{0.05}{\percent} & --- & --- \\
      \midrule
      {\bfseries cut-PICEF: FR} & & \multicolumn{3}{c}{time} & & & & \multicolumn{3}{c}{time} & & \\
      \cmidrule{3-5} \cmidrule{9-11}
       & \#opt & total & stage 2 & stage 3 & \#att. & \#sub & \#opt & total & stage 2 & stage 3 & \#att. & \#sub \\
      \midrule
      \multicolumn{13}{@{}l}{chain length: 2}\\
      $B = 1$ & \num{30} & \num{62.88} & \SI{11.65}{\percent} & \SI{45.83}{\percent} & \num{3.3} & \num{21.7}  &  \num{30} & \num{43.32} & \SI{8.55}{\percent} & \SI{31.72}{\percent} & \num{3.6} & \num{31.1} \\
      $B = 2$ & \num{30} & \num{152.87} & \SI{11.69}{\percent} & \SI{48.87}{\percent} & \num{5.3} & \num{57.1}  &  \num{30} & \num{137.89} & \SI{20.23}{\percent} & \SI{29.31}{\percent} & \num{5.5} & \num{120.2} \\
      $B = 3$ & \num{30} & \num{319.64} & \SI{12.80}{\percent} & \SI{53.06}{\percent} & \num{6.4} & \num{94.5}  &  \num{25} & \num{518.40} & \SI{40.29}{\percent} & \SI{25.17}{\percent} & \num{7.2} & \num{247.8} \\
      $B = 4$ & \num{29} & \num{542.21} & \SI{17.06}{\percent} & \SI{55.70}{\percent} & \num{7.8} & \num{166.0}  &  \num{18} & \num{1254.98} & \SI{59.81}{\percent} & \SI{19.31}{\percent} & \num{8.4} & \num{396.8} \\
      \midrule
      \multicolumn{13}{@{}l}{chain length: 3}\\
       $B = 1$ & \num{30} & \num{119.61} & \SI{13.72}{\percent} & \SI{40.38}{\percent} & \num{3.9} & \num{22.8}  &  \num{30} & \num{114.01} & \SI{8.76}{\percent} & \SI{36.75}{\percent} & \num{4.0} & \num{30.8} \\
       $B = 2$ & \num{30} & \num{244.16} & \SI{9.25}{\percent} & \SI{55.42}{\percent} & \num{4.6} & \num{56.7}  &  \num{29} & \num{303.46} & \SI{14.41}{\percent} & \SI{48.09}{\percent} & \num{4.9} & \num{120.0} \\
       $B = 3$ & \num{27} & \num{752.24} & \SI{9.85}{\percent} & \SI{60.42}{\percent} & \num{6.4} & \num{95.8}  &  \num{21} & \num{1262.74} & \SI{25.70}{\percent} & \SI{54.60}{\percent} & \num{6.8} & \num{268.2} \\
       $B = 4$ & \num{27} & \num{911.99} & \SI{10.91}{\percent} & \SI{59.01}{\percent} & \num{7.5} & \num{141.6}  &  \num{13} & \num{2140.89} & \SI{36.24}{\percent} & \SI{44.04}{\percent} & \num{8.9} & \num{364.5} \\
      \midrule
      \multicolumn{13}{@{}l}{chain length: 4}\\
       $B = 1$ & \num{30} & \num{181.29} & \SI{10.58}{\percent} & \SI{49.64}{\percent} & \num{3.3} & \num{24.0}  &  \num{30} & \num{195.42} & \SI{7.60}{\percent} & \SI{53.41}{\percent} & \num{3.4} & \num{39.3} \\
       $B = 2$ & \num{30} & \num{409.29} & \SI{8.86}{\percent} & \SI{46.99}{\percent} & \num{5.3} & \num{46.7}  &  \num{27} & \num{674.06} & \SI{14.31}{\percent} & \SI{56.58}{\percent} & \num{4.9} & \num{160.7} \\
       $B = 3$ & \num{25} & \num{1165.20} & \SI{7.56}{\percent} & \SI{54.31}{\percent} & \num{6.6} & \num{83.9}  &  \num{18} & \num{1908.69} & \SI{21.57}{\percent} & \SI{58.79}{\percent} & \num{6.9} & \num{249.3} \\
       $B = 4$ & \num{21} & \num{1525.97} & \SI{10.41}{\percent} & \SI{63.18}{\percent} & \num{7.3} & \num{115.1}  &  \num{6} & \num{3083.76} & \SI{32.60}{\percent} & \SI{50.54}{\percent} & \num{7.8} & \num{321.0} \\
       \midrule
      {\bfseries cut-PICEF: FSE} & & \multicolumn{3}{c}{time} & & & & \multicolumn{3}{c}{time} & & \\
      \cmidrule{3-5} \cmidrule{9-11}
       & \#opt & total & stage 2 & stage 3 & \#att. & \#sub & \#opt & total & stage 2 & stage 3 & \#att. & \#sub \\
      \midrule
      \multicolumn{13}{@{}l}{chain length: 2}\\
      $B = 1$ &  \num{24} & \num{654.49} & \SI{0.36}{\percent} & \SI{0.11}{\percent} & \num{4.7} & \num{11.3}  &  \num{25} & \num{661.56} & \SI{0.34}{\percent} & \SI{0.05}{\percent} & \num{4.9} & \num{12.4} \\
      $B = 2$ &  \num{10} & \num{2199.59} & \SI{0.17}{\percent} & \SI{0.06}{\percent} & \num{6.8} & \num{23.1}  &  \num{9} & \num{2239.02} & \SI{0.15}{\percent} & \SI{0.05}{\percent} & \num{7.3} & \num{34.3} \\
      $B = 3$ &  \num{5} & \num{2765.90} & \SI{1.20}{\percent} & \SI{0.21}{\percent} & \num{8.4} & \num{176.6}  &  \num{2} & \num{3445.56} & \SI{0.10}{\percent} & \SI{2.59}{\percent} & \num{16.0} & \num{88.5} \\
      $B = 4$ &  \num{1} & \num{3568.64} & \SI{1.16}{\percent} & \SI{0.15}{\percent} & \num{7.0} & \num{68.0}  &  \num{0} & \num{3600.00} & \SI{0.09}{\percent} & \SI{0.03}{\percent} & --- & --- \\
      \midrule
      \multicolumn{13}{@{}l}{chain length: 3}\\
      $B = 1$ &  \num{15} & \num{1091.93} & \SI{0.40}{\percent} & \SI{0.06}{\percent} & \num{3.9} & \num{8.7}  &  \num{14} & \num{1169.76} & \SI{0.19}{\percent} & \SI{0.03}{\percent} & \num{3.9} & \num{8.6} \\
      $B = 2$ &  \num{1} & \num{3435.58} & \SI{0.08}{\percent} & \SI{0.06}{\percent} & \num{4.0} & \num{18.0}  &  \num{0} & \num{3600.00} & \SI{0.07}{\percent} & \SI{0.02}{\percent} & --- & --- \\
      $B = 3$ &  \num{0} & \num{3600.00} & \SI{0.07}{\percent} & \SI{0.06}{\percent} & --- & ---  &  \num{0} & \num{3600.00} & \SI{0.06}{\percent} & \SI{0.02}{\percent} & --- & --- \\
      $B = 4$ &  \num{0} & \num{3600.00} & \SI{0.09}{\percent} & \SI{0.10}{\percent} & --- & ---  &  \num{0} & \num{3600.00} & \SI{0.05}{\percent} & \SI{0.01}{\percent} & --- & --- \\
      \midrule
      \multicolumn{13}{@{}l}{chain length: 4}\\
      $B = 1$ &  \num{6} & \num{2415.62} & \SI{1.89}{\percent} & \SI{0.04}{\percent} & \num{4.0} & \num{9.0}  &  \num{6} & \num{2563.12} & \SI{0.07}{\percent} & \SI{0.01}{\percent} & \num{4.0} & \num{9.5} \\
      $B = 2$ &  \num{0} & \num{3600.00} & \SI{0.07}{\percent} & \SI{0.04}{\percent} & --- & ---  &  \num{0} & \num{3600.00} & \SI{0.04}{\percent} & \SI{0.01}{\percent} & --- & --- \\
      $B = 3$ &  \num{0} & \num{3600.00} & \SI{0.09}{\percent} & \SI{0.06}{\percent} & --- & ---  &  \num{0} & \num{3600.00} & \SI{0.04}{\percent} & \SI{0.02}{\percent} & --- & --- \\
      $B = 4$ &  \num{0} & \num{3600.00} & \SI{0.12}{\percent} & \SI{0.06}{\percent} & --- & ---  &  \num{0} & \num{3600.00} & \SI{0.04}{\percent} & \SI{0.02}{\percent} & --- & --- \\
      \bottomrule
    \end{tabular*}
    \caption{Impact of cut lifting restricted to instances with 100 pairs / NDDs, for cut-CC and cut-PICEF, given maximum cycle length $K = 3$ and different chain lengths, for Full Recourse (FR) and FSE}
  \end{tiny}
\end{table}

For Full Recourse, both cut-CC and cut-PICEF benefit from cut lifting, as it has a positive impact on both the number of instances solved and the total computation time.
While the number of generated attacks per optimally solved instance is roughly the same, the number of subproblems that need to be solved is significantly smaller whenever cut lifting is enabled.
As a result, proving the optimality of solutions to the attacker-defender subproblem takes fewer iterations, which suggests cut lifting is of benefit for reducing computation times.

Interestingly, this only leads to a reduction in running time and an increased number of solved instances for Full Recourse.
Especially when cut-PICEF is employed, and for instances with larger maximum chain lengths, the number of solved instances increases substantially when cut lifting is enabled compared to the basic variant of cut-PICEF, as well as a computation time reduction of over \SI{50}{\percent} for the instances with attack budget~$B = 4$. 

For FSE, no significant benefit is visible when cut lifting is enabled or vice versa.
For both cut-CC and cut-PICEF, enabling cut lifting slightly improves the average computation times for most parameter settings, but it cannot be regarded as an improvement under all circumstances.
For chain length $\chainlen = 2$ and $B = 1$, enabling cut lifting decreases the number of instances solved within the time limit, although for larger $\chainlen$ and $B$, the opposite holds.
% For instances with~20 vertices, there is almost no difference in the running
% time if lifting is enabled or not.
% For graphs with~50 vertices, however, the number of solved subproblems for
% cut-CC using lifting is roughly~\SI{50}{\percent} smaller than
% without lifting.
% This leads to a running time reduction of about~\SI{25}{\percent}
% for~$\chainlen \in \{3,4\}$ and~\SI{43.2}{\percent} for~$\chainlen = 2$.
% For cut-PICEF, we can observe a similar trend, which is even
% slightly stronger.
% Here, lifting leads to a running time improvement
% between~\SI{50.4}{\percent} and~\SI{59.9}{\percent} for graphs with~50
% vertices, and the number of generated attacks reduces
% by~\SI{62.6}{\percent}--\SI{67.5}{\percent}.
Moreover,~\Cref{tab:combined-aggrlifting-cyc3-policy1} shows an aggregated overview of the results of cut lifting.
As the positive impact of cut lifting was most pronounced for Full Recourse, we restrict ourselves to the results of this recourse policy.

% generated by paper_generate_aggr_lifting_table.sh 3 s c 1
\begin{table}[htbp]
  \begin{scriptsize}
    
    \label{tab:combined-aggrlifting-cyc3-policy1}
    \begin{tabular*}{\textwidth}{@{}l@{\;\;\extracolsep{\fill}}rrrrrrrr@{}}\toprule
      {\bfseries Cut-CC} & \multicolumn{4}{c}{with lifting} & \multicolumn{4}{c}{without lifting}\\
      \cmidrule{2-5} \cmidrule{6-9}
      \#vertices & \#opt & time & \#att. & \#sub & \#opt & time & \#att. & \#sub \\
      \midrule
      \multicolumn{9}{@{}l}{chain length: 2}\\
       20 &  120 & 0.1 & 2.8 & 8.0 &  120 & 0.1 & 3.0 & 10.0\\
       50 &  120 & 7.3 & 5.1 & 41.7 &  120 & 17.8 & 5.3 & 113.1\\
      100 &  120 & 132.3 & 5.6 & 104.5 &  107 & 329.8 & 5.6 & 217.2\\
      \midrule
      \multicolumn{9}{@{}l}{chain length: 3}\\
       20 &  120 & 0.2 & 2.8 & 8.4 &  120 & 0.3 & 3.0 & 10.8\\
       50 &  120 & 29.6 & 5.4 & 45.4 &  119 & 50.6 & 5.5 & 126.6\\
      100 &  88 & 1382.8 & 5.9 & 98.6 &  60 & 1765.7 & 5.9 & 215.8\\
      \midrule
      \multicolumn{9}{@{}l}{chain length: 4}\\
       20 &  120 & 0.8 & 2.9 & 8.8 &  120 & 0.8 & 2.9 & 11.3\\
       50 &  118 & 247.9 & 5.4 & 50.7 &  111 & 376.3 & 5.8 & 127.8\\
      100 &  8 & 3364.8 & 5.5 & 66.6 &  4 & 3490.3 & 2.7 & 22.6\\
      \midrule
      {\bfseries Cut-PICEF} & \multicolumn{4}{c}{with lifting} & \multicolumn{4}{c}{without lifting}\\
      \cmidrule{2-5} \cmidrule{6-9}
      \#vertices & \#opt & time & \#att. & \#sub & \#opt & time & \#att. & \#sub \\
      \midrule
      \multicolumn{9}{@{}l}{chain length: 2}\\
       20 &  120 & 0.2 & 2.7 & 7.9 &  120 & 0.1 & 2.8 & 9.3\\
       50 &  120 & 11.6 & 5.3 & 35.4 &  118 & 24.5 & 5.2 & 102.6\\
      100 &  119 & 269.4 & 5.7 & 84.8 &  103 & 488.6 & 6.2 & 199.0\\
      \midrule
      \multicolumn{9}{@{}l}{chain length: 3}\\
       20 &  120 & 0.3 & 2.8 & 7.8 &  120 & 0.3 & 2.9 & 10.0\\
       50 &  120 & 27.4 & 4.9 & 40.8 &  118 & 35.7 & 5.1 & 97.5\\
      100 &  114 & 507.0 & 5.6 & 79.2 &  93 & 955.3 & 6.1 & 195.9\\
      \midrule
      \multicolumn{9}{@{}l}{chain length: 4}\\
       20 &  120 & 0.5 & 2.7 & 7.7 &  120 & 0.4 & 2.9 & 10.8\\
       50 &  120 & 48.7 & 4.9 & 43.0 &  117 & 67.8 & 5.1 & 110.8\\
      100 &  106 & 820.4 & 5.6 & 67.4 &  81 & 1465.5 & 5.8 & 192.6\\
      \bottomrule
    \end{tabular*}
    \caption{Comparison of the cut-CC method with and without lifting for maximum cycle length 3 and different chain lengths, for the Full Recourse policy.}
  \end{scriptsize}
\end{table}

We conclude that under almost all parameter settings, cut lifting is a valuable feature for both cut-CC and cut-PICEF to obtain a performant algorithm for the Full Recourse policy, especially for less restrictive maximum lengths of cycles and chains in a kidney exchange solution.
This is consistent with similar experiments conducted in literature, such as the modifications of interdiction cuts as reported by~\cite{Fischetti2019}.

%%% Local Variables:
%%% mode: latex
%%% TeX-master: "ManuscriptMay2022"
%%% End:

\section{Conclusion}\label{sec:conclusion}

In this paper, we investigate a robust optimization variant of the kidney exchange problem. In this setting, donors and/or recipients might decide to drop out of the program after the proposal of exchanges and before the actual transplants take place, due to a variety of reasons. 
This is a practical problem since donors and pairs leaving a kidney exchange program lead to broken cycles and chains of initially proposed transplants. 

We propose a cutting plane algorithm for solving the attacker-defender subproblem based on the cycle-chain formulation and the position-indexed chain-edge formulation. The algorithms are based on a reformulation of the attacker-defender subproblem with constraints for any feasible kidney exchange solution.
Cuts are separated by solving the problem of finding the optimal recourse solution given an initial kidney exchange solution and attack.
These algorithms are implemented for two recourse policies, the Full Recourse policy introduced in~\cite{Glorie20} and the Fix Successful Exchanges policy.
The latter aims to give additional guarantees to participating recipients in case none of the involved people of an exchange drop out of the kidney exchange program. 
Furthermore, a lifting technique is deployed to obtain solutions on the entire compatibility graph that still have maximum recourse value on the non-attacked cycles and chains. 

Our computational results show that both the CC- and PICEF-based implementations (cut-CC and cut-PICEF) of our cutting plane algorithm outperform the state-of-the-art in the Full Recourse setting, which is a branch-and-bound type algorithm proposed in~\cite{Glorie20} when we consider benchmark instances with $|V| \ge 50$. 
We observe a transition phase between cut-CC and cut-PICEF.
Whenever we allow only small chains, cut-CC outperforms cut-PICEF.
However, when larger chains are allowed, cut-PICEF becomes the stronger method and the benefit of having a polynomial-size formulation in terms of chains becomes strongly apparent.
In the FSE setting, cut-CC and cut-PICEF can solve most instances with up to 50 instances and small attack budgets $B$, rendering the problem tractable for small kidney exchanges but challenging for mid-size to large pools in practice.

Enabling the cut lifting technique leads, under most choices for parameter settings, to a significant reduction in the running time under most choices for parameter settings.
The most apparent reason for this is that fewer iterations of the separation problem are needed to solve the attacker-defender subproblem.
Since they give better feedback on the recourse value that can be realized given a set of attacked vertices.
As a result, the generated attacks are more effective, and typically, we need fewer attacks to solve the entire robust optimization model to optimality.

The success of cut lifting highlights an interesting aspect of cutting plane approaches to multi-level problems. 
Whenever multiple valid cuts are violated in an iteration, the decision of which cut to add can have a significant impact on runtime. 
The variability in the number of iterations required between cut-CC and cut-PICEF, where different but in essence arbitrarily chosen alternate optimal solutions to the recourse problem lead to large discrepancies in the number of iterations required. 
Studying these differences may lead to insights and tie-breaking rules to further speed up computation.
One possible direction for future research might be to exploit the hierarchical structure of the set of cycles and chains in a directed graph.
This means that instead of one term per cycle or chain in an interdiction cut, we might also consider terms corresponding to subcycles or subchains. This leads to optimal solutions in which the interdiction budget is spent more efficiently.

A second avenue for further work is to apply our techniques to a setting with a more realistic uncertainty set.
A more sophisticated data analysis of the physiological properties of donors and recipients may be necessary to find an uncertainty set that more accurately approximates the real set of attacks that may occur.

\section*{Acknowledgements}
We would like to thank Xenia Klimentova for sharing kidney exchange instance data.
Declarations of interest: none.

\section*{Authorship contribution}
{\bfseries Danny Blom:} Conceptualization, Methodology, Software, Writing.
{\bfseries Christopher Hojny:} Conceptualization, Methodology, Software, Writing.
{\bfseries Bart Smeulders:} Conceptualization, Methodology, Writing.

% \clearpage

\bibliographystyle{plainnat}
\bibliography{./ManuscriptMay2022/KEP}

\clearpage
\appendix

\setcounter{table}{0}
\renewcommand{\thetable}{\Alph{section}.\arabic{table}}

\section{Implementation of the \cite{Glorie20} algorithm}\label{sec:appendix-bandb}
In this appendix, we briefly describe our implementation of the
branch-and-bound algorithm by \cite{Glorie20} used to solve the combination of stage 2 and~3 in their iterative solution framework.
In particular, in this appendix, we lay out the choices we made where decisions are not
fully specified in Algorithm~2 in their original paper.
We matched these decisions as close as possible to add details on the \cite{Glorie20} implementation received through personal communications.
References to lines are as they appear in Algorithm 2 suggested by \cite{Glorie20}.

\paragraph{Line 8: Node selection}
We deploy a depth-first search strategy to select the next subproblem with
priority to the attack branch.
In other words, if we generated two children nodes~$t_0$ and~$t_1$ of branch-and-bound
node~$t$, which extend the branching decisions at~$t$ by selecting a new
vertex to be attacked and not to be attacked, respectively, we first explore the subtree rooted at~$t_0$ before proceeding with the subtree rooted at~$t_1$.

\paragraph{Line 9: Fill for maximal attack}
When choosing the maximal attack, we add vertices to be attacked sequentially.
For each new node, we first identify the highest weight cycle or chain for which no vertices are attacked yet, and for which there is a vertex that is still unfixed.
Ties between cycles and chains are broken arbitrarily.
For this cycle or chain, we add the lowest index vertex to the attack.
If no such cycles or chains exist, i.e., at least one vertex is already attacked, or all vertices are fixed to be attacked or not attacked, we add the lowest index vertex that is not yet fixed to the attack.

\paragraph{Line 25: Branching}
We choose the first vertex we added to the attack to branch on.

\section{Computational results for $\cyclen = 4$}\label{sec:appendix-compresults-K=4}

% generated by paper_generate_table.sh 4 s 1
\begin{landscape}
    \begin{table}[htbp]
    \begin{tiny}
      
      \label{tab:cyc4-policy1}
      \begin{tabular*}{\linewidth}{@{}l@{\;\;\extracolsep{\fill}}rrrrrrrrrrrrrrrrr@{}}\toprule
           & \multicolumn{5}{c}{Branch-and-Bound} & \multicolumn{6}{c}{cut-CC} & \multicolumn{6}{c}{cut-PICEF}\\
          \cmidrule{2-6} \cmidrule{7-12} \cmidrule{13-18}
          & & \multicolumn{2}{c}{time} & & & & \multicolumn{3}{c}{time} & & & & \multicolumn{3}{c}{time} & & \\
          \cmidrule{3-4} \cmidrule{8-10} \cmidrule{14-16}
          \#vertices & \#opt & total & stage 2 & \#att. & \#B\&B-nodes & \#opt & total & stage 2 & stage 3 & \#att. & \#sub & \#opt & total & stage 2 & stage 3 & \#att. & \#sub \\
          \midrule
          \multicolumn{18}{@{}l}{chain length: 2}\\
          \multicolumn{18}{@{}l}{$B = 1$}\\
           20 &  \num{ 30} & \num{ 0.19} & \SI{27.39}{\percent} & \num{ 1.9} & \num{ 15.0}  &  \num{ 30} & \num{ 0.15} & \SI{24.96}{\percent} & \SI{17.75}{\percent} & \num{ 1.8} & \num{ 7.7}  &  \num{ 30} & \num{ 0.24} & \SI{16.77}{\percent} & \SI{24.50}{\percent} & \num{ 2.0} & \num{ 7.3} \\
           50 &  \num{ 30} & \num{ 18.41} & \SI{34.78}{\percent} & \num{ 3.0} & \num{ 55.5}  &  \num{ 30} & \num{ 16.03} & \SI{8.32}{\percent} & \SI{26.97}{\percent} & \num{ 2.9} & \num{ 22.7}  &  \num{ 30} & \num{ 18.13} & \SI{7.37}{\percent} & \SI{24.84}{\percent} & \num{ 3.0} & \num{ 21.2} \\
          100 &  \num{ 23} & \num{1078.88} & \SI{50.34}{\percent} & \num{ 3.3} & \num{ 115.0}  &  \num{ 26} & \num{ 893.61} & \SI{5.14}{\percent} & \SI{37.28}{\percent} & \num{ 3.5} & \num{ 45.3}  &  \num{ 26} & \num{ 918.91} & \SI{5.62}{\percent} & \SI{43.40}{\percent} & \num{ 3.5} & \num{ 42.1} \\
          \multicolumn{18}{@{}l}{$B = 2$}\\
           20 &  \num{ 30} & \num{ 0.61} & \SI{48.54}{\percent} & \num{ 3.2} & \num{ 97.2}  &  \num{ 30} & \num{ 0.40} & \SI{26.39}{\percent} & \SI{20.56}{\percent} & \num{ 2.9} & \num{ 15.2}  &  \num{ 30} & \num{ 0.44} & \SI{23.22}{\percent} & \SI{16.24}{\percent} & \num{ 3.0} & \num{ 14.5} \\
           50 &  \num{ 30} & \num{ 85.99} & \SI{71.79}{\percent} & \num{ 5.1} & \num{ 833.7}  &  \num{ 30} & \num{ 50.99} & \SI{25.07}{\percent} & \SI{27.72}{\percent} & \num{ 5.0} & \num{ 101.5}  &  \num{ 30} & \num{ 51.56} & \SI{22.87}{\percent} & \SI{25.41}{\percent} & \num{ 5.0} & \num{ 79.8} \\
          100 &  \num{ 4} & \num{3343.15} & \SI{64.84}{\percent} & \num{ 7.8} & \num{ 2139.2}  &  \num{ 15} & \num{1904.66} & \SI{7.79}{\percent} & \SI{41.12}{\percent} & \num{ 4.9} & \num{ 147.7}  &  \num{ 15} & \num{2154.89} & \SI{8.62}{\percent} & \SI{44.55}{\percent} & \num{ 5.3} & \num{ 142.7} \\
          \multicolumn{18}{@{}l}{$B = 3$}\\
           20 &  \num{ 30} & \num{ 1.33} & \SI{63.32}{\percent} & \num{ 3.8} & \num{ 324.0}  &  \num{ 30} & \num{ 0.55} & \SI{31.08}{\percent} & \SI{13.95}{\percent} & \num{ 3.6} & \num{ 16.1}  &  \num{ 30} & \num{ 0.45} & \SI{34.28}{\percent} & \SI{19.87}{\percent} & \num{ 3.2} & \num{ 14.8} \\
           50 &  \num{ 29} & \num{ 727.97} & \SI{90.48}{\percent} & \num{ 7.4} & \num{12045.6}  &  \num{ 30} & \num{ 82.93} & \SI{38.76}{\percent} & \SI{22.90}{\percent} & \num{ 6.1} & \num{ 207.3}  &  \num{ 30} & \num{ 139.06} & \SI{43.64}{\percent} & \SI{18.97}{\percent} & \num{ 6.4} & \num{ 240.4} \\
          100 &  \num{ 0} & \num{3600.00} & \SI{60.64}{\percent} & --- & ---  &  \num{ 8} & \num{2972.21} & \SI{13.30}{\percent} & \SI{30.27}{\percent} & \num{ 5.9} & \num{ 333.6}  &  \num{ 6} & \num{3254.15} & \SI{11.60}{\percent} & \SI{38.67}{\percent} & \num{ 7.8} & \num{ 312.8} \\
          \multicolumn{18}{@{}l}{$B = 4$}\\
           20 &  \num{ 30} & \num{ 2.82} & \SI{62.26}{\percent} & \num{ 4.8} & \num{ 964.1}  &  \num{ 30} & \num{ 0.38} & \SI{33.03}{\percent} & \SI{14.29}{\percent} & \num{ 3.3} & \num{ 12.9}  &  \num{ 30} & \num{ 0.38} & \SI{36.45}{\percent} & \SI{15.05}{\percent} & \num{ 3.3} & \num{ 13.3} \\
           50 &  \num{ 12} & \num{2412.60} & \SI{95.11}{\percent} & \num{12.7} & \num{63189.2}  &  \num{ 28} & \num{ 164.12} & \SI{50.80}{\percent} & \SI{15.87}{\percent} & \num{ 7.8} & \num{ 273.8}  &  \num{ 27} & \num{ 187.50} & \SI{51.39}{\percent} & \SI{13.60}{\percent} & \num{ 7.6} & \num{ 227.5} \\
          100 &  \num{ 0} & \num{3600.00} & \SI{60.86}{\percent} & --- & ---  &  \num{ 3} & \num{3410.61} & \SI{19.56}{\percent} & \SI{24.22}{\percent} & \num{ 9.0} & \num{ 534.7}  &  \num{ 1} & \num{3572.13} & \SI{16.26}{\percent} & \SI{25.13}{\percent} & \num{17.0} & \num{ 872.0} \\
          \midrule
          \multicolumn{18}{@{}l}{chain length: 3}\\
          \multicolumn{18}{@{}l}{$B = 1$}\\
           20 &  \num{ 30} & \num{ 0.34} & \SI{32.74}{\percent} & \num{ 1.9} & \num{ 15.8}  &  \num{ 30} & \num{ 0.26} & \SI{27.11}{\percent} & \SI{13.20}{\percent} & \num{ 1.9} & \num{ 8.2}  &  \num{ 30} & \num{ 0.35} & \SI{11.59}{\percent} & \SI{21.65}{\percent} & \num{ 1.9} & \num{ 7.4} \\
           50 &  \num{ 30} & \num{ 34.18} & \SI{30.98}{\percent} & \num{ 2.8} & \num{ 51.3}  &  \num{ 30} & \num{ 37.67} & \SI{7.10}{\percent} & \SI{23.35}{\percent} & \num{ 3.1} & \num{ 24.5}  &  \num{ 30} & \num{ 33.20} & \SI{7.30}{\percent} & \SI{35.49}{\percent} & \num{ 3.2} & \num{ 22.8} \\
          100 &  \num{ 19} & \num{1883.69} & \SI{48.53}{\percent} & \num{ 3.2} & \num{ 97.1}  &  \num{ 21} & \num{1708.52} & \SI{6.25}{\percent} & \SI{44.78}{\percent} & \num{ 3.0} & \num{ 45.8}  &  \num{ 27} & \num{1142.96} & \SI{6.47}{\percent} & \SI{52.07}{\percent} & \num{ 3.2} & \num{ 38.7} \\
          \multicolumn{18}{@{}l}{$B = 2$}\\
           20 &  \num{ 30} & \num{ 0.83} & \SI{54.29}{\percent} & \num{ 3.1} & \num{ 107.9}  &  \num{ 30} & \num{ 0.57} & \SI{32.87}{\percent} & \SI{13.25}{\percent} & \num{ 2.8} & \num{ 16.4}  &  \num{ 30} & \num{ 0.64} & \SI{27.73}{\percent} & \SI{17.01}{\percent} & \num{ 3.0} & \num{ 14.0} \\
           50 &  \num{ 30} & \num{ 186.74} & \SI{67.81}{\percent} & \num{ 5.0} & \num{ 925.4}  &  \num{ 30} & \num{ 100.92} & \SI{18.67}{\percent} & \SI{30.48}{\percent} & \num{ 4.8} & \num{ 114.1}  &  \num{ 30} & \num{ 98.34} & \SI{16.99}{\percent} & \SI{40.47}{\percent} & \num{ 5.0} & \num{ 90.1} \\
          100 &  \num{ 1} & \num{3536.53} & \SI{59.65}{\percent} & \num{10.0} & \num{ 2343.0}  &  \num{ 7} & \num{3107.82} & \SI{5.38}{\percent} & \SI{29.00}{\percent} & \num{ 5.1} & \num{ 113.4}  &  \num{ 10} & \num{2824.96} & \SI{6.82}{\percent} & \SI{48.93}{\percent} & \num{ 6.0} & \num{ 133.5} \\
          \multicolumn{18}{@{}l}{$B = 3$}\\
           20 &  \num{ 30} & \num{ 1.68} & \SI{57.09}{\percent} & \num{ 3.7} & \num{ 374.5}  &  \num{ 30} & \num{ 0.57} & \SI{28.76}{\percent} & \SI{15.65}{\percent} & \num{ 3.5} & \num{ 15.6}  &  \num{ 30} & \num{ 0.72} & \SI{28.46}{\percent} & \SI{16.55}{\percent} & \num{ 3.5} & \num{ 13.1} \\
           50 &  \num{ 21} & \num{1219.48} & \SI{89.27}{\percent} & \num{ 7.5} & \num{11698.7}  &  \num{ 30} & \num{ 180.85} & \SI{30.49}{\percent} & \SI{25.76}{\percent} & \num{ 6.3} & \num{ 225.0}  &  \num{ 29} & \num{ 169.80} & \SI{31.81}{\percent} & \SI{29.60}{\percent} & \num{ 6.3} & \num{ 195.4} \\
          100 &  \num{ 0} & \num{3600.00} & \SI{41.74}{\percent} & --- & ---  &  \num{ 1} & \num{3533.76} & \SI{5.03}{\percent} & \SI{22.44}{\percent} & \num{12.0} & \num{ 366.0}  &  \num{ 3} & \num{3453.67} & \SI{9.41}{\percent} & \SI{46.40}{\percent} & \num{ 9.0} & \num{ 215.3} \\
          \multicolumn{18}{@{}l}{$B = 4$}\\
           20 &  \num{ 30} & \num{ 3.59} & \SI{57.30}{\percent} & \num{ 4.9} & \num{ 1071.7}  &  \num{ 30} & \num{ 0.59} & \SI{26.99}{\percent} & \SI{11.21}{\percent} & \num{ 3.6} & \num{ 13.2}  &  \num{ 30} & \num{ 0.56} & \SI{31.99}{\percent} & \SI{16.11}{\percent} & \num{ 3.7} & \num{ 12.6} \\
           50 &  \num{ 8} & \num{2872.08} & \SI{92.19}{\percent} & \num{12.8} & \num{45939.2}  &  \num{ 27} & \num{ 228.29} & \SI{44.26}{\percent} & \SI{19.61}{\percent} & \num{ 6.9} & \num{ 286.0}  &  \num{ 27} & \num{ 261.47} & \SI{43.44}{\percent} & \SI{21.69}{\percent} & \num{ 7.4} & \num{ 246.6} \\
          100 &  \num{ 0} & \num{3600.00} & \SI{44.48}{\percent} & --- & ---  &  \num{ 1} & \num{3568.59} & \SI{6.95}{\percent} & \SI{17.52}{\percent} & \num{15.0} & \num{ 825.0}  &  \num{ 0} & \num{3600.00} & \SI{11.48}{\percent} & \SI{42.81}{\percent} & --- & --- \\
          \midrule
          \multicolumn{18}{@{}l}{chain length: 4}\\
          \multicolumn{18}{@{}l}{$B = 1$}\\
           20 &  \num{ 30} & \num{ 0.77} & \SI{24.06}{\percent} & \num{ 1.9} & \num{ 16.3}  &  \num{ 30} & \num{ 0.69} & \SI{15.61}{\percent} & \SI{19.42}{\percent} & \num{ 2.0} & \num{ 8.6}  &  \num{ 30} & \num{ 0.56} & \SI{22.35}{\percent} & \SI{17.07}{\percent} & \num{ 2.1} & \num{ 7.3} \\
           50 &  \num{ 30} & \num{ 209.70} & \SI{27.92}{\percent} & \num{ 3.2} & \num{ 59.2}  &  \num{ 30} & \num{ 229.91} & \SI{5.72}{\percent} & \SI{23.29}{\percent} & \num{ 3.4} & \num{ 27.5}  &  \num{ 30} & \num{ 46.88} & \SI{7.09}{\percent} & \SI{39.38}{\percent} & \num{ 2.9} & \num{ 22.4} \\
          100 &  \num{ 1} & \num{3545.41} & \SI{18.53}{\percent} & \num{ 4.0} & \num{ 162.0}  &  \num{ 2} & \num{3327.35} & \SI{9.59}{\percent} & \SI{11.08}{\percent} & \num{ 3.0} & \num{ 55.5}  &  \num{ 24} & \num{1399.71} & \SI{5.73}{\percent} & \SI{60.70}{\percent} & \num{ 2.9} & \num{ 42.4} \\
          \multicolumn{18}{@{}l}{$B = 2$}\\
           20 &  \num{ 30} & \num{ 1.34} & \SI{49.20}{\percent} & \num{ 3.1} & \num{ 117.3}  &  \num{ 30} & \num{ 1.13} & \SI{29.00}{\percent} & \SI{11.08}{\percent} & \num{ 2.9} & \num{ 16.4}  &  \num{ 30} & \num{ 0.89} & \SI{29.88}{\percent} & \SI{15.96}{\percent} & \num{ 3.1} & \num{ 13.8} \\
           50 &  \num{ 25} & \num{ 761.50} & \SI{62.48}{\percent} & \num{ 5.1} & \num{ 905.8}  &  \num{ 28} & \num{ 399.60} & \SI{11.82}{\percent} & \SI{33.90}{\percent} & \num{ 4.5} & \num{ 105.6}  &  \num{ 30} & \num{ 156.36} & \SI{13.13}{\percent} & \SI{43.83}{\percent} & \num{ 5.1} & \num{ 97.3} \\
          100 &  \num{ 0} & \num{3600.00} & \SI{18.82}{\percent} & --- & ---  &  \num{ 0} & \num{3600.00} & \SI{10.37}{\percent} & \SI{5.14}{\percent} & --- & ---  &  \num{ 7} & \num{2970.19} & \SI{6.55}{\percent} & \SI{52.23}{\percent} & \num{ 5.1} & \num{ 123.4} \\
          \multicolumn{18}{@{}l}{$B = 3$}\\
           20 &  \num{ 30} & \num{ 2.99} & \SI{56.37}{\percent} & \num{ 3.9} & \num{ 446.8}  &  \num{ 30} & \num{ 1.21} & \SI{24.97}{\percent} & \SI{12.50}{\percent} & \num{ 3.5} & \num{ 16.1}  &  \num{ 30} & \num{ 0.76} & \SI{25.09}{\percent} & \SI{13.27}{\percent} & \num{ 3.6} & \num{ 15.6} \\
           50 &  \num{ 12} & \num{2254.06} & \SI{78.10}{\percent} & \num{ 8.2} & \num{ 9819.8}  &  \num{ 24} & \num{ 686.29} & \SI{18.03}{\percent} & \SI{30.83}{\percent} & \num{ 6.2} & \num{ 197.5}  &  \num{ 29} & \num{ 295.14} & \SI{28.06}{\percent} & \SI{35.31}{\percent} & \num{ 6.3} & \num{ 199.8} \\
          100 &  \num{ 0} & \num{3600.00} & \SI{15.34}{\percent} & --- & ---  &  \num{ 0} & \num{3600.00} & \SI{9.87}{\percent} & \SI{1.76}{\percent} & --- & ---  &  \num{ 1} & \num{3575.11} & \SI{6.89}{\percent} & \SI{38.18}{\percent} & \num{ 9.0} & \num{ 307.0} \\
          \multicolumn{18}{@{}l}{$B = 4$}\\
           20 &  \num{ 30} & \num{ 5.43} & \SI{57.99}{\percent} & \num{ 5.2} & \num{ 1479.1}  &  \num{ 30} & \num{ 0.93} & \SI{23.59}{\percent} & \SI{9.04}{\percent} & \num{ 3.3} & \num{ 13.8}  &  \num{ 30} & \num{ 0.75} & \SI{32.61}{\percent} & \SI{10.70}{\percent} & \num{ 3.4} & \num{ 12.7} \\
           50 &  \num{ 3} & \num{3325.58} & \SI{82.22}{\percent} & \num{15.7} & \num{53241.3}  &  \num{ 21} & \num{ 779.12} & \SI{23.65}{\percent} & \SI{26.43}{\percent} & \num{ 6.8} & \num{ 249.0}  &  \num{ 27} & \num{ 370.75} & \SI{41.40}{\percent} & \SI{24.96}{\percent} & \num{ 7.3} & \num{ 279.9} \\
          100 &  \num{ 0} & \num{3600.00} & \SI{18.90}{\percent} & --- & ---  &  \num{ 0} & \num{3600.00} & \SI{5.48}{\percent} & \SI{6.40}{\percent} & --- & ---  &  \num{ 0} & \num{3600.00} & \SI{10.26}{\percent} & \SI{36.56}{\percent} & --- & --- \\
          \bottomrule
    \end{tabular*}
    \caption{Comparison of the different methods for maximum cycle length 4 and different chain lengths, for the Full Recourse policy.}
    \end{tiny}
  \end{table}
\end{landscape}
% }

% generated by paper_generate_table.sh 4 s 2
  \begin{landscape}
    \begin{table}[htbp]
      \begin{tiny}
       
        \label{tab:cyc4-policy2}
        \begin{tabular*}{\linewidth}{@{}l@{\;\;\extracolsep{\fill}}rrrrrrrrrrrr@{}}\toprule
           & \multicolumn{6}{c}{cut-CC} & \multicolumn{6}{c}{cut-PICEF}\\
          \cmidrule{2-7} \cmidrule{8-13}
          & & \multicolumn{3}{c}{time} & & & & \multicolumn{3}{c}{time} & & \\
          \cmidrule{3-5} \cmidrule{9-11}
          \#vertices & \#opt & total & stage 2 & stage 3 & \#att. & \#sub & \#opt & total & stage 2 & stage 3 & \#att. & \#sub \\
          \midrule
          \multicolumn{13}{@{}l}{chain length: 2}\\
          \multicolumn{13}{@{}l}{$B = 1$}\\
           20 &  \num{ 30} & \num{ 0.99} & \SI{31.88}{\percent} & \SI{8.01}{\percent} & \num{ 3.0} & \num{ 7.9}  &  \num{ 30} & \num{ 1.52} & \SI{16.23}{\percent} & \SI{12.21}{\percent} & \num{ 2.8} & \num{ 7.2} \\
           50 &  \num{ 25} & \num{ 342.60} & \SI{11.85}{\percent} & \SI{0.51}{\percent} & \num{ 7.7} & \num{ 35.0}  &  \num{ 23} & \num{ 425.01} & \SI{0.50}{\percent} & \SI{0.20}{\percent} & \num{ 5.7} & \num{ 19.5} \\
          100 &  \num{ 2} & \num{3481.47} & \SI{21.97}{\percent} & \SI{0.36}{\percent} & \num{ 6.5} & \num{ 27.5}  &  \num{ 3} & \num{3494.44} & \SI{0.32}{\percent} & \SI{0.11}{\percent} & \num{ 6.3} & \num{ 29.7} \\
          \multicolumn{13}{@{}l}{$B = 2$}\\
           20 &  \num{ 30} & \num{ 2.83} & \SI{35.18}{\percent} & \SI{6.30}{\percent} & \num{ 4.3} & \num{ 21.4}  &  \num{ 30} & \num{ 2.43} & \SI{20.22}{\percent} & \SI{8.33}{\percent} & \num{ 4.4} & \num{ 21.7} \\
           50 &  \num{ 16} & \num{ 978.54} & \SI{14.67}{\percent} & \SI{0.83}{\percent} & \num{10.1} & \num{ 104.1}  &  \num{ 14} & \num{1235.50} & \SI{1.15}{\percent} & \SI{0.34}{\percent} & \num{ 9.1} & \num{ 89.9} \\
          100 &  \num{ 0} & \num{3600.00} & \SI{15.30}{\percent} & \SI{0.31}{\percent} & --- & ---  &  \num{ 0} & \num{3600.00} & \SI{0.34}{\percent} & \SI{0.10}{\percent} & --- & --- \\
          \multicolumn{13}{@{}l}{$B = 3$}\\
           20 &  \num{ 30} & \num{ 1.89} & \SI{48.14}{\percent} & \SI{6.34}{\percent} & \num{ 4.1} & \num{ 20.9}  &  \num{ 30} & \num{ 2.30} & \SI{19.62}{\percent} & \SI{16.53}{\percent} & \num{ 4.3} & \num{ 30.6} \\
           50 &  \num{ 9} & \num{1983.83} & \SI{24.70}{\percent} & \SI{0.84}{\percent} & \num{10.9} & \num{ 226.3}  &  \num{ 11} & \num{1669.92} & \SI{5.09}{\percent} & \SI{0.70}{\percent} & \num{10.4} & \num{ 274.7} \\
          100 &  \num{ 0} & \num{3600.00} & \SI{15.24}{\percent} & \SI{0.21}{\percent} & --- & ---  &  \num{ 0} & \num{3600.00} & \SI{0.29}{\percent} & \SI{0.07}{\percent} & --- & --- \\
          \multicolumn{13}{@{}l}{$B = 4$}\\
           20 &  \num{ 30} & \num{ 1.46} & \SI{42.06}{\percent} & \SI{5.83}{\percent} & \num{ 4.1} & \num{ 22.2}  &  \num{ 30} & \num{ 1.43} & \SI{21.99}{\percent} & \SI{9.85}{\percent} & \num{ 3.8} & \num{ 20.2} \\
           50 &  \num{ 7} & \num{2271.96} & \SI{40.37}{\percent} & \SI{0.91}{\percent} & \num{14.1} & \num{ 304.0}  &  \num{ 8} & \num{2039.06} & \SI{6.18}{\percent} & \SI{0.60}{\percent} & \num{13.4} & \num{ 339.6} \\
          100 &  \num{ 0} & \num{3600.00} & \SI{15.20}{\percent} & \SI{0.12}{\percent} & --- & ---  &  \num{ 0} & \num{3600.00} & \SI{0.33}{\percent} & \SI{0.10}{\percent} & --- & --- \\
          \midrule
          \multicolumn{13}{@{}l}{chain length: 3}\\
          \multicolumn{13}{@{}l}{$B = 1$}\\
           20 &  \num{ 30} & \num{ 2.24} & \SI{28.40}{\percent} & \SI{8.36}{\percent} & \num{ 3.3} & \num{ 9.9}  &  \num{ 30} & \num{ 1.90} & \SI{21.59}{\percent} & \SI{6.46}{\percent} & \num{ 2.8} & \num{ 7.8} \\
           50 &  \num{ 25} & \num{ 525.96} & \SI{14.69}{\percent} & \SI{0.48}{\percent} & \num{ 7.1} & \num{ 30.0}  &  \num{ 20} & \num{ 604.26} & \SI{0.34}{\percent} & \SI{0.13}{\percent} & \num{ 5.1} & \num{ 20.1} \\
          100 &  \num{ 0} & \num{3600.00} & \SI{32.01}{\percent} & \SI{0.38}{\percent} & --- & ---  &  \num{ 2} & \num{3444.94} & \SI{0.29}{\percent} & \SI{0.08}{\percent} & \num{ 4.5} & \num{ 19.0} \\
          \multicolumn{13}{@{}l}{$B = 2$}\\
           20 &  \num{ 30} & \num{ 4.10} & \SI{33.52}{\percent} & \SI{9.66}{\percent} & \num{ 4.9} & \num{ 21.5}  &  \num{ 30} & \num{ 4.54} & \SI{17.08}{\percent} & \SI{4.25}{\percent} & \num{ 4.7} & \num{ 19.9} \\
           50 &  \num{ 12} & \num{1738.12} & \SI{16.03}{\percent} & \SI{0.69}{\percent} & \num{10.2} & \num{ 103.0}  &  \num{ 11} & \num{2003.66} & \SI{0.37}{\percent} & \SI{0.11}{\percent} & \num{ 9.1} & \num{ 70.6} \\
          100 &  \num{ 0} & \num{3600.00} & \SI{19.59}{\percent} & \SI{0.20}{\percent} & --- & ---  &  \num{ 0} & \num{3600.00} & \SI{0.23}{\percent} & \SI{0.45}{\percent} & --- & --- \\
          \multicolumn{13}{@{}l}{$B = 3$}\\
           20 &  \num{ 30} & \num{ 3.42} & \SI{43.68}{\percent} & \SI{15.84}{\percent} & \num{ 4.1} & \num{ 24.5}  &  \num{ 30} & \num{ 3.61} & \SI{24.62}{\percent} & \SI{8.44}{\percent} & \num{ 3.8} & \num{ 25.4} \\
           50 &  \num{ 7} & \num{2464.55} & \SI{26.10}{\percent} & \SI{0.71}{\percent} & \num{11.1} & \num{ 183.3}  &  \num{ 7} & \num{2326.23} & \SI{1.61}{\percent} & \SI{0.30}{\percent} & \num{10.1} & \num{ 197.4} \\
          100 &  \num{ 0} & \num{3600.00} & \SI{21.03}{\percent} & \SI{0.16}{\percent} & --- & ---  &  \num{ 0} & \num{3600.00} & \SI{0.22}{\percent} & \SI{0.03}{\percent} & --- & --- \\
          \multicolumn{13}{@{}l}{$B = 4$}\\
           20 &  \num{ 30} & \num{ 2.27} & \SI{41.98}{\percent} & \SI{10.27}{\percent} & \num{ 4.1} & \num{ 21.7}  &  \num{ 30} & \num{ 2.83} & \SI{28.60}{\percent} & \SI{11.29}{\percent} & \num{ 4.0} & \num{ 20.7} \\
           50 &  \num{ 6} & \num{2507.32} & \SI{35.23}{\percent} & \SI{0.67}{\percent} & \num{10.8} & \num{ 156.8}  &  \num{ 6} & \num{2278.47} & \SI{1.28}{\percent} & \SI{0.43}{\percent} & \num{10.7} & \num{ 179.0} \\
          100 &  \num{ 0} & \num{3600.00} & \SI{20.95}{\percent} & \SI{0.14}{\percent} & --- & ---  &  \num{ 0} & \num{3600.00} & \SI{0.24}{\percent} & \SI{0.04}{\percent} & --- & --- \\
          \midrule
          \multicolumn{13}{@{}l}{chain length: 4}\\
          \multicolumn{13}{@{}l}{$B = 1$}\\
           20 &  \num{ 30} & \num{ 4.87} & \SI{31.53}{\percent} & \SI{4.27}{\percent} & \num{ 3.2} & \num{ 9.1}  &  \num{ 30} & \num{ 2.66} & \SI{18.13}{\percent} & \SI{6.05}{\percent} & \num{ 2.9} & \num{ 7.7} \\
           50 &  \num{ 13} & \num{1749.18} & \SI{19.61}{\percent} & \SI{0.62}{\percent} & \num{ 6.9} & \num{ 23.6}  &  \num{ 18} & \num{1105.75} & \SI{0.21}{\percent} & \SI{0.09}{\percent} & \num{ 5.6} & \num{ 20.4} \\
          100 &  \num{ 0} & \num{3600.00} & \SI{25.56}{\percent} & \SI{0.18}{\percent} & --- & ---  &  \num{ 1} & \num{3554.88} & \SI{0.24}{\percent} & \SI{0.05}{\percent} & \num{ 5.0} & \num{ 37.0} \\
          \multicolumn{13}{@{}l}{$B = 2$}\\
           20 &  \num{ 30} & \num{ 8.64} & \SI{35.55}{\percent} & \SI{9.41}{\percent} & \num{ 5.2} & \num{ 25.0}  &  \num{ 30} & \num{ 5.49} & \SI{18.03}{\percent} & \SI{8.23}{\percent} & \num{ 4.6} & \num{ 24.2} \\
           50 &  \num{ 3} & \num{3134.54} & \SI{17.63}{\percent} & \SI{0.55}{\percent} & \num{ 9.0} & \num{ 49.7}  &  \num{ 7} & \num{2561.70} & \SI{0.30}{\percent} & \SI{0.14}{\percent} & \num{ 8.0} & \num{ 82.9} \\
          100 &  \num{ 0} & \num{3600.00} & \SI{21.42}{\percent} & \SI{0.06}{\percent} & --- & ---  &  \num{ 0} & \num{3600.00} & \SI{0.28}{\percent} & \SI{0.10}{\percent} & --- & --- \\
          \multicolumn{13}{@{}l}{$B = 3$}\\
           20 &  \num{ 30} & \num{ 6.97} & \SI{43.25}{\percent} & \SI{9.94}{\percent} & \num{ 4.4} & \num{ 30.4}  &  \num{ 30} & \num{ 5.01} & \SI{23.38}{\percent} & \SI{15.73}{\percent} & \num{ 4.6} & \num{ 36.2} \\
           50 &  \num{ 2} & \num{3431.51} & \SI{22.22}{\percent} & \SI{0.39}{\percent} & \num{11.5} & \num{ 113.5}  &  \num{ 6} & \num{2337.49} & \SI{0.46}{\percent} & \SI{0.24}{\percent} & \num{ 8.0} & \num{ 79.3} \\
          100 &  \num{ 0} & \num{3600.00} & \SI{22.95}{\percent} & \SI{0.06}{\percent} & --- & ---  &  \num{ 0} & \num{3600.00} & \SI{0.18}{\percent} & \SI{0.03}{\percent} & --- & --- \\
          \multicolumn{13}{@{}l}{$B = 4$}\\
           20 &  \num{ 30} & \num{ 3.77} & \SI{44.70}{\percent} & \SI{5.09}{\percent} & \num{ 4.0} & \num{ 21.3}  &  \num{ 30} & \num{ 2.48} & \SI{23.50}{\percent} & \SI{9.39}{\percent} & \num{ 3.9} & \num{ 19.4} \\
           50 &  \num{ 1} & \num{3500.53} & \SI{32.45}{\percent} & \SI{0.40}{\percent} & \num{11.0} & \num{ 142.0}  &  \num{ 5} & \num{2530.14} & \SI{2.42}{\percent} & \SI{0.76}{\percent} & \num{12.0} & \num{ 226.0} \\
          100 &  \num{ 0} & \num{3600.00} & \SI{24.38}{\percent} & \SI{0.06}{\percent} & --- & ---  &  \num{ 0} & \num{3600.00} & \SI{0.19}{\percent} & \SI{0.03}{\percent} & --- & --- \\
          \bottomrule
      \end{tabular*}
      \caption{Comparison of the different methods for maximum cycle length 4 and different chain lengths, for the FSE policy.}
    \end{tiny}
  \end{table}
\end{landscape}

\begin{table}[htbp]
  \begin{tiny}
    \label{tab:lifting-cyc4}
    \begin{tabular*}{\linewidth}{@{}l@{\;\;\extracolsep{\fill}}rrrrrrrrrrrr@{}}\toprule
       & \multicolumn{6}{c}{with lifting} & \multicolumn{6}{c}{without lifting} \\
      \cmidrule{2-7} \cmidrule{8-13}
       {\bfseries cut-CC: FR} & & \multicolumn{3}{c}{time} & & & & \multicolumn{3}{c}{time} & & \\
      \cmidrule{3-5} \cmidrule{9-11}
       & \#opt & total & stage 2 & stage 3 & \#att. & \#sub & \#opt & total & stage 2 & stage 3 & \#att. & \#sub \\
      \midrule
      \multicolumn{13}{@{}l}{chain length: 2}\\
      $B = 1$ &  \num{ 28} & \num{ 591.86} & \SI{5.19}{\percent} & \SI{24.36}{\percent} & \num{ 3.4} & \num{ 25.0}  &  \num{ 26} & \num{ 893.61} & \SI{5.14}{\percent} & \SI{37.28}{\percent} & \num{ 3.5} & \num{ 45.3} \\
      $B = 2$ &  \num{ 23} & \num{1052.83} & \SI{5.88}{\percent} & \SI{25.35}{\percent} & \num{ 4.6} & \num{ 64.8}  &  \num{ 15} & \num{1904.66} & \SI{7.79}{\percent} & \SI{41.12}{\percent} & \num{ 4.9} & \num{ 147.7} \\
      $B = 3$ &  \num{ 15} & \num{1873.45} & \SI{5.75}{\percent} & \SI{20.20}{\percent} & \num{ 5.3} & \num{ 93.6}  &  \num{ 8} & \num{2972.21} & \SI{13.30}{\percent} & \SI{30.27}{\percent} & \num{ 5.9} & \num{ 333.6} \\
      $B = 4$ &  \num{ 17} & \num{1883.98} & \SI{7.87}{\percent} & \SI{21.47}{\percent} & \num{ 6.4} & \num{ 127.2}  &  \num{ 3} & \num{3410.61} & \SI{19.56}{\percent} & \SI{24.22}{\percent} & \num{ 9.0} & \num{ 534.7} \\
      \midrule
      \multicolumn{13}{@{}l}{chain length: 3}\\
      $B = 1$ &  \num{ 24} & \num{1182.61} & \SI{6.08}{\percent} & \SI{28.47}{\percent} & \num{ 3.0} & \num{ 27.2}  &  \num{ 21} & \num{1708.52} & \SI{6.25}{\percent} & \SI{44.78}{\percent} & \num{ 3.0} & \num{ 45.8} \\
      $B = 2$ &  \num{ 14} & \num{2350.91} & \SI{5.12}{\percent} & \SI{27.02}{\percent} & \num{ 4.5} & \num{ 68.4}  &  \num{ 7} & \num{3107.82} & \SI{5.38}{\percent} & \SI{29.00}{\percent} & \num{ 5.1} & \num{ 113.4} \\
      $B = 3$ &  \num{ 8} & \num{2934.43} & \SI{4.39}{\percent} & \SI{15.07}{\percent} & \num{ 5.4} & \num{ 81.6}  &  \num{ 1} & \num{3533.76} & \SI{5.03}{\percent} & \SI{22.44}{\percent} & \num{12.0} & \num{ 366.0} \\
      $B = 4$ &  \num{ 8} & \num{3092.19} & \SI{4.83}{\percent} & \SI{14.73}{\percent} & \num{ 6.4} & \num{ 120.6}  &  \num{ 1} & \num{3568.59} & \SI{6.95}{\percent} & \SI{17.52}{\percent} & \num{15.0} & \num{ 825.0} \\
      \midrule
      \multicolumn{13}{@{}l}{chain length: 4}\\
      $B = 1$ &  \num{ 3} & \num{3234.14} & \SI{10.12}{\percent} & \SI{8.90}{\percent} & \num{ 3.7} & \num{ 31.7}  &  \num{ 2} & \num{3327.35} & \SI{9.59}{\percent} & \SI{11.08}{\percent} & \num{ 3.0} & \num{ 55.5} \\
      $B = 2$ &  \num{ 3} & \num{3488.09} & \SI{9.02}{\percent} & \SI{8.46}{\percent} & \num{ 4.7} & \num{ 82.7}  &  \num{ 0} & \num{3600.00} & \SI{10.37}{\percent} & \SI{5.14}{\percent} & --- & --- \\
      $B = 3$ &  \num{ 1} & \num{3541.42} & \SI{8.00}{\percent} & \SI{3.20}{\percent} & \num{ 7.0} & \num{ 122.0}  &  \num{ 0} & \num{3600.00} & \SI{9.87}{\percent} & \SI{1.76}{\percent} & --- & --- \\
      $B = 4$ &  \num{ 0} & \num{3600.00} & \SI{7.04}{\percent} & \SI{1.37}{\percent} & --- & ---  &  \num{ 0} & \num{3600.00} & \SI{5.48}{\percent} & \SI{6.40}{\percent} & --- & --- \\
      \midrule
      {\bfseries cut-CC: FSE} & & \multicolumn{3}{c}{time} & & & & \multicolumn{3}{c}{time} & & \\
      \cmidrule{3-5} \cmidrule{9-11}
       & \#opt & total & stage 2 & stage 3 & \#att. & \#sub & \#opt & total & stage 2 & stage 3 & \#att. & \#sub \\
      \midrule
     \multicolumn{13}{@{}l}{chain length: 2}\\
      $B = 1$ &  \num{ 3} & \num{3394.74} & \SI{21.92}{\percent} & \SI{0.37}{\percent} & \num{ 5.7} & \num{ 24.0}  &  \num{ 2} & \num{3481.47} & \SI{21.97}{\percent} & \SI{0.36}{\percent} & \num{ 6.5} & \num{ 27.5} \\
      $B = 2$ &  \num{ 0} & \num{3600.00} & \SI{13.46}{\percent} & \SI{0.27}{\percent} & --- & ---  &  \num{ 0} & \num{3600.00} & \SI{15.30}{\percent} & \SI{0.31}{\percent} & --- & --- \\
      $B = 3$ &  \num{ 0} & \num{3600.00} & \SI{13.77}{\percent} & \SI{0.18}{\percent} & --- & ---  &  \num{ 0} & \num{3600.00} & \SI{15.24}{\percent} & \SI{0.21}{\percent} & --- & --- \\
      $B = 4$ &  \num{ 0} & \num{3600.00} & \SI{13.49}{\percent} & \SI{0.11}{\percent} & --- & ---  &  \num{ 0} & \num{3600.00} & \SI{15.20}{\percent} & \SI{0.12}{\percent} & --- & --- \\
      \midrule
      \multicolumn{13}{@{}l}{chain length: 3}\\
      $B = 1$ &  \num{ 0} & \num{3600.00} & \SI{32.83}{\percent} & \SI{0.45}{\percent} & --- & ---  &  \num{ 0} & \num{3600.00} & \SI{32.01}{\percent} & \SI{0.38}{\percent} & --- & --- \\
      $B = 2$ &  \num{ 0} & \num{3600.00} & \SI{17.82}{\percent} & \SI{0.20}{\percent} & --- & ---  &  \num{ 0} & \num{3600.00} & \SI{19.59}{\percent} & \SI{0.20}{\percent} & --- & --- \\
      $B = 3$ &  \num{ 0} & \num{3600.00} & \SI{19.97}{\percent} & \SI{0.14}{\percent} & --- & ---  &  \num{ 0} & \num{3600.00} & \SI{21.03}{\percent} & \SI{0.16}{\percent} & --- & --- \\
      $B = 4$ &  \num{ 0} & \num{3600.00} & \SI{24.40}{\percent} & \SI{0.18}{\percent} & --- & ---  &  \num{ 0} & \num{3600.00} & \SI{20.95}{\percent} & \SI{0.14}{\percent} & --- & --- \\
      \midrule
      \multicolumn{13}{@{}l}{chain length: 4}\\
      $B = 1$ &  \num{ 0} & \num{3600.00} & \SI{27.36}{\percent} & \SI{0.17}{\percent} & --- & ---  &  \num{ 0} & \num{3600.00} & \SI{25.56}{\percent} & \SI{0.18}{\percent} & --- & --- \\
      $B = 2$ &  \num{ 0} & \num{3600.00} & \SI{21.07}{\percent} & \SI{0.05}{\percent} & --- & ---  &  \num{ 0} & \num{3600.00} & \SI{21.42}{\percent} & \SI{0.06}{\percent} & --- & --- \\
      $B = 3$ &  \num{ 0} & \num{3600.00} & \SI{23.64}{\percent} & \SI{0.06}{\percent} & --- & ---  &  \num{ 0} & \num{3600.00} & \SI{22.95}{\percent} & \SI{0.06}{\percent} & --- & --- \\
      $B = 4$ &  \num{ 0} & \num{3600.00} & \SI{24.25}{\percent} & \SI{0.06}{\percent} & --- & ---  &  \num{ 0} & \num{3600.00} & \SI{24.38}{\percent} & \SI{0.06}{\percent} & --- & --- \\
      \midrule
      {\bfseries cut-PICEF: FR} & & \multicolumn{3}{c}{time} & & & & \multicolumn{3}{c}{time} & & \\
      \cmidrule{3-5} \cmidrule{9-11}
       & \#opt & total & stage 2 & stage 3 & \#att. & \#sub & \#opt & total & stage 2 & stage 3 & \#att. & \#sub \\
      \midrule
      \multicolumn{13}{@{}l}{chain length: 2}\\
      $B = 1$ &  \num{ 26} & \num{ 771.57} & \SI{7.97}{\percent} & \SI{35.34}{\percent} & \num{ 3.3} & \num{ 22.6}  &  \num{ 26} & \num{ 918.91} & \SI{5.62}{\percent} & \SI{43.40}{\percent} & \num{ 3.5} & \num{ 42.1} \\
      $B = 2$ &  \num{ 22} & \num{1284.22} & \SI{6.97}{\percent} & \SI{43.76}{\percent} & \num{ 4.5} & \num{ 50.4}  &  \num{ 15} & \num{2154.89} & \SI{8.62}{\percent} & \SI{44.55}{\percent} & \num{ 5.3} & \num{ 142.7} \\
      $B = 3$ &  \num{ 12} & \num{2539.34} & \SI{6.39}{\percent} & \SI{33.10}{\percent} & \num{ 6.1} & \num{ 62.3}  &  \num{ 6} & \num{3254.15} & \SI{11.60}{\percent} & \SI{38.67}{\percent} & \num{ 7.8} & \num{ 312.8} \\
      $B = 4$ &  \num{ 13} & \num{2464.68} & \SI{7.09}{\percent} & \SI{31.69}{\percent} & \num{ 6.5} & \num{ 81.8}  &  \num{ 1} & \num{3572.13} & \SI{16.26}{\percent} & \SI{25.13}{\percent} & \num{17.0} & \num{ 872.0} \\
      \midrule
      \multicolumn{13}{@{}l}{chain length: 3}\\
      $B = 1$ &  \num{ 25} & \num{ 785.72} & \SI{9.38}{\percent} & \SI{41.37}{\percent} & \num{ 3.1} & \num{ 23.0}  &  \num{ 27} & \num{1142.96} & \SI{6.47}{\percent} & \SI{52.07}{\percent} & \num{ 3.2} & \num{ 38.7} \\
      $B = 2$ &  \num{ 21} & \num{1847.92} & \SI{6.14}{\percent} & \SI{34.68}{\percent} & \num{ 5.3} & \num{ 50.9}  &  \num{ 10} & \num{2824.96} & \SI{6.82}{\percent} & \SI{48.93}{\percent} & \num{ 6.0} & \num{ 133.5} \\
      $B = 3$ &  \num{ 11} & \num{2722.26} & \SI{6.79}{\percent} & \SI{36.65}{\percent} & \num{ 6.5} & \num{ 60.5}  &  \num{ 3} & \num{3453.67} & \SI{9.41}{\percent} & \SI{46.40}{\percent} & \num{ 9.0} & \num{ 215.3} \\
      $B = 4$ &  \num{ 11} & \num{2741.17} & \SI{6.12}{\percent} & \SI{39.37}{\percent} & \num{ 6.2} & \num{ 86.4}  &  \num{ 0} & \num{3600.00} & \SI{11.48}{\percent} & \SI{42.81}{\percent} & --- & --- \\
      \midrule
      \multicolumn{13}{@{}l}{chain length: 4}\\
      $B = 1$ &  \num{ 26} & \num{ 951.26} & \SI{7.48}{\percent} & \SI{42.90}{\percent} & \num{ 2.6} & \num{ 23.2}  &  \num{ 24} & \num{1399.71} & \SI{5.73}{\percent} & \SI{60.70}{\percent} & \num{ 2.9} & \num{ 42.4} \\
      $B = 2$ &  \num{ 22} & \num{1770.23} & \SI{6.38}{\percent} & \SI{44.15}{\percent} & \num{ 4.4} & \num{ 50.1}  &  \num{ 7} & \num{2970.19} & \SI{6.55}{\percent} & \SI{52.23}{\percent} & \num{ 5.1} & \num{ 123.4} \\
      $B = 3$ &  \num{ 8} & \num{2954.78} & \SI{6.64}{\percent} & \SI{37.47}{\percent} & \num{ 6.1} & \num{ 60.9}  &  \num{ 1} & \num{3575.11} & \SI{6.89}{\percent} & \SI{38.18}{\percent} & \num{ 9.0} & \num{ 307.0} \\
      $B = 4$ &  \num{ 10} & \num{2997.36} & \SI{6.43}{\percent} & \SI{32.87}{\percent} & \num{ 6.8} & \num{ 93.5}  &  \num{ 0} & \num{3600.00} & \SI{10.26}{\percent} & \SI{36.56}{\percent} & --- & --- \\
       \midrule
      {\bfseries cut-PICEF: FSE} & & \multicolumn{3}{c}{time} & & & & \multicolumn{3}{c}{time} & & \\
      \cmidrule{3-5} \cmidrule{9-11}
       & \#opt & total & stage 2 & stage 3 & \#att. & \#sub & \#opt & total & stage 2 & stage 3 & \#att. & \#sub \\
      \midrule
     \multicolumn{13}{@{}l}{chain length: 2}\\
      $B = 1$ &  \num{ 2} & \num{3202.94} & \SI{0.35}{\percent} & \SI{0.11}{\percent} & \num{ 4.0} & \num{ 14.5}  &  \num{ 3} & \num{3494.44} & \SI{0.32}{\percent} & \SI{0.11}{\percent} & \num{ 6.3} & \num{ 29.7} \\
      $B = 2$ &  \num{ 0} & \num{3600.00} & \SI{0.36}{\percent} & \SI{0.13}{\percent} & --- & ---  &  \num{ 0} & \num{3600.00} & \SI{0.34}{\percent} & \SI{0.10}{\percent} & --- & --- \\
      $B = 3$ &  \num{ 0} & \num{3600.00} & \SI{0.27}{\percent} & \SI{0.07}{\percent} & --- & ---  &  \num{ 0} & \num{3600.00} & \SI{0.29}{\percent} & \SI{0.07}{\percent} & --- & --- \\
      $B = 4$ &  \num{ 0} & \num{3600.00} & \SI{0.29}{\percent} & \SI{0.07}{\percent} & --- & ---  &  \num{ 0} & \num{3600.00} & \SI{0.33}{\percent} & \SI{0.10}{\percent} & --- & --- \\
      \midrule
      \multicolumn{13}{@{}l}{chain length: 3}\\
      $B = 1$ &  \num{ 1} & \num{3477.97} & \SI{0.30}{\percent} & \SI{0.09}{\percent} & \num{ 4.0} & \num{ 22.0}  &  \num{ 2} & \num{3444.94} & \SI{0.29}{\percent} & \SI{0.08}{\percent} & \num{ 4.5} & \num{ 19.0} \\
      $B = 2$ &  \num{ 0} & \num{3600.00} & \SI{0.21}{\percent} & \SI{0.03}{\percent} & --- & ---  &  \num{ 0} & \num{3600.00} & \SI{0.23}{\percent} & \SI{0.45}{\percent} & --- & --- \\
      $B = 3$ &  \num{ 0} & \num{3600.00} & \SI{0.23}{\percent} & \SI{0.05}{\percent} & --- & ---  &  \num{ 0} & \num{3600.00} & \SI{0.22}{\percent} & \SI{0.03}{\percent} & --- & --- \\
      $B = 4$ &  \num{ 0} & \num{3600.00} & \SI{0.23}{\percent} & \SI{0.05}{\percent} & --- & ---  &  \num{ 0} & \num{3600.00} & \SI{0.24}{\percent} & \SI{0.04}{\percent} & --- & --- \\
      \midrule
      \multicolumn{13}{@{}l}{chain length: 4}\\
      $B = 1$ &  \num{ 1} & \num{3570.22} & \SI{0.26}{\percent} & \SI{0.07}{\percent} & \num{ 5.0} & \num{ 32.0}  &  \num{ 1} & \num{3554.88} & \SI{0.24}{\percent} & \SI{0.05}{\percent} & \num{ 5.0} & \num{ 37.0} \\
      $B = 2$ &  \num{ 0} & \num{3600.00} & \SI{0.29}{\percent} & \SI{0.15}{\percent} & --- & ---  &  \num{ 0} & \num{3600.00} & \SI{0.28}{\percent} & \SI{0.10}{\percent} & --- & --- \\
      $B = 3$ &  \num{ 0} & \num{3600.00} & \SI{0.21}{\percent} & \SI{0.04}{\percent} & --- & ---  &  \num{ 0} & \num{3600.00} & \SI{0.18}{\percent} & \SI{0.03}{\percent} & --- & --- \\
      $B = 4$ &  \num{ 0} & \num{3600.00} & \SI{0.28}{\percent} & \SI{0.07}{\percent} & --- & ---  &  \num{ 0} & \num{3600.00} & \SI{0.19}{\percent} & \SI{0.03}{\percent} & --- & --- \\
      \bottomrule
    \end{tabular*}
    \caption{Impact of cut lifting restricted to instances with 100 pairs / NDDs, for cut-CC and cut-PICEF, given maximum cycle length $K = 4$ and different chain lengths, for Full Recourse (FR) and FSE}
  \end{tiny}
\end{table}

% generated by paper_generate_aggr_lifting_table.sh 4 s c 1
\begin{table}[htbp]
  \begin{scriptsize}
    \begin{tabular*}{\textwidth}{@{}l@{\;\;\extracolsep{\fill}}rrrrrrrr@{}}\toprule
      {\bfseries Cut-CC} & \multicolumn{4}{c}{with lifting} & \multicolumn{4}{c}{without lifting}\\
      \cmidrule{2-5} \cmidrule{6-9}
      \#vertices & \#opt & time & \#att. & \#sub & \#opt & time & \#att. & \#sub \\
      \midrule
      \multicolumn{9}{@{}l}{chain length: 2}\\
       20 &  120 & 0.3 & 2.8 & 8.8 &  120 & 0.4 & 2.9 & 13.0\\
       50 &  120 & 33.4 & 5.3 & 45.0 &  118 & 78.5 & 5.5 & 151.3\\
      100 &  83 & 1350.5 & 4.9 & 77.7 &  52 & 2295.3 & 5.8 & 265.3\\
      \midrule
      \multicolumn{9}{@{}l}{chain length: 3}\\
       20 &  120 & 0.4 & 2.9 & 9.9 &  120 & 0.5 & 3.0 & 13.4\\
       50 &  120 & 71.2 & 5.2 & 51.9 &  117 & 136.9 & 5.3 & 162.4\\
      100 &  54 & 2390.0 & 4.8 & 74.5 &  30 & 2979.7 & 8.8 & 337.6\\
      \midrule
      \multicolumn{9}{@{}l}{chain length: 4}\\
       20 &  120 & 0.9 & 2.9 & 9.5 &  120 & 1.0 & 2.9 & 13.7\\
       50 &  115 & 325.7 & 5.0 & 48.4 &  103 & 523.7 & 5.2 & 144.9\\
      100 &  7 & 3465.9 & 3.8 & 59.1 &  2 & 3531.8 & 0.8 & 13.9\\
      \midrule
       {\bfseries Cut-PICEF} & \multicolumn{4}{c}{with lifting} & \multicolumn{4}{c}{without lifting}\\
      \cmidrule{2-5} \cmidrule{6-9}
      \#vertices & \#opt & time & \#att. & \#sub & \#opt & time & \#att. & \#sub \\
      \midrule
      \multicolumn{9}{@{}l}{chain length: 2}\\
       20 &  120 & 0.3 & 2.8 & 8.8 &  120 & 0.4 & 2.9 & 12.5\\
       50 &  120 & 43.0 & 5.3 & 41.3 &  117 & 99.1 & 5.5 & 142.2\\
      100 &  73 & 1765.0 & 5.1 & 54.3 &  48 & 2475.0 & 8.4 & 342.4\\
      \midrule
      \multicolumn{9}{@{}l}{chain length: 3}\\
       20 &  120 & 0.6 & 2.9 & 8.7 &  120 & 0.6 & 3.0 & 11.8\\
       50 &  120 & 78.0 & 5.2 & 42.9 &  116 & 140.7 & 5.5 & 138.7\\
      100 &  68 & 2024.3 & 5.3 & 55.2 &  40 & 2755.4 & 4.6 & 96.9\\
      \midrule
      \multicolumn{9}{@{}l}{chain length: 4}\\
       20 &  120 & 0.8 & 3.0 & 8.9 &  120 & 0.7 & 3.0 & 12.3\\
       50 &  120 & 120.3 & 5.3 & 45.8 &  116 & 217.3 & 5.4 & 149.8\\
      100 &  66 & 2168.4 & 5.0 & 56.9 &  32 & 2886.3 & 4.3 & 118.2\\
      \bottomrule
    \end{tabular*}
    \caption{Comparison of the cut-CC method with and without lifting for maximum cycle length 4 and different chain lengths, for the Full Recourse policy.}
    \label{tab:aggrlifting_cyc4-method1-policy1}
  \end{scriptsize}
\end{table}

%%% Local Variables:
%%% mode: latex
%%% TeX-master: "ManuscriptMay2022"
%%% End:

\end{document}